\theoremstyle{plain} 
 \newtheorem{thm}{Theorem}[section]
 \newtheorem{lem}[thm]{Lemma}
 \newtheorem{cor}[thm]{Corollary}
 \newtheorem{prop}[thm]{Proposition}
 \newtheorem{claim}[thm]{Claim}
\theoremstyle{definition}
  \newtheorem{defn}[thm]{Definition}
\theoremstyle{remark}
  \newtheorem{rem}[thm]{Remark}
\newcommand{\stab}{{\rm Stab}}
\renewcommand{\mod}{{\rm Mod}}
\renewcommand{\pmod}{{\rm PMod}}
\newcommand{\aut}{{\rm Aut}}
\newcommand{\isom}{{\rm Isom}}
\renewcommand{\stab}{{\rm Stab}}
\newcommand{\cal}{\mathcal}
\newcommand{\ci}[2]{\cite[#1]{#2}}
\renewcommand{\c}{\curvearrowright}
\begin{document}

\title[Examples of amalgamated free products]{Examples of amalgamated free products and coupling rigidity}
\author{Yoshikata Kida}
\address{Department of Mathematics, Kyoto University, 606-8502 Kyoto, Japan}
\email{kida@math.kyoto-u.ac.jp}
\date{July 8, 2010, revised on October 5, 2011}
\subjclass[2010]{20E06, 20E08, 20F38, 37A20}
\keywords{Measure equivalence, orbit equivalence, amalgamated free products, mapping class groups}

\begin{abstract}
We present amalgamated free products satisfying coupling rigidity with respect to the automorphism group of the associated Bass-Serre tree. 
As an application, we obtain orbit equivalence rigidity for amalgamated free products of mapping class groups.
\end{abstract}

\maketitle


\section{Introduction}

Measure equivalence is an equivalence relation between discrete countable groups and is defined so that two such groups are equivalent if there exists a measure space, called a coupling, equipped with certain measure-preserving actions of those groups (see Definition \ref{defn-me}). 
This is closely related to orbit equivalence between measure-preserving actions of discrete countable groups on probability measure spaces. 
We refer to \cite{furman-survey}, \cite{gab}, \cite{popa}, \cite{shalom-survey} and \cite{vaes} for recent developments and related topics on measure and orbit equivalence.

In \cite{kida-ama}, given two discrete countable groups $\Gamma_1$, $\Gamma_2$ satisfying rigidity in the sense of measure equivalence, we present a construction of an amalgamated free product $\Gamma =\Gamma_1\ast_A\Gamma_2$ satisfying rigidity of the same kind. 
The first step for the proof of this result is to show the coupling rigidity of $\Gamma$ with respect to the automorphism group of the Bass-Serre tree $T$ associated to the decomposition of $\Gamma$, which is proved on the assumption that $\Gamma_1$ and $\Gamma_2$ satisfy property (T). 
As a consequence, for any discrete countable group $\Lambda$ that is measure equivalent to $\Gamma$, we can find a simplicial action of $\Lambda$ on $T$ and obtain information on the structure of $\Lambda$ through the Bass-Serre theory. 
The purpose of the present paper is to establish coupling rigidity in the case where $\Gamma_1$ and $\Gamma_2$ do not necessarily satisfy property (T) (see Theorem \ref{thm-coup-rigid} for a precise statement). 
We apply it to proving rigidity results for amalgamated free products of mapping class groups. 
In this introduction, we state a result on orbit equivalence rigidity for such a group and give examples.

Let us introduce terminology to state Theorem \ref{thm-oer}. 
We mean by a discrete group a discrete and countable group. 
A Borel action of a discrete group $\Gamma$ on a measure space $(X, \mu)$ is said to be {\it f.f.m.p.}\ if $\mu$ is a finite positive measure on $X$ and if the action is essentially free and preserves $\mu$, where ``f.f.m.p." stands for ``essentially free and finite-measure-preserving". 
A measure-preserving action $\Gamma \c (X, \mu)$ is said to be {\it aperiodic} if any finite index subgroup of $\Gamma$ acts on $(X, \mu)$ ergodically. 
Two ergodic f.f.m.p.\ actions of discrete groups, $\Gamma \c (X, \mu)$ and $\Lambda \c (Y, \nu)$, are said to be {\it orbit equivalent (OE)} if there exists a Borel isomorphism $f$ between conull Borel subsets of $X$ and $Y$ such that $f_*\mu$ and $\nu$ are equivalent and the equality $f(\Gamma x)=\Lambda f(x)$ holds for a.e.\ $x\in X$.

Let $S$ be a connected, compact and orientable surface which may have non-empty boundary. 
Throughout the paper, we assume a surface to satisfy these conditions unless otherwise stated. 
Let $\mod^*(S)$ be the {\it extended mapping class group}, i.e., the group of isotopy classes of homeomorphisms from $S$ onto itself, where isotopy may move points in the boundary of $S$.
As shown in \cite{kida-oer} and \cite{kida-mer}, $\mod^*(S)$ satisfies rigidity in the sense of measure and orbit equivalence if $S$ is non-exceptional.

For a group $G$ and a subgroup $H$ of $G$, we set
\[{\rm LQN}_G(H)=\{\, g\in G\mid [H: gHg^{-1}\cap H]<\infty\,\}\]
and call it the {\it left-quasi-normalizer} of $H$ in $G$. 
We say that $H$ is {\it almost malnormal} in $G$ if $gHg^{-1}\cap H$ is finite for any $g\in G\setminus H$. 
We note that if $H$ is infinite and almost malnormal in $G$, then the equality ${\rm LQN}_G(H)=H$ holds.

\begin{thm}\label{thm-oer}
Let $S$ be a surface of genus $g$ with $p$ boundary components. 
We define an amalgamated free product $\Gamma =\Delta\ast_A\Delta$, where $\Delta$ and $A$ satisfy one of the following two conditions:
\begin{enumerate}
\item[(a)] Let $\Delta$ be a finite index subgroup of $\mod^*(S)$ with $3g+p\geq 5$ and $(g, p)\neq (1, 2), (2, 0)$.
Let $A$ be an infinite and almost malnormal subgroup of $\Delta$.
\item[(b)] Let $\Delta$ be a finite index subgroup of $\mod^*(S)$ with $3g+p\geq 7$ and $(g, p)\neq (2, 1)$.
Let $A$ be a subgroup of $\Delta$ such that the equality ${\rm LQN}_{\Delta}(A)=A$ holds and the group $\gamma A \gamma^{-1}\cap A$ is amenable for each $\gamma \in \Delta \setminus A$.
\end{enumerate}
Let $\Gamma \c (X, \mu)$ be an ergodic f.f.m.p.\ action such that the restriction $A\c (X, \mu)$ is aperiodic. 
If the action $\Gamma \c (X, \mu)$ is OE to an ergodic f.f.m.p.\ action $\Lambda \c (Y, \nu)$ of a discrete group $\Lambda$, then those two actions are indeed conjugate.
\end{thm}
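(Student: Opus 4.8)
The plan is to deduce conjugacy from the orbit equivalence by passing through measure equivalence and exploiting the Bass--Serre tree $T$ of the decomposition $\Gamma=\Delta\ast_A\Delta$. First I would convert the given OE between $\Gamma\c(X,\mu)$ and $\Lambda\c(Y,\nu)$ into a measure equivalence coupling $(\Omega,m)$ of $\Gamma$ and $\Lambda$ in the standard way (Furman's construction from the OE relation), recording the commuting measure-preserving actions of $\Gamma$ and $\Lambda$ together with the fundamental domains coming from $X$ and $Y$.

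Next I would invoke the coupling rigidity of $\Gamma$ with respect to $\aut(T)$ (Theorem \ref{thm-coup-rigid}): the coupling $\Omega$ carries an essentially unique $(\Gamma\times\Lambda)$-equivariant Borel map to $\aut(T)$, where $\Gamma$ acts on $\aut(T)$ by left translations through its embedding into $\aut(T)$. From the commuting $\Lambda$-equivariance I would extract a homomorphism $\rho\colon\Lambda\to\aut(T)$. Rigidity should force $\rho$ to have finite kernel and discrete image acting simplicially on $T$, so that the action $\Lambda\c T$ is available. Running Bass--Serre theory on this action --- and arguing that, because $T/\Gamma$ is a single edge and the equivariant map matches the two $\Gamma$-orbits of vertices, the quotient graph $T/\rho(\Lambda)$ is again a single edge --- I would obtain a decomposition $\Lambda=\Lambda_1\ast_B\Lambda_2$ (up to the finite kernel of $\rho$), in which $\Lambda_i$ and $B$ are the $\rho$-stabilizers of a vertex and an edge.

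The third step is to pin down these pieces. The equivariant map identifies each vertex stabilizer $\Lambda_i$ with a subgroup that is OE, via the restriction of the original OE, to the corresponding factor $\Delta$, and $B$ with a subgroup OE to $A$. Since the hypotheses $3g+p\geq 5$ (resp.\ $3g+p\geq 7$) together with the listed exclusions guarantee that $S$ is non-exceptional, the measure- and orbit-equivalence rigidity of $\mod^*(S)$ established in \cite{kida-mer} and \cite{kida-oer} applies to the finite-index subgroup $\Delta$: each $\Lambda_i$ is isomorphic to $\Delta$ and the restricted action $\Lambda_i\c Y$ is conjugate to $\Delta\c X$. The aperiodicity of $A\c(X,\mu)$ supplies the ergodicity of the edge piece needed to make these restricted-conjugacy statements unambiguous and to trivialize the finite kernel of $\rho$.

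Finally I would reassemble the vertex data into a global conjugacy. The two vertex conjugacies must be reconciled along the edge group, and this is exactly where the malnormality assumption on $A$ in case (a), or the hypotheses ${\rm LQN}_{\Delta}(A)=A$ together with amenability of the intersections $\gamma A\gamma^{-1}\cap A$ in case (b), enter: they guarantee that $B$ embeds into the two vertex groups in a unique and compatible way, so that the two conjugating isomorphisms glue to an isomorphism $\Lambda\cong\Gamma$ and the OE cocycle is cohomologous to it. The main obstacle, and the technical heart of the argument, is precisely this gluing and untwisting step --- ruling out a finite-index or measure-scaling defect so that one obtains genuine conjugacy of $\Gamma\c(X,\mu)$ and $\Lambda\c(Y,\nu)$ rather than mere virtual conjugacy, with the left-quasi-normalizer and almost-malnormality conditions serving exactly to eliminate those obstructions.
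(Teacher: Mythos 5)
Your proposal follows essentially the same route as the paper: the paper deduces Theorem \ref{thm-oer} in one step from the coupling rigidity of $\Gamma$ with respect to $\aut^*(T)$ (Theorem \ref{thm-coup-rigid}), the coupling rigidity of $\Delta$ with respect to $\mod^*(S)$ from Corollary 5.9 of \cite{kida-mer}, and Corollary \ref{cor-oer} (i.e.\ Corollary 6.5 of \cite{kida-ama}), which packages exactly the coupling--tree--Bass--Serre reconstruction and gluing you sketch. The only ingredient you leave implicit is that applying Theorem \ref{thm-coup-rigid} requires first knowing $\Delta\in\mathscr{F}$ in case (a) and $\Delta\in\mathscr{A}$ in case (b), which is the content of Propositions \ref{prop-c-f-mcg} and \ref{prop-c-a-mcg} and is where the numerical hypotheses on $(g,p)$ beyond non-exceptionality actually enter.
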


We obtain this theorem as a direct consequence of the result that the cocycle $\alpha \colon \Gamma \times X\rightarrow \Lambda$ associated with the OE is cohomologous to the cocycle arising from an isomorphism from $\Gamma$ onto $\Lambda$ (see Corollary \ref{cor-oer} for a precise statement).

Let us present three kinds of examples of $\Delta$ and $A$ satisfying the assumption in Theorem \ref{thm-oer}. 
The groups $\Delta$ and $A$ in the first example fulfill condition (a) in Theorem \ref{thm-oer}, and those in the second and third examples fulfill condition (b).
\begin{itemize}
\item Let $\Delta$ be a finite index subgroup of $\mod^*(S)$ with $3g+p\geq 5$ and $(g, p)\neq (1, 2), (2, 0)$.
Define $A$ as the stabilizer in $\Delta$ of the pair of pseudo-Anosov foliations for a pseudo-Anosov element.
\item Let $\Delta$ be a finite index subgroup of $\mod^*(S)$ with $3g+p\geq 7$ and $(g, p)\neq (2, 1)$.
Define $A$ as the stabilizer in $\Delta$ of a pants decomposition of $S$.
\item Let $\Delta$ be a finite index subgroup of $\mod^*(S)$ with $g\geq 3$ and $p=0$.
Define $A$ as the stabilizer in $\Delta$ of a Teichm\"uller disk so that $A$ contains a pseudo-Anosov element.
\end{itemize}

This paper is organized as follows. 
In Section \ref{sec-group}, we review basic facts on discrete measured groupoids and their amenability.
In Section \ref{sec-b-ame}, given an action of a discrete group on a tree, we present a sufficient condition for the action on the boundary of the tree to be amenable in a measure-theoretic sense. 
This is applied to the action on the Bass-Serre tree for an amalgamated free product of two discrete groups in Classes $\mathscr{F}$ and $\mathscr{A}$, introduced in Sections \ref{sec-c-f} and \ref{sec-c-a}, respectively. 
Basic properties and examples of groups in those classes are also discussed. 
In particular, mapping class groups of certain surfaces are shown to belong to these classes. 
In Section \ref{sec-coup-rigid}, we prove coupling rigidity of an amalgamated free product $\Gamma$ of two discrete groups in Classes $\mathscr{F}$ and $\mathscr{A}$. 
Applying results in \cite{kida-ama}, we show orbit equivalence rigidity of an ergodic f.f.m.p.\ action of $\Gamma$. 
The aforementioned three examples of subgroups of mapping class groups are discussed in Section \ref{sec-mcg}.


\section{Discrete measured groupoids}\label{sec-group}

In this section, we collect basic facts on discrete measured groupoids. 
We recommend the reader to consult \cite{kechris} and Chapter XIII, \S 3 in \cite{take3} for basic knowledge of standard Borel spaces and discrete measured groupoids, respectively.

\subsection{Terminology}

We mean by a {\it standard finite measure space} a standard Borel space with a finite positive measure. 
When the measure is a probability one, we refer to it as a {\it standard probability space}. 
For a discrete measured groupoid $\mathcal{G}$ on a standard finite measure space $(X, \mu)$ and a Borel subset $A$ of $X$ with positive measure, we denote by
\[(\mathcal{G})_A=\{\, g\in \mathcal{G}\mid r(g), s(g)\in A\, \}\label{restriction}\]
the groupoid restricted to $A$, where $r, s\colon \mathcal{G}\rightarrow X$ are the range and source maps, respectively. 
If $A$ is a Borel subset of $X$, then $\mathcal{G}A$ stands for the saturation
\[\mathcal{G}A=\{\, r(g)\in X\mid g\in \mathcal{G}, s(g)\in A\, \}\label{saturation},\]
which is a Borel subset of $X$. 
We say that a discrete measured groupoid $\mathcal{G}$ on a standard finite measure space $(X, \mu)$ is {\it finite} if for a.e.\ $x\in X$, $r^{-1}(x)$ consists of at most finitely many points. 
We say that $\mathcal{G}$ is of {\it infinite type} if for a.e.\ $x\in X$, $r^{-1}(x)$ consists of infinitely many points.

Given a Borel action of a discrete group $\Gamma$ on a standard Borel space $X$, we say that a $\sigma$-finite positive measure $\mu$ on $X$ is {\it quasi-invariant} for that action if it preserves the class of $\mu$. 
In this case, the action $\Gamma \c (X, \mu)$ is said to be {\it non-singular}.
 We denote by $\Gamma \ltimes (X, \mu)$ (or $\Gamma \ltimes X$ if there is no confusion) the discrete measured groupoid associated with that action.
This is equal to $\Gamma \times X$ as a set, and the range and source maps $r, s\colon \Gamma \times X\to X$ are defined by $r(\gamma, x)=\gamma x$ and $s(\gamma, x)=x$ for $\gamma \in \Gamma$ and $x\in X$.
The product is defined by $(\gamma_1, \gamma_2x)(\gamma_2, x)=(\gamma_1\gamma_2, x)$ for $\gamma_1, \gamma_2\in \Gamma$ and $x\in X$.
For $x\in X$, the unit at $x$ is defined to be $(e, x)$, where $e$ is the neutral element of $\Gamma$.

There is a close connection between orbit equivalence and isomorphism of discrete measured groupoids associated with group actions. 
Let $\Gamma \c (X, \mu)$ and $\Lambda \c (Y, \nu)$ be ergodic f.f.m.p.\ actions on standard finite measure spaces, and let $\mathcal{G}$ and $\mathcal{H}$ be the associated groupoids, respectively. 
It follows that $\mathcal{G}$ and $\mathcal{H}$ are isomorphic as discrete measured groupoids if and only if the two actions are OE. 
If there are Borel subsets $A\subset X$ and $B\subset Y$ with positive measure such that $(\mathcal{G})_A$ and $(\mathcal{H})_{B}$ are isomorphic, then the two actions are said to be {\it weakly orbit equivalent (WOE)}.

Let us say that a discrete measured groupoid $\cal{G}$ on a standard finite measure space $(X, \mu)$ is {\it nowhere amenable} if for any Borel subset $A$ of $X$ with positive measure, the restriction $(\cal{G})_A$ is non-amenable. 
We refer to \cite{ar-book} for amenability of discrete measured groupoids.

\subsection{Amenable actions}\label{subsec-ame}

We shall recall a definition of amenable actions in a measure-theoretic sense, originally introduced in \cite{zim-ame-action} in a different way. 
For a discrete group $\Gamma$, we mean by a {\it $\Gamma$-space} a countable discrete set on which $\Gamma$ acts by bijections.

\begin{defn}
Let $\Gamma$ be a discrete group and $V$ a $\Gamma$-space. Suppose that $\Gamma$ acts on a standard probability space $(X, \mu)$ non-singularly. 
An {\it invariant measurable system of means} for the $\Gamma$-space $V$ over $(X, \mu)$ is a family of means $\{ m_x\}_{x\in X}$ on $\ell^{\infty}(V)$ such that for each $\varphi \in L^{\infty}(X\times V)$,
\begin{itemize}
\item the function $X\ni x\mapsto m_x(\varphi(x, \cdot))$ is $\mu$-measurable; and 
\item the equality $m_{\gamma^{-1}x}(\varphi(\gamma^{-1}x, \cdot))=m_x((\gamma \varphi)(x, \cdot))$ holds for any $\gamma \in \Gamma$ and $\mu$-a.e.\ $x\in X$,  
\end{itemize}
where the action $\Gamma \c L^{\infty}(X\times V)$ is defined by $(\gamma \varphi)(x, v)=\varphi(\gamma^{-1}x, \gamma^{-1}v)$ for any $\gamma \in \Gamma$, $\varphi \in L^{\infty}(X\times V)$ and $(x, v)\in X\times V$.
\end{defn}

\begin{defn}
Let $\Gamma \c X$ be a Borel action of a discrete group $\Gamma$ on a standard Borel space $X$.
\begin{enumerate}
\item[(i)] Let $\mu$ be a probability measure on $X$ quasi-invariant for the action of $\Gamma$. 
We say that the action $\Gamma \c (X, \mu)$ is {\it amenable} if there exists an invariant measurable system of means for the $\Gamma$-space $\Gamma$ over $(X, \mu)$, where $\Gamma$ acts on $\Gamma$ by left multiplication.
\item[(ii)] We say that the action $\Gamma \c X$ is {\it amenable in a measure-theoretic sense} if the action $\Gamma \c (X, \mu)$ is amenable for any probability measure $\mu$ on $X$ quasi-invariant for the action $\Gamma \c X$.
\end{enumerate}
\end{defn}

We note that a non-singular action $\Gamma \c (X, \mu)$ is amenable in the above sense if and only if the associated groupoid is amenable. 
The following are useful criteria for actions or groupoids to be amenable. 
In this paper, we mean by a subgroupoid of $\mathcal{G}$ a Borel subgroupoid of $\mathcal{G}$ whose unit space is the same as that of $\mathcal{G}$.

\begin{prop}[\ci{Proposition A.7}{kida-mcg}]\label{prop-ext-ame}
Let $\Gamma$ be a discrete group acting on a standard probability space $(X, \mu)$ non-singularly, and let $\Lambda$ be a subgroup of $\Gamma$. 
If there is an invariant measurable system of means for the $\Gamma$-space $\Gamma/\Lambda$ defined by left multiplication, over $(X, \mu)$, and if the action $\Lambda \c (X, \mu)$ is amenable, then the action $\Gamma \c (X, \mu)$ is also amenable.
\end{prop}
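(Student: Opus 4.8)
The plan is to build the desired invariant measurable system of means for the $\Gamma$-space $\Gamma$ by combining the two given systems in a Fubini-type fashion, averaging first along the cosets of $\Lambda$ and then along $\Gamma/\Lambda$. Denote by $\{n_x\}_{x\in X}$ the invariant measurable system of means for the $\Gamma$-space $\Gamma/\Lambda$ supplied by the hypothesis, and by $\{m_x\}_{x\in X}$ an invariant measurable system of means for the $\Lambda$-space $\Lambda$ provided by amenability of $\Lambda\c(X,\mu)$. Fix a section $\sigma\colon\Gamma/\Lambda\to\Gamma$ of the quotient map with $\sigma(\Lambda)=e$; since $\Gamma/\Lambda$ is countable this is automatically Borel. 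Each $\gamma\in\Gamma$ factors uniquely as $\gamma=\sigma(c)\lambda$ with $c=\gamma\Lambda$ and $\lambda\in\Lambda$, and I record the associated cocycle $\beta(\gamma,c)=\sigma(\gamma c)^{-1}\gamma\sigma(c)\in\Lambda$, characterized by $\gamma\sigma(c)=\sigma(\gamma c)\beta(\gamma,c)$.

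For $\psi\in\ell^\infty(\Gamma)$ I would set
\[ M_x(\psi)=n_x\Big(c\mapsto m_{\sigma(c)^{-1}x}\big(\lambda\mapsto\psi(\sigma(c)\lambda)\big)\Big). \]
For fixed $c$ the inner assignment $\lambda\mapsto\psi(\sigma(c)\lambda)$ is a bounded function on $\Lambda$, so $m_{\sigma(c)^{-1}x}$ applies to it; the resulting function of $c$ is bounded by $\|\psi\|_\infty$, so $n_x$ applies, and as a composition of means $M_x$ is a mean on $\ell^\infty(\Gamma)$. For the measurability clause, given $\varphi\in L^\infty(X\times\Gamma)$ and a fixed $c$, the map $x\mapsto m_{\sigma(c)^{-1}x}(\lambda\mapsto\varphi(x,\sigma(c)\lambda))$ is measurable: after the substitution $x=\sigma(c)y$ it becomes $y\mapsto m_y(\varphi_c(y,\cdot))$ with $\varphi_c(y,\lambda)=\varphi(\sigma(c)y,\sigma(c)\lambda)\in L^\infty(X\times\Lambda)$, which is measurable by the measurability of $\{m_y\}$, and precomposing with the Borel map $x\mapsto\sigma(c)^{-1}x$ preserves this. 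As $\Gamma/\Lambda$ is countable these fiber functions assemble into an element of $L^\infty(X\times\Gamma/\Lambda)$, and the measurability of $\{n_x\}$ yields measurability of $x\mapsto M_x(\varphi(x,\cdot))$.

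The heart of the argument is the equivariance, which in fiberwise form reads $M_{\gamma x}(\gamma\cdot\psi)=M_x(\psi)$ for all $\gamma\in\Gamma$, where $(\gamma\cdot\psi)(w)=\psi(\gamma^{-1}w)$. Expanding $M_{\gamma x}(\gamma\cdot\psi)$, the inner function at a coset $c$ becomes $\lambda\mapsto\psi(\gamma^{-1}\sigma(c)\lambda)$; writing $\gamma^{-1}\sigma(c)=\sigma(\gamma^{-1}c)\beta(\gamma^{-1},c)$ rewrites it as $\beta(\gamma^{-1},c)^{-1}\cdot\big(\lambda\mapsto\psi(\sigma(\gamma^{-1}c)\lambda)\big)$, while the base point simplifies to $\sigma(c)^{-1}\gamma x=\beta(\gamma^{-1},c)^{-1}\sigma(\gamma^{-1}c)^{-1}x$. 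The decisive point is that these two occurrences of $\beta(\gamma^{-1},c)\in\Lambda$ match exactly, so the $\Lambda$-equivariance of $\{m_y\}$ absorbs them and the inner mean collapses to $m_{\sigma(\gamma^{-1}c)^{-1}x}(\lambda\mapsto\psi(\sigma(\gamma^{-1}c)\lambda))$, i.e.\ to $h(\gamma^{-1}c)$, where $h$ is the function on $\Gamma/\Lambda$ whose $n_x$-mean defines $M_x(\psi)$. Hence $M_{\gamma x}(\gamma\cdot\psi)=n_{\gamma x}(c\mapsto h(\gamma^{-1}c))=n_{\gamma x}(\gamma\cdot h)$, and the $\Gamma$-equivariance of $\{n_x\}$ finishes the computation as $n_x(h)=M_x(\psi)$.

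I expect the main obstacle to be pinning down the correct base point of the inner mean: it must be $\sigma(c)^{-1}x$ rather than $x$, for only then does the cocycle $\beta(\gamma^{-1},c)$ produced by left-translating within the coset cancel against the cocycle produced by moving the base point, allowing the $\Lambda$-equivariance to be invoked. Although $\sigma(c)^{-1}x$ is a genuine $\Gamma$-translate, the cancellation only ever calls on equivariance of $\{m_y\}$ along elements of $\Lambda$, which is precisely what is available; keeping track of these $\Lambda$-valued cocycles correctly is the one place where care is needed.
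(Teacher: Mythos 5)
Your proof is correct: the Fubini-type composition $M_x(\psi)=n_x\bigl(c\mapsto m_{\sigma(c)^{-1}x}(\lambda\mapsto\psi(\sigma(c)\lambda))\bigr)$, with the $\Lambda$-valued cocycle $\beta(\gamma^{-1},c)$ cancelling between the translated base point and the translated inner function, is exactly the standard argument and the one in the cited source \cite[Proposition A.7]{kida-mcg} (the present paper only quotes the result without proof). The only points you gloss over --- collecting the countably many exceptional null sets over $c\in\Gamma/\Lambda$ and over $\gamma\in\Gamma$ --- are routine since everything in sight is countable and the action is non-singular.
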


\begin{thm}[\ci{Corollary C}{aeg}]\label{thm-ext-ame}
Let $\Gamma$ be a discrete group acting on two standard probability spaces $(X, \mu)$ and $(Y, \nu)$ non-singularly. 
Suppose that there is a $\Gamma$-equivariant Borel map $f\colon X\rightarrow Y$ with $f_*\mu =\nu$. 
If the action $\Gamma \c (Y, \nu)$ is amenable, then the action $\Gamma \c (X, \mu)$ is also amenable.
\end{thm}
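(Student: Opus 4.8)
The plan is to construct an invariant measurable system of means for the $\Gamma$-space $\Gamma$ over $(X,\mu)$ by pulling back one over $(Y,\nu)$ along $f$. Since $\Gamma\c (Y,\nu)$ is amenable, there is a family of means $\{m_y\}_{y\in Y}$ on $\ell^{\infty}(\Gamma)$ satisfying the measurability and invariance conditions of the definition, and I would simply set $m'_x:=m_{f(x)}$ for $x\in X$. The assertion to be proved is then that $\{m'_x\}_{x\in X}$ is an invariant measurable system of means over $(X,\mu)$; granting this, the action $\Gamma\c (X,\mu)$ is amenable by definition.

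It is convenient to regard the system $\{m_y\}$ as a weak-$*$-measurable map $y\mapsto m_y$ from $Y$ into the means on $\ell^{\infty}(\Gamma)$, on which $\Gamma$ acts as the dual of left translation. In this language the invariance condition is the $\Gamma$-equivariance $m_{\gamma y}=\gamma\cdot m_y$ for a.e.\ $y$ (for each $\gamma$), and the invariance of the pulled-back system is then immediate from the $\Gamma$-equivariance of $f$:
\[
m'_{\gamma x}=m_{f(\gamma x)}=m_{\gamma f(x)}=\gamma\cdot m_{f(x)}=\gamma\cdot m'_x .
\]
Since $f_*\mu=\nu$, every $\nu$-a.e.\ identity for $\{m_y\}$ transfers to a $\mu$-a.e.\ identity for $\{m'_x\}$. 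Likewise, for a function of product form $\varphi(x,v)=\mathbf 1_A(x)\mathbf 1_B(v)$ with $A\subset X$ Borel and $B\subset\Gamma$, the function $x\mapsto m'_x(\varphi(x,\cdot))=\mathbf 1_A(x)\,m_{f(x)}(\mathbf 1_B)$ is measurable, because $y\mapsto m_y(\mathbf 1_B)$ is measurable and $f$ is Borel.

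The main obstacle is the measurability condition for a general $\varphi\in L^{\infty}(X\times\Gamma)$, where $\varphi(x,\cdot)$ genuinely depends on $x$ rather than only on $f(x)$. One cannot reduce this to the measurability hypothesis for $\{m_y\}$ by a direct substitution, and a naive density argument fails: $\ell^{\infty}(\Gamma)$ is non-separable, a mean need not be continuous under bounded pointwise convergence, and so product-form functions are not $L^{\infty}$-dense. I would resolve this either by a monotone-class argument, checking that the class of $\varphi$ for which $x\mapsto m'_x(\varphi(x,\cdot))$ is measurable contains the product functions and is closed under the relevant bounded limits, or, more robustly, by passing to the equivalent net-of-measures characterization of amenability coming from amenability of the associated groupoid (see \cite{ar-book}). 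In the latter formulation one has Borel maps $q_n\colon Y\to\mathrm{Prob}(\Gamma)$ that are asymptotically invariant, and the pullbacks $q_n\circ f\colon X\to\mathrm{Prob}(\Gamma)$ are asymptotically invariant over $(X,\mu)$ by the transparent computation
\[
\int_X\bigl\|\gamma\cdot q_n(\gamma^{-1}f(x))-q_n(f(x))\bigr\|_1\,d\mu(x)=\int_Y\bigl\|\gamma\cdot q_n(\gamma^{-1}y)-q_n(y)\bigr\|_1\,d\nu(y)\longrightarrow 0,
\]
which uses the $\Gamma$-equivariance of $f$ and $f_*\mu=\nu$. Either way, the measurability, which is the only genuinely technical point, is what the argument must take care to establish.
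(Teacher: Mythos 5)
The paper does not prove this statement: it is imported verbatim from Adams--Elliott--Giordano \ci{Corollary C}{aeg}, so there is no internal proof to compare against and your argument has to stand on its own. Of the two routes you offer, only the second is viable, and it does give a complete proof: amenability of the groupoid $\Gamma\ltimes(Y,\nu)$ is equivalent to the existence of an asymptotically invariant sequence of Borel maps $q_n\colon Y\rightarrow{\rm Prob}(\Gamma)$ (Proposition 3.2.5 and Remark 3.2.6 in \cite{ar-book}; one direction of this equivalence is exactly what the paper invokes in the proof of Theorem \ref{thm-ame-action}). The pullbacks $q_n\circ f$ are Borel, your displayed change-of-variables identity uses precisely the two hypotheses $f(\gamma x)=\gamma f(x)$ and $f_*\mu=\nu$, and the converse direction of the same equivalence then yields amenability of $\Gamma\c(X,\mu)$. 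Written out that way, this is the standard argument and I would accept it.

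The first route --- setting $m'_x:=m_{f(x)}$ and verifying the definition directly --- should be abandoned rather than patched, and not merely because it is ``technical''. The class of $\varphi\in L^{\infty}(X\times\Gamma)$ for which $x\mapsto m_{f(x)}(\varphi(x,\cdot))$ is measurable does contain the product functions, but it is not closed under bounded pointwise limits in the $\Gamma$-variable, because a mean is only finitely additive (for finite sets $F_n\uparrow\Gamma$ one has $m(\chi_{F_n})=0$ for all $n$ while $m(1)=1$); hence no monotone-class or functional monotone-class theorem applies --- this is exactly the obstruction you yourself identify one sentence earlier, so proposing a monotone-class argument afterwards is circular. Worse, the desired conclusion can genuinely fail: already for a single invariant mean $m$ on $\ell^{\infty}(\Gamma)$, the measurability of $x\mapsto m(\varphi(x,\cdot))$ for every bounded coordinatewise-measurable $\varphi$ is a delicate set-theoretic matter (it is the defining property of Mokobodzki's medial means, whose existence is independent of ZFC), so the pulled-back family need not be a measurable system of means at all. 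Commit to the $\ell^1$ formulation and cite the equivalence from \cite{ar-book}; that is the proof.
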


\begin{prop}\label{prop-ame-subgrd}
Suppose that a discrete group $\Gamma$ acts on a standard Borel space $B$ and acts on a standard probability space $(X, \mu)$ non-singularly. 
Let $\mathcal{S}$ be a subgroupoid of the associated groupoid $\mathcal{G}=\Gamma \ltimes (X, \mu)$. 
If the action $\Gamma \c B$ is amenable in a measure-theoretic sense and if there is a Borel map $\varphi \colon X\rightarrow B$ satisfying the equality $\varphi(\gamma x)=\gamma \varphi(x)$ for a.e.\ $(\gamma, x)\in \mathcal{S}$, then $\mathcal{S}$ is amenable.
\end{prop}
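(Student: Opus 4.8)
The plan is to produce an $\mathcal{S}$-invariant measurable system of means on $\ell^\infty(\Gamma)$ and then invoke the characterization that amenability of a subgroupoid $\mathcal{S}$ of $\mathcal{G}=\Gamma\ltimes(X,\mu)$ is equivalent to the existence of a $\mu$-measurable family $\{m_x\}_{x\in X}$ of means on $\ell^\infty(\Gamma)$, with $\Gamma$ regarded as a $\Gamma$-space under left multiplication, such that $\gamma_* m_x=m_{\gamma x}$ for a.e.\ $(\gamma,x)\in\mathcal{S}$; here $\gamma_*$ is the translation of means dual to left multiplication by $\gamma$ on $\ell^\infty(\Gamma)$. For the full groupoid this recovers the amenability of $\Gamma\c(X,\mu)$ in the sense recalled above, and demanding invariance only along $\mathcal{S}$ is the subgroupoid analogue. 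I would build $\{m_x\}$ by pulling back, through $\varphi$, an invariant system of means living on $B$ and coming from the amenability of $\Gamma\c B$.

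The obstacle to a naive pullback is that $\varphi$ is only $\mathcal{S}$-equivariant, so $\varphi_*\mu$ need not be quasi-invariant for $\Gamma\c B$, and there is no immediate quasi-invariant measure on $B$ to which the hypothesis applies. To remedy this I would first choose weights $w_\gamma>0$ with $\sum_{\gamma\in\Gamma}w_\gamma=1$ and set
\[ \beta=\sum_{\gamma\in\Gamma}w_\gamma\,\gamma_*(\varphi_*\mu). \]
Then $\beta$ is a $\Gamma$-quasi-invariant probability measure on $B$ with $\varphi_*\mu\ll\beta$. Since $\Gamma\c B$ is amenable in a measure-theoretic sense, the non-singular action $\Gamma\c(B,\beta)$ is amenable, so there is an invariant measurable system of means $\{n_b\}_{b\in B}$ for the $\Gamma$-space $\Gamma$ over $(B,\beta)$, whose defining invariance can be written schematically as $\gamma_* n_b=n_{\gamma b}$ for each $\gamma\in\Gamma$ and $\beta$-a.e.\ $b\in B$. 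Because $\varphi_*\mu\ll\beta$, the mean $n_{\varphi(x)}$ is defined for $\mu$-a.e.\ $x$, and $x\mapsto n_{\varphi(x)}$ is $\mu$-measurable.

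Finally I would set $m_x=n_{\varphi(x)}$ and verify the required invariance. For a fixed $\gamma\in\Gamma$, the $\mathcal{S}$-equivariance $\varphi(\gamma x)=\gamma\varphi(x)$ together with the invariance of $\{n_b\}$ gives
\[ m_{\gamma x}=n_{\varphi(\gamma x)}=n_{\gamma\varphi(x)}=\gamma_* n_{\varphi(x)}=\gamma_* m_x \]
for $\mu$-a.e.\ $x$ with $(\gamma,x)\in\mathcal{S}$, where the third equality uses $\varphi_*\mu\ll\beta$ to transport the $\beta$-a.e.\ invariance to a $\mu$-a.e.\ statement. As $\Gamma$ is countable, ranging over $\gamma$ upgrades this to $m_{\gamma x}=\gamma_* m_x$ for a.e.\ $(\gamma,x)\in\mathcal{S}$, which is exactly the invariance needed, so $\mathcal{S}$ is amenable. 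The genuinely delicate step is the passage through the auxiliary measure $\beta$: it is what lets the measure-theoretic amenability of $\Gamma\c B$ be applied despite $\varphi$ being equivariant merely along $\mathcal{S}$ rather than under all of $\Gamma$. The remaining points — measurability of the family and the Fubini-type argument converting the per-$\gamma$ statements into one a.e.\ statement on $\mathcal{S}$ — are routine.
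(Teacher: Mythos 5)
Your argument is correct and is essentially the proof the paper has in mind: the paper proves this proposition only by citing Theorem 5.10 of \cite{kida-survey}, whose argument likewise applies the measurewise amenability of $\Gamma \c B$ to a quasi-invariant average of the $\Gamma$-translates of $\varphi_*\mu$ and pulls the resulting invariant system of means back through $\varphi$, using the $\mathcal{S}$-equivariance of $\varphi$ exactly as you do. The one point you assert without proof --- that an $\mathcal{S}$-invariant measurable system of means on $\ell^{\infty}(\Gamma)$ forces amenability of the subgroupoid $\mathcal{S}$ --- is the standard characterization underlying that cited proof (compare Proposition \ref{prop-ext-ame} and Theorem A.2 of \cite{kida-mcg}), so no essential idea is missing.
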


The last proposition can be obtained by following the proof of Theorem 5.10 in \cite{kida-survey} after exchanging symbols appropriately. 
We make use of amenability to find a fixed point when a groupoid acts on a space. 
Such a fixed point is formulated as follows.

\begin{defn}\label{defn-inv-map}
Let $\mathcal{G}$ be a discrete measured groupoid on a standard probability space $(X, \mu)$, and let $S$ be a standard Borel space. 
Suppose that we have a Borel action of a discrete group $\Gamma$ on $S$ and a groupoid homomorphism $\rho \colon \mathcal{G}\rightarrow \Gamma$. 
A Borel map $\varphi \colon X\rightarrow S$ is said to be {\it $(\cal{G}, \rho)$-invariant} if it satisfies the equality
\[\rho(g)\varphi(s(g))=\varphi(r(g))\quad \textrm{for\ a.e.}\ g\in \mathcal{G}.\]
When $\rho$ is understood from the context, this map $\varphi$ is said to be {\it $\cal{G}$-invariant}.
More generally, if $A$ is a Borel subset of $X$ with positive measure and if a Borel map $\varphi \colon A\rightarrow S$ satisfies the above equality for a.e.\ $g\in (\mathcal{G})_A$, then we also say that $\varphi$ is {\it $\cal{G}$-invariant}.  
\end{defn}

We will often apply amenability of actions or groupoids in the context of the following proposition, which directly follows from the formulation of amenability in terms of the fixed point property (see \cite{zim-ame-action} and Theorem 4.2.7 in \cite{ar-book}).

\begin{prop}\label{prop-ame-basic}
Let $\mathcal{G}$ be a discrete measured groupoid on a standard probability space $(X, \mu)$, $\Gamma$ a discrete group, and $\rho \colon \mathcal{G}\rightarrow \Gamma$ a groupoid homomorphism. 
Suppose that $\Gamma$ acts on a compact metrizable space $K$ continuously. 
We denote by $M(K)$ the space of probability measures on $K$, on which $\Gamma$ naturally acts. 
If $\mathcal{G}$ is amenable, then there exists a $\cal{G}$-invariant Borel map from $X$ into $M(K)$.
\end{prop}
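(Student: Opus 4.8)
The plan is to realize the desired map as an invariant section for an affine action of $\mathcal{G}$ on a constant bundle of compact convex sets, and then to invoke the fixed-point characterization of groupoid amenability. First I would record the relevant geometry of the target: since $K$ is compact metrizable, the space $C(K)$ of continuous functions is a separable Banach space, and $M(K)$ sits inside its dual $C(K)^*$ as a compact, convex, metrizable subset for the weak-$*$ topology. The continuous action $\Gamma \c K$ induces, by pushforward of measures, a continuous affine action $\Gamma \c M(K)$ by affine homeomorphisms of $M(K)$.

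Next I would form the constant measurable field of compact convex sets $x\mapsto K_x:=M(K)$ over $(X, \mu)$, and let $\mathcal{G}$ act on this field through $\rho$: an arrow $g\in \mathcal{G}$ with $s(g)=x$ and $r(g)=y$ acts as the affine homeomorphism $K_x\rightarrow K_y$ given by the action of $\rho(g)\in \Gamma$ on $M(K)$. The groupoid identities for the homomorphism $\rho$, together with the fact that $\Gamma$ acts by affine homeomorphisms, guarantee that this prescription is a genuine continuous affine action of $\mathcal{G}$ on the field $\{K_x\}_{x\in X}$. Amenability of $\mathcal{G}$ then supplies, via the fixed-point formulation of amenability (Theorem 4.2.7 in \cite{ar-book}; see also \cite{zim-ame-action}), a $\mu$-measurable invariant section $\varphi \colon X\rightarrow M(K)$. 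By construction, invariance of $\varphi$ reads $\rho(g)\varphi(s(g))=\varphi(r(g))$ for a.e.\ $g\in \mathcal{G}$, which is precisely the $(\mathcal{G}, \rho)$-invariance demanded in Definition \ref{defn-inv-map}.

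The only point requiring care, and the step I expect to be the main obstacle, is verifying that pulling the $\Gamma$-action back along $\rho$ yields an action meeting the measurability hypotheses of the cited fixed-point theorem. Concretely, one must check that the cocycle $g\mapsto \rho(g)$, regarded as taking values in the group of affine homeomorphisms of $M(K)$, acts measurably on the constant field, so that $\{K_x\}_{x\in X}$ equipped with this action is a legitimate measurable $\mathcal{G}$-bundle of compact convex sets in the sense required by \cite{ar-book}. Once this measurability is confirmed, the conclusion follows immediately from the fixed-point theorem, and no further estimates are needed.
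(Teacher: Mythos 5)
Your argument is correct and is exactly the one the paper intends: the paper proves this proposition simply by citing the fixed-point formulation of groupoid amenability (Theorem 4.2.7 in \cite{ar-book}), applied to the constant measurable field of compact convex sets $x\mapsto M(K)$ on which $\mathcal{G}$ acts affinely through $\rho$. The measurability point you flag is routine here because the field is constant and $\rho$ is Borel, so no further work is needed.
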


Finally, we present an observation on invariant Borel maps, which will be used repeatedly in Sections \ref{sec-c-f} and \ref{sec-c-a}.

\begin{lem}[\ci{Lemma 2.3}{kida-ama}]\label{lem-inv-ext}
Let $\mathcal{G}$ be a discrete measured groupoid on a standard probability space $(X, \mu)$. 
Suppose that we have a Borel action of a discrete group $\Gamma$ on a standard Borel space $S$ and a groupoid homomorphism $\rho \colon \mathcal{G}\rightarrow \Gamma$. 
If $A$ is a Borel subset of $X$ with positive measure and if $\varphi \colon A\rightarrow S$ is a $\cal{G}$-invariant Borel map, then $\varphi$ extends to a $\cal{G}$-invariant Borel map from $\cal{G}A$ into $S$. 
\end{lem}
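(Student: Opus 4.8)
The plan is to define the extension by the only formula that invariance permits: for $y\in\mathcal{G}A$ choose any $g\in\mathcal{G}$ with $r(g)=y$ and $s(g)\in A$, and set $\tilde\varphi(y)=\rho(g)\varphi(s(g))$. To see that this is forced and consistent, suppose $g_1,g_2$ both satisfy $r(g_i)=y$ and $s(g_i)\in A$. Then $g_2^{-1}g_1$ is composable, with $r(g_2^{-1}g_1)=s(g_2)\in A$ and $s(g_2^{-1}g_1)=s(g_1)\in A$, so it lies in $(\mathcal{G})_A$; applying the invariance of $\varphi$ to it and using that $\rho$ is a homomorphism yields $\rho(g_1)\varphi(s(g_1))=\rho(g_2)\varphi(s(g_2))$. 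Thus the value is independent of the choice of $g$ wherever the invariance relation genuinely holds, which is where the measure-theoretic care will enter.

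First I would produce a Borel version of this selection. Since $r\colon\mathcal{G}\to X$ is countable-to-one, the Lusin--Novikov theorem gives a countable Borel partition $\mathcal{G}=\bigsqcup_n\sigma_n$ on each piece of which both $r$ and $s$ are injective, i.e.\ each $\sigma_n$ is a Borel bisection. Put $E_n=r(\sigma_n\cap s^{-1}(A))$, which is Borel by the Lusin--Souslin theorem, so that $\mathcal{G}A=\bigcup_nE_n$, and set $F_n=E_n\setminus\bigcup_{m<n}E_m$. For $y\in F_n$ there is a unique $g(y)\in\sigma_n$ with $r(g(y))=y$ and $s(g(y))\in A$, and $y\mapsto g(y)$ is Borel. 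Defining $\tilde\varphi(y)=\rho(g(y))\varphi(s(g(y)))$ then gives a Borel map $\tilde\varphi\colon\mathcal{G}A\to S$, since $\rho$, $\varphi$ and the action $\Gamma\c S$ are all Borel.

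It remains to check that $\tilde\varphi$ extends $\varphi$ and is $\mathcal{G}$-invariant, and here the null sets must be handled. Let $B=\{\,g\in(\mathcal{G})_A:\rho(g)\varphi(s(g))\neq\varphi(r(g))\,\}$, which is null for the natural measure $\tilde\mu$ on $\mathcal{G}$; since the fibres are discrete and $\tilde\mu$ is quasi-invariant, this means that for $\mu$-a.e.\ $x$ both $r^{-1}(x)\cap B$ and $s^{-1}(x)\cap B$ are empty. For $y\in A$ we have $g(y)\in(\mathcal{G})_A$ with $r(g(y))=y$, so $g(y)\in B$ only for $y$ in a $\mu$-null set; hence $\tilde\varphi=\varphi$ a.e.\ on $A$. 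For invariance, given $h$ with $r(h),s(h)\in\mathcal{G}A$, the element $k(h)=g(r(h))^{-1}\,h\,g(s(h))$ lies in $(\mathcal{G})_A$, and the same homomorphism computation as above shows that $\rho(h)\tilde\varphi(s(h))=\tilde\varphi(r(h))$ holds precisely when $k(h)\notin B$. The main obstacle is therefore to show that $\{\,h:k(h)\in B\,\}$ is $\tilde\mu$-null. I would do this by partitioning the $h$-space into countably many Borel pieces on which $g(s(h))$ and $g(r(h))$ are restrictions of single bisections; on each piece $h\mapsto k(h)$ is a composition of a left and a right translation by bisections, and such translations preserve the measure class of $\mathcal{G}$ by quasi-invariance, so they pull the null set $B$ back to a null set. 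Summing over the countably many pieces gives the claim, and the $\mathcal{G}$-invariance of $\tilde\varphi$ on $\mathcal{G}A$ follows.
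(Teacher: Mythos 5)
Your argument is correct: the well-definedness computation via $g_2^{-1}g_1\in(\mathcal{G})_A$, the Lusin--Novikov selection of a Borel family of bisections to make the choice $y\mapsto g(y)$ measurable, and the reduction of the invariance of $\tilde\varphi$ to the nullity of $\{\,h : k(h)\in B\,\}$ via translation by bisections (which preserve the measure class of the groupoid) are all sound, and you have correctly handled the only delicate point, namely that the hypothesis holds only for a.e.\ $g\in(\mathcal{G})_A$. The paper itself gives no proof of this lemma --- it is quoted from Lemma 2.3 of the reference \cite{kida-ama} --- but your argument is the standard one that such a proof would follow, so there is no substantive divergence to report.
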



\subsection{Normal subgroupoids}\label{subsec-nor}

In Section 2.4 of \cite{kida-mer}, we introduced normal subgroupoids of a discrete measured groupoid, based on \cite{fsz} and Section 4.6.1 in \cite{kida-mcg}. 
This is a generalization of normal subrelations of a discrete measured equivalence relation and also a generalization of normal subgroups of a discrete group.

Let $\mathcal{G}$ be a discrete measured groupoid on a standard probability space $(X, \mu)$ and $r, s\colon \mathcal{G}\rightarrow X$ the range and source maps, respectively. 
Let $\mathcal{S}$ be a subgroupoid of $\mathcal{G}$. 
We denote by ${\rm End}_{\mathcal{G}}(\mathcal{S})$ the set of all Borel maps $\varphi \colon {\rm dom}(\varphi)\rightarrow \mathcal{G}$, where ${\rm dom}(\varphi)$ is a Borel subset of $X$, such that 
\begin{enumerate}
\item[(a)] for a.e.\ $x\in {\rm dom}(\varphi)$, the equality $s(\varphi(x))=x$ holds; and
\item[(b)] for a.e.\ $g\in (\mathcal{G})_{{\rm dom}(\varphi)}$, we have
\[g\in \mathcal{S}\quad \textrm{if\ and\ only\ if}\quad \varphi(r(g))g \varphi(s(g))^{-1}\in \mathcal{S}.\]
\end{enumerate}

\begin{defn}
Let $\cal{G}$ be a discrete measured groupoid on a standard probability space $(X, \mu)$.
A subgroupoid $\mathcal{S}$ of $\mathcal{G}$ is said to be {\it normal} in $\mathcal{G}$ if there exists a collection of countably many maps in ${\rm End}_{\mathcal{G}}(\mathcal{S})$, $\{ \phi_n\}$, such that for a.e.\ $g\in \mathcal{G}$, we have $r(g)\in {\rm dom}(\phi_n)$ and $\phi_n(r(g))g\in \mathcal{S}$ for some $n$. 
In this case, we write $\mathcal{S}\lhd \mathcal{G}$.
\end{defn}

The following basic properties are immediately verified. 
We refer to Lemma 2.16 in \cite{kida-mer} for a proof of assertion (ii).

\begin{lem}
The following assertions hold:
\begin{enumerate}
\item Let $G$ be a discrete group acting on a standard probability space $(X, \mu)$ non-singularly, and let $H$ be a normal subgroup of $G$. 
Then $H\ltimes X$ is a normal subgroupoid of $G\ltimes X$.
\item Let $\mathcal{G}$ be a discrete measured  groupoid on a standard probability space $(X, \mu)$ and $\mathcal{S}$ a normal subgroupoid of $\mathcal{G}$. If $A$ is a Borel subset of $X$ with positive measure, then $(\mathcal{S})_A$ is normal in $(\mathcal{G})_A$.
\end{enumerate}
\end{lem}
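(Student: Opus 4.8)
The plan is to prove assertion (i) directly from the definition of a normal subgroupoid and to obtain assertion (ii) from the cited Lemma 2.16 in \cite{kida-mer}. For (i), I would set $\mathcal{G}=G\ltimes X$ and $\mathcal{S}=H\ltimes X$, and first observe that $\mathcal{S}$ is a Borel subgroupoid of $\mathcal{G}$ with the same unit space $X$: since $e\in H$, each unit $(e,x)$ of $\mathcal{G}$ already lies in $\mathcal{S}$. The main task is then to exhibit a countable family in ${\rm End}_{\mathcal{G}}(\mathcal{S})$ witnessing $\mathcal{S}\lhd\mathcal{G}$. For each $\gamma\in G$ I would define $\phi_\gamma\colon X\to\mathcal{G}$ by $\phi_\gamma(x)=(\gamma,x)$; as $G$ is countable this family is countable, each $\phi_\gamma$ is Borel with ${\rm dom}(\phi_\gamma)=X$, and condition (a) holds trivially because $s(\gamma,x)=x$.

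The crux is condition (b). For $g=(h,y)\in\mathcal{G}$, using the groupoid product together with the inverse $(\gamma,y)^{-1}=(\gamma^{-1},\gamma y)$, a direct computation gives $\phi_\gamma(r(g))\,g\,\phi_\gamma(s(g))^{-1}=(\gamma h\gamma^{-1},\gamma y)$. This element lies in $\mathcal{S}=H\ltimes X$ exactly when $\gamma h\gamma^{-1}\in H$, whereas $g\in\mathcal{S}$ exactly when $h\in H$; normality of $H$ in $G$ renders these two conditions equivalent, so $\phi_\gamma\in{\rm End}_{\mathcal{G}}(\mathcal{S})$. This is the single point at which the hypothesis $H\lhd G$ is used, and arranging the conjugation to appear in the right place is the only genuine bookkeeping obstacle. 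For the covering property in the definition of normality, given a.e.\ $g=(h,y)$ I would choose the index $\gamma=h^{-1}$, which belongs to the family and yields $r(g)\in{\rm dom}(\phi_{h^{-1}})$ together with $\phi_{h^{-1}}(r(g))\,g=(e,y)\in\mathcal{S}$, the unit at $y$. Hence $\{\phi_\gamma\}_{\gamma\in G}$ witnesses $\mathcal{S}\lhd\mathcal{G}$, proving (i).

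Assertion (ii) is precisely Lemma 2.16 in \cite{kida-mer}, so I would invoke it directly. The idea of its proof is to restrict a witnessing family $\{\phi_n\}\subset{\rm End}_{\mathcal{G}}(\mathcal{S})$ to $A$ and to correct the resulting maps so that their ranges return into $A$; the one real obstacle is to carry out this range-correction in a measurable way while preserving membership in ${\rm End}_{(\mathcal{G})_A}((\mathcal{S})_A)$, which is handled there by a measurable selection argument.
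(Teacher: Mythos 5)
Your proposal is correct and matches the paper's treatment: the paper declares both assertions ``immediately verified'' and simply cites Lemma 2.16 of the reference for assertion (ii), which is exactly what you do, and your explicit verification of (i) via the family $\phi_\gamma(x)=(\gamma,x)$ — with the computation $\phi_\gamma(r(g))\,g\,\phi_\gamma(s(g))^{-1}=(\gamma h\gamma^{-1},\gamma y)$ and the covering witness $\gamma=h^{-1}$ — is the intended direct argument. No gaps.
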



\section{Boundary amenability of actions on trees}\label{sec-b-ame}

Given an amalgamated free product $\Gamma =\Gamma_1\ast_A\Gamma_2$ of discrete groups, one associates a simplicial tree $T$, called the Bass-Serre tree, on which $\Gamma$ acts by simplicial automorphisms. 
Theorem \ref{thm-ame-action} stated below gives a sufficient condition for the action of $\Gamma$ on the boundary $\partial T$ of $T$ to be amenable in a measure-theoretic sense.

Let $T$ be a connected simplicial tree which has at most countably many simplices and is not necessarily locally finite. 
We denote by $V(T)$ the set of vertices of $T$. 
Let $d$ be a path metric on $T$ so that each edge of $T$ is isometric to the unit interval $[0, 1]$ with the Euclidean metric. 
An ideal boundary $\partial T$\label{boundary} of $T$ is defined as the set of all equivalence classes of infinite geodesic rays in $T$, where two geodesic rays in $T$ are equivalent if the Hausdorff distance between them is finite. 
For any two distinct points $x$, $y$ in the union $\Delta T=V(T)\cup \partial T$\label{delta-t}, there exists a unique geodesic path from $x$ to $y$. 
We denote it by $l_x^y$\label{geodesic} and often identify $l_x^y$ with the sequence of vertices that $l_x^y$ passes through. 
For $x\in \Delta T$ and a subset $A$ of $V(T)$, we define the set
\[M(x, A)=\{\, y\in \Delta T\mid l_x^y\cap (A\setminus \{ x\})=\emptyset \, \}.\] 
The following theorem defines a topology on $\Delta T$ and gives basic properties of it.

\begin{thm}[\ci{Section 8}{bow-rel-hyp}]
The collection of the sets $M(x, A)$ for all $x\in \Delta T$ and all finite subsets $A$ of $V(T)$ defines an open basis for a topology on $\Delta T$ satisfying the following properties:
\begin{enumerate}
\item[(i)] The space $\Delta T$ is compact and Hausdorff. 
\item[(ii)] The set $V(T)$ is dense in $\Delta T$, and the boundary $\partial T$ is a $G_{\delta}$-subset and in particular a Borel subset of $\Delta T$.
\item[(iii)] Any simplicial automorphism of $T$ is uniquely extended to a homeomorphism from $\Delta T$ onto itself.
\item[(iv)] The relative topology on $\partial T$ coincides with the topology on it as the Gromov boundary of $T$.
\end{enumerate}
\end{thm}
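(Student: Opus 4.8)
The plan is to first check that the sets $M(x,A)$ really form a basis and then establish the four properties in turn, treating compactness as the one serious point. The whole argument rests on a single structural fact about trees: for any three points $x,y,z\in\Delta T$ the geodesics $l_x^y$, $l_y^z$, $l_z^x$ form a tripod meeting at a median vertex $m$, so that in particular $l_x^y\subseteq l_x^z\cup l_z^y$. From this I would deduce the crucial monotonicity statement that if $z\in M(x,A)$ and $B\supseteq A$, then $M(z,B)\subseteq M(x,A)$: for $y\in M(z,B)$ one has $l_z^y\cap(A\setminus\{z\})=\emptyset$, and combining $l_x^y\subseteq l_x^z\cup l_z^y$ with $l_x^z\cap(A\setminus\{x\})=\emptyset$ and the observation that $z\notin A$ unless $z=x$ forces $l_x^y\cap(A\setminus\{x\})=\emptyset$. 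Applying this with $B=A_1\cup A_2$ to a point $z\in M(x_1,A_1)\cap M(x_2,A_2)$ produces a basic set $M(z,A_1\cup A_2)$ containing $z$ and contained in the intersection, which verifies the basis axioms (and incidentally shows one may always re-center a basic neighbourhood at any of its points).

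For property (i), the Hausdorff axiom is easy: given $x\neq y$, pick an edge $\{u,w\}$ on $l_x^y$ with $u$ nearer $x$; then $M(x,\{w\})$ and $M(y,\{u\})$ are disjoint neighbourhoods, since deleting the open edge splits $T$ into the half-tree containing $x$ (and $u$) and the one containing $y$ (and $w$), and a point lies in $M(x,\{w\})$ (resp.\ $M(y,\{u\})$) exactly when it lies on the $x$-side (resp.\ $y$-side). Compactness is the main obstacle, and here I would embed $\Delta T$ into a product of compact spaces. For each vertex $v$ let $\hat E_v$ be the one-point compactification of the (at most countable, discrete) set $E_v$ of edges at $v$, with extra point $*_v$, and define $D\colon\Delta T\to\prod_{v}\hat E_v$ by letting $D(z)_v$ be the first edge of $l_v^z$ when $z\neq v$ and $*_v$ when $z=v$. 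One checks that $D$ is injective (two distinct points are separated at any interior vertex of the geodesic joining them, or at one of them if it is a vertex) and that $D$ and its inverse are continuous by comparing the basis $\{M(x,A)\}$ with the product topology, so $D$ is a homeomorphism onto its image.

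The heart of the compactness argument is to show this image is closed, hence compact by Tychonoff. I would characterise it by a \emph{coherence} condition: a tuple $(\epsilon_v)_v$ lies in the image iff for every edge $e=\{u,w\}$ exactly one of $\epsilon_u=e$, $\epsilon_w=e$ holds. Since $\{e\}$ is clopen in $\hat E_v$, each such condition is clopen in the product, and the intersection over the countably many edges is closed. The real content is that coherence is sufficient: reading $\epsilon_v=e$ as orienting $e$ away from $v$, coherence forces every edge to receive exactly one orientation and every vertex to have at most one outgoing edge, so that, $T$ being a tree, this flow has a unique sink or runs off to a unique end, which is the point $z$ with $D(z)=(\epsilon_v)_v$. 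This is the step I expect to require the most care, chiefly in ruling out two sinks (a peak vertex between them would have two outgoing edges) and in confirming that the reconstructed outgoing ray does not backtrack and hence is a genuine geodesic.

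The remaining properties follow quickly. For (ii), every nonempty basic set $M(x,A)$ contains a vertex—if $x\in\partial T$, any vertex far enough out along the ray representing $x$, beyond the finite set $A$, lies in $M(x,A)$—so $V(T)$ is dense; and since $\Delta T$ is Hausdorff with $V(T)$ countable, $V(T)=\bigcup_{v}\{v\}$ is $F_\sigma$, whence $\partial T=\Delta T\setminus V(T)$ is $G_\delta$ and in particular Borel. For (iii), a simplicial automorphism $g$ sends geodesics to geodesics and ends to ends, hence induces a bijection of $\Delta T$ with $g\,M(x,A)=M(gx,gA)$, which is therefore a homeomorphism; it is the unique continuous extension of $g$ because $V(T)$ is dense and the target is Hausdorff. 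Finally, for (iv) I would match neighbourhood bases at each $\xi\in\partial T$: fixing a basepoint $o$ and the ray $o=v_0,v_1,\dots$ toward $\xi$, the Gromov-boundary shadow $\{\eta:(\eta\mid\xi)_o\ge n\}$ is the set of ends lying beyond $v_{n-1}$ on the $\xi$-side, which is exactly $M(\xi,\{v_{n-1}\})\cap\partial T$; as $n$ varies these two families of neighbourhoods are mutually cofinal, so the two topologies on $\partial T$ coincide.
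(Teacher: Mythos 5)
This theorem is not proved in the paper at all: it is imported verbatim from Section 8 of Bowditch's \emph{Relatively hyperbolic groups}, so there is no in-paper argument to compare yours against. Judged on its own, your proof is correct and complete in all essentials. The monotonicity lemma (with the observation that $z\in M(x,A)$ forces $z\notin A$ unless $z=x$) does give the basis axioms and the Hausdorff separation; the embedding $D$ into $\prod_v\hat E_v$ is injective and bicontinuous, since the coordinate conditions $\{z: D(z)_v=e\}$ are exactly the sets $M(w,\{v\})$ for $e=\{v,w\}$ and, conversely, $M(x,A)=\bigcap_{a\in A\setminus\{x\}}\{y:D(y)_a=e_a\}$ with $e_a$ the first edge of $l_a^x$; and the coherence condition does characterize the image, with the ``no two sinks'' step following from your peak-vertex observation (along any geodesic the edge orientations must read as a block of ``rightward'' edges followed by a block of ``leftward'' edges, so all flow lines merge at a single vertex and thence share their limit). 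Two spots deserve an explicit sentence when written up: in (iv), the claim that the sets $M(\xi,\{v_{n-1}\})\cap\partial T$ are cofinal among \emph{all} relative neighbourhoods $M(\xi,A)\cap\partial T$ needs the projection argument (choose $n$ exceeding the index of the projection of every $a\in A$ onto the ray $[o,\xi)$); and in the compactness step one should record that the flow path from any vertex, being non-backtracking in a tree, \emph{is} the geodesic to the limit point, which is what makes $D(z)_v=\epsilon_v$ hold at every $v$ and not just at the starting vertex. Both are routine. Your route — compactness via an edge-orientation embedding into a product of one-point compactifications — is a clean and standard way to handle the non-locally-finite case, and is a perfectly acceptable substitute for the cited source.
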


It is known that for each word-hyperbolic group, its action on its boundary is amenable in a measure-theoretic sense. 
This fact is originally proved by Adams \cite{adams-ame}. 
The construction of the map $H$ in the proof of the following theorem basically relies on Germain's proof of this fact in Appendix B of \cite{ar-book}.

\begin{thm}\label{thm-ame-action}
Let $T$ be a connected simplicial tree with at most countably many simplices and $d$ the metric on $T$ defined above. 
Let $\Gamma$ be a discrete group acting on $T$ by simplicial automorphisms. 
Suppose that the following three conditions hold:
\begin{enumerate}
\item[(a)] The boundary $\partial T$ is non-empty.
\item[(b)] The action of $\Gamma$ on $V(T)$ has only finitely many orbits.
\item[(c)] There is a positive integer $N$ such that for any two vertices $u, v\in V(T)$ with $d(u, v)\geq N$, their stabilizer in $\Gamma$ defined as
\[\{\, \gamma \in \Gamma \mid \gamma u=u,\ \gamma v=v\, \}\]
is amenable.
\end{enumerate}
Then the action $\Gamma \c \partial T$ is amenable in a measure-theoretic sense. 
\end{thm}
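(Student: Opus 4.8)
The plan is to transfer amenability from a well-chosen auxiliary $\Gamma$-space, on which amenability is manifest, to the boundary by means of an invariant measurable system of means, and then invoke the extension result Proposition \ref{prop-ext-ame}. Fix a quasi-invariant probability measure $\mu$ on $\partial T$; by condition (a) there are such boundary points, and we must show that $\Gamma \c (\partial T,\mu)$ is amenable. Let
\[W=\{\,(u,v)\in V(T)\times V(T)\mid d(u,v)\geq N\,\},\]
on which $\Gamma$ acts diagonally. Since an automorphism fixing two vertices fixes the geodesic between them pointwise, the stabilizer of any $(u,v)\in W$ is the pointwise stabilizer of a segment of length $\geq N$ and is therefore amenable by condition (c). As $W$ is a countable $\Gamma$-set decomposing into orbits $\Gamma/\Lambda_j$ with each $\Lambda_j$ amenable, the action $\Gamma \c W$ is amenable in the measure-theoretic sense.

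The heart of the proof is to construct an invariant measurable system of means for the $\Gamma$-space $W$ over $(\partial T,\mu)$; this is the map $H$, built along the lines of Germain's argument in Appendix B of \cite{ar-book}. Fix a base vertex $o\in V(T)$, and for $\xi\in\partial T$ let $(o=x_0(\xi),x_1(\xi),\dots)$ be the vertices along the geodesic ray $l_o^{\xi}$. For $n\geq 1$ I would form the finitely supported probability measure
\[\beta_n^\xi=\frac1n\sum_{k=0}^{n-1}\delta_{(x_k(\xi),\,x_{k+N}(\xi))}\in \mathrm{Prob}(W),\]
a Ces\`aro average of the length-$N$ segments read off along the ray toward $\xi$. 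The key point is that the basepoint dependence washes out: for a fixed $\gamma\in\Gamma$ the rays $l_o^{\gamma\xi}$ and $\gamma\,l_o^{\xi}=l_{\gamma o}^{\gamma\xi}$ converge to the same end and hence share a common tail, so $\gamma\cdot(x_k(\xi),x_{k+N}(\xi))$ agrees with a term of $\beta_n^{\gamma\xi}$ up to a bounded index shift for all large $k$. This yields the Reiter-type estimate $\|\gamma\cdot\beta_n^\xi-\beta_n^{\gamma\xi}\|_1=O(d(o,\gamma o)/n)\to 0$ for a.e.\ $\xi$. Passing to a weak-$*$ limit of the means $\beta_n^\xi$ along a Banach limit, chosen so that $\xi\mapsto m_\xi$ is $\mu$-measurable, produces means $m_\xi$ on $\ell^\infty(W)$ for which the defining equivariance identity holds \emph{exactly}. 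Condition (b) enters here: it supplies the cocompactness needed to make the assignment $\xi\mapsto\beta_n^\xi$ Borel and the above estimates uniform, exactly as cocompactness is used in the word-hyperbolic case.

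Finally I would assemble the two ingredients. Each $\Lambda_j$ is amenable as an abstract group, so $\Lambda_j\c(\partial T,\mu)$ is amenable; feeding the invariant system of means for $W$ into the extension argument of Proposition \ref{prop-ext-ame}, applied over the orbit decomposition $W=\bigsqcup_j\Gamma/\Lambda_j$, upgrades it to an invariant measurable system of means for the $\Gamma$-space $\Gamma$ over $(\partial T,\mu)$. Hence $\Gamma\c(\partial T,\mu)$ is amenable, and since $\mu$ was an arbitrary quasi-invariant probability measure, the action $\Gamma\c\partial T$ is amenable in the measure-theoretic sense. I expect the main obstacle to be the construction of $H$ in the middle step: producing a genuinely Borel, almost-equivariant family of means and controlling the equivariance error uniformly enough to pass to an exactly invariant limit. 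This is precisely the point that requires Germain's technique together with the cocompactness furnished by condition (b), whereas conditions (a) and (c) play the more routine roles of supplying the rays and the amenable stabilizers.
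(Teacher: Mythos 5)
Your construction of the asymptotically invariant family $\beta_n^\xi$ along the rays and the passage to an invariant measurable system of means is sound, and it is essentially the paper's first ingredient (the paper packages it as a map $H\colon \partial T\times V(T)\times\mathbb N\to\ell^1(V(T))$ and invokes Remark 3.2.6 of \cite{ar-book}, producing means on $\ell^\infty(V(T))$ rather than on pairs). The gap is in your final step. Proposition \ref{prop-ext-ame} applies to a \emph{transitive} $\Gamma$-space $\Gamma/\Lambda$, so before using it you must restrict the means $m_\xi$ to a single orbit of $W$, and that requires some orbit to receive positive mass. This is where condition (b) is actually needed, and it fails for your $W$: the set of ordered pairs at distance $\geq N$ (indeed already the set of pairs at distance exactly $N$, on which your $\beta_n^\xi$ are supported) decomposes into infinitely many $\Gamma$-orbits in general, because $T$ is not assumed locally finite. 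For the Bass-Serre tree of $\Gamma_1\ast_A\Gamma_2$ with $[\Gamma_i:A]=\infty$ --- the intended application --- the orbits of ordered pairs at distance $2$ with a fixed midpoint already correspond to double cosets in $A\backslash\Gamma_i/A$, of which there are infinitely many. With infinitely many orbits the limit mean can assign mass $0$ to every orbit, and then no normalization onto a homogeneous space $\Gamma/\Lambda_j$ is possible. (Your stated use of (b), for Borelness of $\xi\mapsto\beta_n^\xi$ and uniformity of the Reiter estimate, is misplaced: both hold without (b).)

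The paper avoids this by a two-step descent. First, the means are taken on $\ell^\infty(V(T))$; since $V(T)$ has only finitely many orbits by (b), the identity $1=\sum_{v\in\mathcal V}m_x(\chi_{\mathcal O(v)})$ forces some orbit to receive mass at least $1/\left|\mathcal V\right|$, so after normalizing one reduces, via Proposition \ref{prop-ext-ame}, to showing that each vertex stabilizer $\Gamma_v$ acts amenably on $\partial T$. Second, for $\Gamma_v$ one uses the radial projection $F$ of $\partial T$ onto the sphere of radius $N$ about $v$; this sphere may well have infinitely many $\Gamma_v$-orbits, but the associated system of means consists of Dirac masses $\delta_{F(x)}$, so it concentrates on a single orbit over each piece of the countable Borel partition $\partial T=\bigsqcup_u F^{-1}(\mathcal O_v(u))$, and Proposition \ref{prop-ext-ame} applies there with the amenable two-point stabilizers $\Gamma_{vu}$ supplied by (c). To repair your argument you would either have to reproduce this two-step structure (for instance, push your means forward to $V(T)$ along the first coordinate and then run the sphere argument for the vertex stabilizers), or else state and prove a strengthening of Proposition \ref{prop-ext-ame} valid for a non-transitive countable $\Gamma$-set all of whose point stabilizers act amenably; as written you do neither.
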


\begin{proof}
This proof follows the author's argument in Theorem 3.19 of \cite{kida-mcg}, where the action of the mapping class group on the boundary of the complex of curves is dealt with. 
To prove amenability of that action, we constructed a certain map $H$ associated to a $\delta$-hyperbolic graph satisfying properties (F1) and (F2), which were originally introduced by Bowditch \cite{bow-tight}. 
It is easy to check these properties for our tree $T$. 
(In the notation right before Lemma 2.8 in \cite{kida-mcg}, it is enough to put $\delta=\delta_0=0$, $\delta_1=1$, $\mathcal{L}_{T}(x, y)=\{l_x^y\}$ and $P_0=P_1=1$.) 
As discussed in Theorem 2.9 and Remark 2.2 in \cite{kida-mcg}, we can construct a map
\[H\colon \partial T\times V(T)\times \mathbb{N}\rightarrow \ell^1(V(T))\] 
satisfying the following properties, where $\ell^1(V(T))$ is the Banach space of all $\ell^1$-functions on $V(T)$:
\begin{itemize}
\item For any $v, u\in V(T)$ and any $n\in \mathbb{N}$, the function $H(\cdot , v, n)(u)$ on $\partial T$ is Borel.
\item For each $(x, v, n)\in \partial T\times V(T)\times \mathbb{N}$, the function $H(x, v, n)$ on $V(T)$ is non-negative and satisfies $\Vert H(x, v, n)\Vert_1=1$.
\item For each number $R>0$, the value $\Vert H(x, v, n)-H(x, u, n)\Vert_1$ converges to $0$ as $n\rightarrow \infty$, uniformly for $x\in \partial T$ and $v, u\in V(T)$ with $d(v, u)<R$.
\item The equality $H(\gamma x, \gamma v, n)=\gamma H(x, v, n)$ holds for any $(x, v, n)\in \partial T\times V(T)\times \mathbb{N}$ and any $\gamma \in \Gamma$, where the action $\Gamma \c \ell^1(V(T))$ is defined by $(\gamma \varphi)(v)=\varphi(\gamma^{-1}v)$ for any $\gamma \in \Gamma$, $\varphi \in \ell^1(V(T))$ and $v\in V(T)$.
\end{itemize}

Pick a probability measure $\mu$ on $\partial T$ quasi-invariant for the action $\Gamma \c \partial T$. 
Fix $v_0\in V(T)$.
For each $n\in \mathbb{N}$, we define a function $f_n$ on $\partial T\times V(T)$ by setting
\[f_n(x, v)=H(x, v_0, n)(v)\quad \textrm{for\ each}\ (x, v)\in \partial T\times V(T).\] 
We then have $\sum_{v\in V(T)}f_n(x, v)=1$ for any $x\in \partial T$ and
\[\lim_{n\rightarrow \infty}\sup_{x\in \partial T}\sum_{v\in V(T)}\vert f_n(x, v)-f_n(\gamma^{-1}x, \gamma^{-1}v)\vert =0\]
for any $\gamma \in \Gamma$. It follows from Remark 3.2.6 in \cite{ar-book} (see also Theorem A.2 in \cite{kida-mcg}) that there exists an invariant measurable system $\{ m_x\}_{x\in \partial T}$ of means for the $\Gamma$-space $V(T)$ over $(\partial T, \mu)$. 
Recall that $m_x$ is a mean on $\ell^{\infty}(V(T))$ for each $x\in \partial T$.

Let $\mathcal{V}\subset V(T)$ be a set of representatives of orbits for the action $\Gamma \c V(T)$. Since $\mathcal{V}$ is finite by condition (b), the equality
\[1=m_x(1)=m_x\left(\sum_{v\in \mathcal{V}}\chi_{\mathcal{O}(v)}\right)=\sum_{v\in \mathcal{V}}m_x(\chi_{\mathcal{O}(v)})\]
holds for $\mu$-a.e.\ $x\in \partial T$, where $\mathcal{O}(v)$ is the $\Gamma$-orbit in $V(T)$ containing $v$. 
We then have $\partial T=\bigcup_{v\in \mathcal{V}}X_v$, where for each $v\in \mathcal{V}$, we define a $\Gamma$-invariant Borel subset $X_v$ of $\partial T$ as
\[X_v=\{\, x\in \partial T\mid m_x(\chi_{\mathcal{O}(v)})\geq 1/\left|\mathcal{V}\right|\, \}.\]
To prove the theorem, it is enough to verify that the action of $\Gamma$ on $X_v$ equipped with the restriction of $\mu$ is amenable for each $v\in \mathcal{V}$ with $\mu(X_v)>0$. 
By normalization, we get an invariant measurable system of means for the $\Gamma$-space $\mathcal{O}(v)$ over $X_v$. 
Let $\Gamma_v$ denote the stabilizer of $v$ in $\Gamma$.
Since $\mathcal{O}(v)$ is identified with $\Gamma /\Gamma_v$ as a $\Gamma$-space, the theorem is a consequence of Proposition \ref{prop-ext-ame} and the following lemma, where condition (c) will be used.
\end{proof}

\begin{lem}
In the notation of Theorem \ref{thm-ame-action}, the action $\Gamma_v\c \partial T$ is amenable in a measure-theoretic sense for any $v\in V(T)$, where $\Gamma_v$ is the stabilizer of $v$ in $\Gamma$.
\end{lem}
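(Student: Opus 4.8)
The plan is to exploit condition (c) of Theorem \ref{thm-ame-action} by pushing $\partial T$ onto a countable $\Gamma_v$-set whose point stabilizers are amenable, and then to invoke Proposition \ref{prop-ext-ame}. Fix a probability measure $\mu$ on $\partial T$ quasi-invariant for $\Gamma_v \c \partial T$; I must show that this action is amenable, for an arbitrary such $\mu$.

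First I would construct a reduction map. Let $N$ be the integer from condition (c). For $x\in \partial T$ the geodesic ray $l_v^x$ from $v$ to $x$ is infinite, hence contains a unique vertex $\pi(x)$ at distance $N$ from $v$; this defines a map $\pi \colon \partial T\to V(T)$. It is Borel with respect to the topology generated by the sets $M(y, A)$, and since every $\gamma \in \Gamma_v$ fixes $v$ and carries $l_v^x$ to $l_v^{\gamma x}$, the map is $\Gamma_v$-equivariant, i.e.\ $\pi(\gamma x)=\gamma \pi(x)$. By construction $d(v, \pi(x))=N\geq N$, so condition (c) guarantees that the stabilizer $\Gamma_v\cap \Gamma_{\pi(x)}$ of the pair $(v, \pi(x))$ in $\Gamma_v$ is amenable.

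Next I would decompose according to the orbits hit by $\pi$. Let $\{ w_i\}_i$ be representatives of the (at most countably many) $\Gamma_v$-orbits meeting the image of $\pi$, set $\Lambda_i=\Gamma_v\cap \Gamma_{w_i}$, and put $X_i=\pi^{-1}(\Gamma_v w_i)$. Then $\partial T=\bigsqcup_i X_i$ is a partition into $\Gamma_v$-invariant Borel sets, so it suffices to show that $\Gamma_v\c (X_i, \mu)$ is amenable for each $i$ with $\mu(X_i)>0$; the invariant systems of means obtained on the pieces glue to one on $\partial T$. On $X_i$ the map $\pi$ takes values in $\Gamma_v w_i\cong \Gamma_v/\Lambda_i$, and assigning to $x$ the Dirac mean $\delta_{\pi(x)}$ on $\ell^\infty(\Gamma_v/\Lambda_i)$ gives an invariant measurable system of means for the $\Gamma_v$-space $\Gamma_v/\Lambda_i$ over $(X_i, \mu)$: measurability is clear, and the required identity $m_{\gamma^{-1}x}(\varphi(\gamma^{-1}x, \cdot))=m_x((\gamma \varphi)(x, \cdot))$ reduces, via $m_x=\delta_{\pi(x)}$, to the equivariance of $\pi$. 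Since $\Lambda_i$ is amenable as a group, its non-singular action $\Lambda_i\c (X_i, \mu)$ is amenable, and Proposition \ref{prop-ext-ame} then yields amenability of $\Gamma_v\c (X_i, \mu)$.

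The routine points are the Borel measurability of $\pi$ and the gluing of the systems of means over the countable partition; the place where genuine content enters is the use of condition (c), which is exactly what forces the subgroups $\Lambda_i$ fed into Proposition \ref{prop-ext-ame} to be amenable. I expect the only step demanding any care to be the verification that $\pi$ is a well-defined Borel, $\Gamma_v$-equivariant map into the discrete set $V(T)$; once this is established the reduction to Proposition \ref{prop-ext-ame} is immediate and I anticipate no further obstacle.
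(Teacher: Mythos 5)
Your proposal is correct and follows essentially the same route as the paper: the paper's map $F\colon \partial T\to S=\{u\mid d(u,v)=N\}$ is exactly your $\pi$, the paper likewise partitions $\partial T$ by the $\Gamma_v$-orbits in the image, uses the Dirac means $m_x=\delta_{F(x)}$ as the invariant measurable system for $\Gamma_v/\Gamma_{vu}$, and concludes via condition (c) and Proposition \ref{prop-ext-ame}. No substantive differences.
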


\begin{proof}
Fix $v\in V(T)$ and let $\mu$ be a probability measure on $\partial T$ quasi-invariant for the action $\Gamma_v\c \partial T$. 
We put
\[S=\{\, u\in V(T)\mid d(u, v)=N\, \},\]
where $N$ is the number in condition (c) of Theorem \ref{thm-ame-action}. 
The set $S$ is then $\Gamma_v$-invariant. 
For each $x\in \partial T$, let $F(x)$ be the unique point in the intersection of $S$ and the geodesic from $v$ to $x$. 
The map $F\colon \partial T\rightarrow S$ is then Borel and $\Gamma_v$-equivariant. 
Let $\mathcal{S}\subset S$ be a set of representatives of orbits for the action $\Gamma_v\c S$. 
For each $u\in \mathcal{S}$, we put
\[\Gamma_{vu}=\{\, \gamma \in \Gamma_v\mid \gamma u=u\, \},\quad Y_u=F^{-1}(\mathcal{O}_v(u)), \]
where $\mathcal{O}_v(u)$ is the $\Gamma_v$-orbit in $S$ containing $u$. The set $Y_u$ is a $\Gamma_v$-invariant Borel subset of $\partial T$ for each $u\in \mathcal{S}$, and the equality $\partial T=\bigsqcup_{u\in \mathcal{S}}Y_u$ holds. 
To prove the lemma, it is enough to show that the action $\Gamma_v\c (Y_u, \mu|_{Y_u})$ is amenable for each $u\in \mathcal{S}$ with $\mu(Y_u)>0$.

We claim that there is an invariant measurable system of means for the $\Gamma_v$-space $\Gamma_v/\Gamma_{vu}$ over $(Y_u, \mu|_{Y_u})$ for each $u\in \mathcal{S}$ with $\mu(Y_u)>0$. 
The lemma follows from this claim and Proposition \ref{prop-ext-ame} because $\Gamma_{vu}$ is amenable by condition (c) in Theorem \ref{thm-ame-action} and the action $\Gamma_{vu}\c Y_u$ is thus amenable. 
Note that $\Gamma_v/\Gamma_{vu}$ is identified with $\mathcal{O}_v(u)$ as a $\Gamma_v$-space. 
For each $x\in Y_u$, we define a mean $m_x$ on $\ell^{\infty}(\mathcal{O}_v(u))$ by the formula
\[m_x(\varphi)=\varphi(F(x))\quad \textrm{for\ each}\ \varphi \in \ell^{\infty}(\mathcal{O}_v(u)).\]
The family $\{ m_x\}_{x\in Y_u}$ is then an invariant measurable system of means for the $\Gamma_v$-space $\mathcal{O}_v(u)$ over $(Y_u, \mu|_{Y_u})$.
\end{proof}

\begin{cor}\label{cor-ame-action}
In the notation of Theorem \ref{thm-ame-action}, let $\partial_2T$ be the quotient space of $\partial T\times \partial T$ by the action of the symmetric group of two letters defined by exchanging the coordinates. 
Then the action $\Gamma \c \partial_2T$ induced by the diagonal action $\Gamma \c \partial T\times \partial T$ is amenable in a measure-theoretic sense.
\end{cor}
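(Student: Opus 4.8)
The plan is to deduce the corollary from Theorem \ref{thm-ame-action} in two stages: first upgrade amenability of $\Gamma \c \partial T$ to amenability of the diagonal action $\Gamma \c \partial T\times \partial T$, and then descend through the finite quotient map $q\colon \partial T\times \partial T\to \partial_2T$.

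For the first stage, fix any probability measure on $\partial T\times \partial T$ quasi-invariant for the diagonal action. The first-coordinate projection $p_1\colon \partial T\times \partial T\to \partial T$ is $\Gamma$-equivariant and Borel, and its pushforward of the chosen measure is a quasi-invariant probability measure on $\partial T$. By Theorem \ref{thm-ame-action} the action $\Gamma \c \partial T$ is amenable in a measure-theoretic sense, so the action of $\Gamma$ on $\partial T$ equipped with this pushforward measure is amenable. Theorem \ref{thm-ext-ame}, applied to $p_1$, then shows that $\Gamma$ acting on $\partial T\times \partial T$ with the original measure is amenable. As the measure was arbitrary, $\Gamma \c \partial T\times \partial T$ is amenable in a measure-theoretic sense.

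For the second stage, let $\sigma$ denote the coordinate-exchange involution, so that $\partial_2T=(\partial T\times \partial T)/\langle \sigma \rangle$, the map $q$ is the quotient map, $\sigma$ commutes with the diagonal $\Gamma$-action, and $q$ is $\Gamma$-equivariant. Given a quasi-invariant probability measure $\mu$ on $\partial_2T$, I would first build a $\sigma$-invariant lift: choosing a Borel section $s$ of $q$ and setting $\tilde\mu =\tfrac12(s_*\mu +\sigma_*s_*\mu)$ produces a measure with $q_*\tilde\mu =\mu$ whose conditional measures along the (at most) two-point fibers of $q$ are uniform. Because $\sigma$ commutes with $\Gamma$ and these conditionals are uniform, quasi-invariance of $\mu$ transfers to $\tilde\mu$. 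By the first stage, $\Gamma \c (\partial T\times \partial T, \tilde\mu)$ is amenable, yielding an invariant measurable system of means $\{ m_p\}$ for the $\Gamma$-space $\Gamma$ over $(\partial T\times \partial T, \tilde\mu)$. I would then average over each fiber, defining $n_z=\tfrac12(m_p+m_{\sigma p})$ for $p\in q^{-1}(z)$; this is independent of the choice of $p$, and using the Borel section $s$ it is $\mu$-measurable.

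The step I expect to be the main obstacle is this last descent, precisely because the transfer theorem available (Theorem \ref{thm-ext-ame}) only carries amenability from the target of an equivariant map to its source, which is the wrong direction for a quotient. One must therefore verify by hand that $\{ n_z\}$ is a genuine invariant measurable system of means over $(\partial_2T, \mu)$: invariance of the $n_z$ follows from invariance of the $m_p$ together with the facts that $\sigma$ commutes with $\Gamma$ and that $q$ maps fibers bijectively to fibers, while measurability reduces, via pullback of test functions along $q\times \mathrm{id}$, to the measurability already built into $\{ m_p\}$. Once this is checked, amenability of $\Gamma \c (\partial_2T, \mu)$ follows, and since $\mu$ was arbitrary, $\Gamma \c \partial_2T$ is amenable in a measure-theoretic sense.
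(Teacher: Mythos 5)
Your proof is correct and follows essentially the same route as the argument the paper delegates to its references (Lemma 5.2 in Adams and Lemma 4.32 in \cite{kida-mcg}): amenability of the diagonal action on $\partial T\times \partial T$ is obtained from Theorem \ref{thm-ext-ame} applied to the first-coordinate projection, and one then descends through the two-to-one quotient by symmetrizing the lifted measure and fiberwise averaging the means. Your explicit verification of the descent step, which the paper omits, is sound.
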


This is a direct consequence of Theorem \ref{thm-ame-action}. 
We refer to Lemma 5.2 in \cite{adams} or Lemma 4.32 in \cite{kida-mcg} for a precise proof.


\section{Class $\mathscr{F}$}\label{sec-c-f}

In this section, we introduce Class $\mathscr{F}$ of discrete groups and discuss its basic properties.
We also present examples of groups in that class. 
Let $T$ be a connected simplicial tree with at most countably many simplices. 
In what follows, we mean by a tree one satisfying these conditions unless otherwise stated. 
Let us denote by $\aut^*(T)$ the simplicial automorphism group of $T$ equipped with the standard Borel structure arising from the pointwise convergence topology. 
When $\partial T$ is non-empty, we define $\partial_2T$ to be the quotient space of $\partial T\times \partial T$ by the action of the symmetric group of two letters defined by exchanging the coordinates. 
We denote by $V(T)$ and $S(T)$ the sets of vertices and simplices of $T$, respectively.
Unless there is a confusion, $V(T)$ is often identified with the set of 0-simplices of $T$.

\begin{defn}\label{defn-c-f}
We say that a discrete group $\Gamma$ belongs to Class $\mathscr{F}$ if $\Gamma$ is infinite and the following statement holds: Let $\Gamma \c (X, \mu)$ be a measure-preserving action on a standard probability space and $\mathcal{G}$ the associated groupoid.
Suppose that we have a tree $T$ with $\partial T\neq \emptyset$ and a Borel cocycle $\rho \colon \Gamma \times X\rightarrow \aut^*(T)$ satisfying the following three conditions:
\begin{enumerate}
\item[(I)] For any three mutually distinct vertices $v_1, v_2, v_3\in V(T)$, the subgroupoid of $\cal{G}$ defined as
\[\{\, (\gamma, x)\in \mathcal{G}\mid \rho(\gamma, x)v_i=v_i \ \textrm{for\ any}\ i=1, 2, 3\, \}\]
is finite.
\item[(II)] For any Borel map $\phi \colon X\rightarrow \partial_2T$, the subgroupoid of $\cal{G}$ defined as
\[\{\, (\gamma, x)\in \mathcal{G}\mid \phi(\gamma x)=\rho(\gamma, x)\phi(x)\, \}\]
is amenable.
\item[(III)] For any $\gamma \in \Gamma$ and a.e.\ $x\in X$, the automorphism $\rho(\gamma, x)$ of $T$ has no inversion.
\end{enumerate}
Then there exists a $(\mathcal{G}, \rho)$-invariant Borel map $\varphi \colon X\rightarrow V(T)$.
\end{defn}

We now explain a motivation for introducing Class $\mathscr{F}$. 
Let $\Gamma =\Gamma_1\ast_A\Gamma_2$ be an amalgamated free product of discrete groups, and let $T$ be the Bass-Serre tree associated to the decomposition of $\Gamma$.
Suppose that we have a self-coupling of $\Gamma$ and the ME cocycle $\alpha \colon \Gamma \times X\rightarrow \Gamma$ associated with it, where $X$ is a fundamental domain for the action of $\{ e\} \times \Gamma$ on that coupling (see Section \ref{sec-coup-rigid} for these terms). 
It is shown that if $\Gamma_1$ and $\Gamma_2$ belong to $\mathscr{F}$ and if $A$ is almost malnormal in both $\Gamma_1$ and $\Gamma_2$, then for each $i=1, 2$, there exists a Borel map $\varphi_i\colon X\rightarrow V(T)$ with
\[\alpha(\gamma, x)\varphi_i(x)=\varphi_i(\gamma \cdot x)\quad \textrm{for\ any}\ \gamma \in \Gamma_i \textrm{\ and\ a.e.\ }x\in X.\] 
These maps $\varphi_1$ and $\varphi_2$ play an important role in the proof of coupling rigidity of $\Gamma$ with respect to $\aut^*(T)$, discussed in Section \ref{sec-coup-rigid}.

\subsection{Basic properties of Class $\mathscr{F}$}\label{subsec-c-f-def}

We show a few consequences of the existence of the $(\cal{G}, \rho)$-invariant Borel map $\varphi$ in Definition \ref{defn-c-f} and give hereditary properties of groups in Class $\mathscr{F}$. 
If there is such an invariant Borel map into $V(T)$, then one can construct a canonical invariant Borel map valued in $S(T)$ as follows.

\begin{lem}\label{lem-inv-s}
Let $\Gamma \c (X, \mu)$ be a measure-preserving action of a discrete group on a standard probability space and $\mathcal{G}$ the associated groupoid. 
Suppose that we have
\begin{itemize}
\item a tree $T$ with $\partial T\neq \emptyset$ and a Borel cocycle $\rho \colon \Gamma \times X\rightarrow \aut^*(T)$ satisfying condition (I) in Definition \ref{defn-c-f}; and
\item a Borel subset $A$ of $X$ with positive measure and a subgroupoid $\mathcal{S}$ of $(\mathcal{G})_A$ of infinite type.
\end{itemize}
If there exists an $\cal{S}$-invariant Borel map from $A$ into $V(T)$, then the following assertions hold:
\begin{enumerate}
\item There exists an essentially unique, $\cal{S}$-invariant Borel map $\varphi_0\colon A\rightarrow S(T)$ such that for any Borel subset $B\subset A$ with positive measure and any $\cal{S}$-invariant Borel map $\varphi \colon B\rightarrow V(T)$, the inclusion $\varphi(x)\subset \varphi_0(x)$ holds for a.e.\ $x\in B$.
\item If $\cal{T}$ is a subgroupoid of $(\mathcal{G})_A$ with $\mathcal{S}\lhd \mathcal{T}$, then the map $\varphi_0$ in assertion (i) is $\cal{T}$-invariant.
\end{enumerate}
\end{lem}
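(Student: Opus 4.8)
The plan is to first prove a rigid geometric constraint, namely that any two $\mathcal{S}$-invariant vertex maps stay within distance $1$, then to build $\varphi_0$ as the ``smallest simplex containing all $\mathcal{S}$-invariant vertices'' by an exhaustion, and finally to deduce assertion (ii) from the purely $\mathcal{S}$-theoretic nature of this construction together with normality.

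For the key bound, let $\varphi_1\colon A\to V(T)$ be the given invariant map and suppose $\varphi,\psi\colon B\to V(T)$ are $\mathcal{S}$-invariant on a positive-measure $B\subset A$. For each $j$ the $j$-th vertex $w_j(x)$ on the geodesic $l_{\varphi(x)}^{\psi(x)}$ is a Borel function of $x$, and since $\rho(g)$ is a simplicial automorphism carrying $\varphi(s(g)),\psi(s(g))$ to $\varphi(r(g)),\psi(r(g))$, it preserves this geodesic vertex-by-vertex; hence each $w_j$ is again $\mathcal{S}$-invariant. If $d(\varphi,\psi)\geq 2$ on a positive-measure set, then $\varphi,w_1,\psi$ give three invariant maps which, on some positive-measure subset $C$ (their ranges being countable), take three fixed distinct values $v_1,v_2,v_3$; then $(\mathcal{S})_C$ lies inside the stabilizer groupoid of $\{v_1,v_2,v_3\}$, which is finite by condition (I) of Definition \ref{defn-c-f}. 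As the ambient action is measure-preserving, $(\mathcal{S})_C$ is still of infinite type by recurrence, a contradiction. Thus $d(\varphi(x),\psi(x))\leq 1$ a.e., so all $\mathcal{S}$-invariant vertices at a.e.\ $x$ lie in one closed simplex of diameter $\leq 1$; in particular, besides $\varphi_1(x)$ at most one further vertex can occur, since two distinct neighbours of $\varphi_1(x)$ would be at distance $2$.

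To construct $\varphi_0$ and prove (i), I would run an exhaustion: let $A'$ be an essentially maximal $\mathcal{S}$-invariant Borel subset of $A$ carrying an $\mathcal{S}$-invariant Borel map that differs from $\varphi_1$, realized by a countable family $\{(B_n,\psi_n)\}$; Lemma \ref{lem-inv-ext}, applied to $\rho|_{\mathcal{S}}$, extends any witness to its saturation, and the disagreement with $\varphi_1$ propagates because $\rho(g)$ is bijective, so $A'$ is $\mathcal{S}$-saturated and $A''=A\setminus A'$ is as well. On $A''$ every invariant vertex map agrees with $\varphi_1$ a.e.\ (by maximality of the exhaustion), and I set $\varphi_0=\varphi_1$ there, a $0$-simplex. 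On $A'$ the bound above shows the ``other vertex'' $v(x):=\psi_n(x)$ is independent of $n$ and defines an $\mathcal{S}$-invariant Borel map, and I set $\varphi_0(x)$ to be the edge $\{\varphi_1(x),v(x)\}$, a $1$-simplex. Then $\varphi_0$ is $\mathcal{S}$-invariant since $\rho(g)$ carries the pair $(\varphi_1,v)$ at $s(g)$ to the pair at $r(g)$; the inclusion $\varphi(x)\subset\varphi_0(x)$ for any invariant $\varphi$ follows from the $A''/A'$ dichotomy; and essential uniqueness holds because any competitor must contain both $\varphi_1(x)$ and, on $A'$, $v(x)$ as faces, which in a tree pins down the simplex.

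For (ii), with $\mathcal{S}\lhd\mathcal{T}\subset(\mathcal{G})_A$, I would reduce $\mathcal{T}$-invariance of $\varphi_0$ to the statement that, for each map $\phi\in{\rm End}_{\mathcal{T}}(\mathcal{S})$ from the normality data, one has $\rho(\phi(x))\varphi_0(x)=\varphi_0(r(\phi(x)))$ a.e. Indeed, a general $g\in\mathcal{T}$ can be written as $g=\phi_n(r(g))^{-1}h$ with $h\in\mathcal{S}$, and $\mathcal{S}$-invariance of $\varphi_0$ then yields the claim after applying this equality. To prove it for a fixed $\phi$, decompose ${\rm dom}(\phi)$ into countably many Borel pieces on which $\phi$ is a bisection, and transport $\varphi_0$ by $\rho(\phi(\cdot))$ to a simplex-valued map $\zeta$ on the image. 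Property (b) of ${\rm End}_{\mathcal{T}}(\mathcal{S})$ says conjugation by $\phi$ preserves membership in $\mathcal{S}$, and a direct computation with this then shows $\zeta$ is $\mathcal{S}$-invariant and inherits the maximality property of (i); by essential uniqueness, $\zeta=\varphi_0$ on the image, which is exactly the desired equality. The main obstacle is really Step 1, where the distance bound rests on combining condition (I) with the recurrence fact that restrictions of an infinite-type measure-preserving groupoid remain of infinite type; the bookkeeping in (ii), decomposing ${\rm End}$-maps into bisections and verifying that $\zeta$ inherits invariance and maximality, is the most technical part but becomes routine once the uniqueness in (i) is in hand.
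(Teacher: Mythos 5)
Your proposal is correct and follows essentially the same route as the paper: both rest on the key fact that condition (I) together with the infinite type of $\mathcal{S}$ (preserved under restriction by recurrence) forbids three distinct $\mathcal{S}$-invariant vertex values, both realize $\varphi_0$ by a maximality argument (the paper maximizes the measure of the edge-locus over invariant simplex-valued maps, you exhaust over invariant vertex maps after a distance-$\leq 1$ bound --- the two are equivalent), and both prove (ii) by transporting $\varphi_0$ along the bisections in ${\rm End}_{\mathcal{T}}(\mathcal{S})$ and invoking the maximality/uniqueness from (i). Your write-up in fact supplies the details the paper leaves implicit in its one-line justification of (i).
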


\begin{proof}
The idea of the following construction of $\varphi_0$ appears in Lemma 3.2 of \cite{adams}. 
Let $I$ be the set of all $\cal{S}$-invariant Borel maps from $A$ into $S(T)$. 
By assumption, $I$ is non-empty. 
For each $\varphi \in I$, we set
\[S_{\varphi}=\{\, x\in A\mid \varphi(x)\in S(T)\setminus V(T)\,\}.\]
One can find $\varphi_0\in I$ with $\mu(S_{\varphi_0})=\sup_{\varphi \in I}\mu(S_{\varphi})$. It follows from condition (I) for $\rho$ in Definition \ref{defn-c-f} that this $\varphi_0$ satisfies the desired property in assertion (i).

We give only a sketch of the proof of assertion (ii) because this is a verbatim translation of the proof of Lemma 6.7 in \cite{kida-survey} after exchanging symbols appropriately.
Let us denote by $[[\cal{T}]]_{\cal{S}}$ the set of all maps $\phi$ in ${\rm End}_{\cal{T}}(\cal{S})$ such that the composition $r\circ \phi \colon {\rm dom}(\phi)\rightarrow X$ is injective on a conull Borel subset of ${\rm dom}(\phi)$, where $r\colon \cal{G}\rightarrow X$ is the range map for $\cal{G}$.
For each $\phi \in [[\cal{T}]]_{\cal{S}}$, one can prove that the map
\[{\rm dom}(\phi)\ni x\mapsto \rho(\phi(x)^{-1})\varphi_0(r(\phi(x)))\in S(T)\]
is $\mathcal{S}$-invariant.
It follows from the property of $\varphi_0$ stated in assertion (i) that the inclusion $\rho(\phi(x)^{-1})\varphi_0(r(\phi(x)))\subset \varphi_0(x)$ holds for a.e.\ $x\in {\rm dom}(\phi)$.
By considering the ``inverse'' of $\phi$, i.e., the map in $[[\mathcal{T}]]_{\mathcal{S}}$ defined by
\[r\circ \phi({\rm dom}(\phi))\ni y\mapsto \phi((r\circ \phi)^{-1}(y))^{-1}\in \mathcal{T},\]
one can prove the converse inclusion. 
The reader should consult the cited reference for more details.
Since $\mathcal{T}$ is generated by $[[\mathcal{T}]]_{\mathcal{S}}$, the map $\varphi_0$ is $\mathcal{T}$-invariant.
\end{proof}

\begin{prop}\label{prop-basic-c}
Class $\mathscr{F}$ satisfies the following properties:
\begin{enumerate}
\item Let $\Gamma$ and $\Lambda$ be discrete groups with $\Lambda <\Gamma$ and $[\Gamma :\Lambda]<\infty$. 
Then $\Gamma \in \mathscr{F}$ if and only if $\Lambda \in \mathscr{F}$.
\item If $\Gamma$ and $\Lambda$ are discrete groups with $\Lambda \lhd \Gamma$ and $\Lambda \in \mathscr{F}$, then we have $\Gamma \in \mathscr{F}$.
\item No amenable group belongs to $\mathscr{F}$.
\end{enumerate}
\end{prop}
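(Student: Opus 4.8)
\emph{Overall plan.} I would prove the three assertions in the order (ii), (i), (iii). The ``going up'' statement (ii) is handled by restriction to a subgroupoid followed by Lemma~\ref{lem-inv-s}; assertion (i) is reduced to (ii) by an induction and a normal-core trick; and (iii) is settled by an explicit counterexample coming from the Connes--Feldman--Weiss theorem.

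\emph{Assertion (ii).} Let $\Gamma\c(X,\mu)$ be measure preserving with groupoid $\mathcal{G}$, and let $T$, $\rho$ satisfy conditions (I)--(III). Put $\mathcal{N}=\Lambda\ltimes X$, a normal subgroupoid of $\mathcal{G}$ that is of infinite type because $\Lambda$ is infinite. The restricted cocycle still satisfies (I)--(III): for each choice of data the relevant subgroupoid for $\mathcal{N}$ is contained in the corresponding one for $\mathcal{G}$, so it is finite (for (I)) or amenable (for (II)), while (III) is inherited literally. Since $\Lambda\in\mathscr{F}$, there is an $\mathcal{N}$-invariant Borel map $\psi\colon X\to V(T)$. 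Now Lemma~\ref{lem-inv-s}(i) yields the canonical $\mathcal{N}$-invariant simplex map $\varphi_0\colon X\to S(T)$, and Lemma~\ref{lem-inv-s}(ii), using $\mathcal{N}\lhd\mathcal{G}$, promotes $\varphi_0$ to a $\mathcal{G}$-invariant map into $S(T)$. The remaining---and, I expect, the hardest---step is to pass from $S(T)$ to $V(T)$: on the $\mathcal{G}$-invariant set where $\varphi_0$ takes edge values, the two endpoints furnish two $\mathcal{N}$-invariant vertex maps that $\mathcal{G}$ may only interchange, and selecting one $\mathcal{G}$-invariantly is obstructed by a $\mathbb{Z}/2$-valued orientation cocycle on the restricted groupoid. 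This is exactly where condition (III) is indispensable: because no $\rho(\gamma,x)$ inverts an edge, the orientation cocycle is trivial on the isotropy, and after passing to the barycentric subdivision (on which (III) holds automatically) one should be able to descend to a genuine $\mathcal{G}$-invariant map into $V(T)$.

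\emph{Assertion (i).} For the direction $\Gamma\in\mathscr{F}\Rightarrow\Lambda\in\mathscr{F}$ I would induce: given $\Lambda$-data $(\Lambda\c(X,\mu),\sigma,T)$ obeying (I)--(III), form the induced action of $\Gamma$ on $(\Gamma/\Lambda)\times X$ with the normalized finite measure and extend $\sigma$ to a $\Gamma$-cocycle. Conditions (I)--(III) transfer because the induced space is, $\Gamma$-equivariantly and locally, a finite disjoint union of copies of the $\Lambda$-data; applying $\Gamma\in\mathscr{F}$ gives a $\Gamma$-invariant vertex map, whose restriction to one copy of $X$ is the desired $\Lambda$-invariant vertex map, and here no edge issue arises since a vertex map is produced directly. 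For the converse $\Lambda\in\mathscr{F}\Rightarrow\Gamma\in\mathscr{F}$, replace $\Lambda$ by its normal core $\Lambda_0=\bigcap_{\gamma\in\Gamma}\gamma\Lambda\gamma^{-1}$, which is of finite index and normal in $\Gamma$ and contained in $\Lambda$; the previous paragraph gives $\Lambda_0\in\mathscr{F}$, and then (ii) gives $\Gamma\in\mathscr{F}$.

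\emph{Assertion (iii).} Finite groups are excluded because membership in $\mathscr{F}$ requires infiniteness, so let $\Gamma$ be infinite amenable. Take a free ergodic measure-preserving action $\Gamma\c(X,\mu)$, for instance a Bernoulli action; since $\Gamma$ is infinite amenable, its orbit equivalence relation is hyperfinite by the Connes--Feldman--Weiss theorem, hence coincides with the orbit relation of a single aperiodic automorphism $T$ of $X$. The resulting index cocycle, sending $(x,T^n x)$ to $n$, is not a coboundary, since $\varphi\circ T=\varphi+1$ is impossible for a measurable integer-valued $\varphi$ on a finite measure space. Transporting it to the group gives a cocycle $\rho(\gamma,x)$, equal to the index of $\gamma x$ relative to $x$, valued in the translation subgroup $\mathbb{Z}\le\aut^*(L)$ of the bi-infinite line $L$, for which $\partial L\neq\emptyset$. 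Condition (III) holds because translations never invert an edge; condition (I) holds because a nonzero translation fixes no vertex, so the stabilizer subgroupoid reduces to the units and is finite; condition (II) holds automatically because $\Gamma$ amenable makes $\mathcal{G}$ amenable, whence every subgroupoid is amenable. A $(\mathcal{G},\rho)$-invariant map $X\to V(L)=\mathbb{Z}$ would exhibit $\rho$ as a coboundary, contrary to the construction; hence $\Gamma\notin\mathscr{F}$.
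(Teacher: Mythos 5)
Your proposal is correct and follows essentially the same route as the paper: induction of actions for the forward direction of (i), the normal core combined with (ii) for the converse, Lemma \ref{lem-inv-s} together with normality for (ii), and the Connes--Feldman--Weiss $\mathbb{Z}$-cocycle on the line for (iii). The only loose point is the descent from $S(T)$ to $V(T)$ in (ii): rather than passing to the barycentric subdivision (whose vertices include edge-midpoints, which do not lie in $V(T)$), the paper observes that on each level set $\varphi_0^{-1}(s)$ with $s$ an edge, condition (III) forces the restricted groupoid to fix both endpoints of $s$, so a constant choice of endpoint is invariant there and Lemma \ref{lem-inv-ext} extends it to the saturation.
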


\begin{proof}
Let $\Gamma$ and $\Lambda$ be discrete groups with $\Lambda <\Gamma$ and $[\Gamma :\Lambda]<\infty$.
Assuming that $\Gamma$ belongs to $\mathscr{F}$, we prove that so does $\Lambda$.
Pick a measure-preserving action $\Lambda \c (Y, \nu)$ on a standard probability space and a Borel cocycle $\tau \colon \Lambda \times Y\rightarrow \aut^*(T)$ satisfying the three conditions in Definition \ref{defn-c-f}. 
Let $\Gamma \c (X, \mu)$ be the action of $\Gamma$ induced from the action $\Lambda \c (Y, \nu)$. 
Choose representatives $\gamma_1, \gamma_2, \ldots, \gamma_{I}$ of left cosets of $\Lambda$ in $\Gamma$ with $\gamma_1=e$ and $I=[\Gamma :\Lambda]$.
Note that we have $X=\bigsqcup_{i=1}^{I}\gamma_iY$. 
Let us define a Borel cocycle $\rho \colon \Gamma \times X\rightarrow \aut^*(T)$ in the following way: Given $\gamma \in \Gamma$, $y\in Y$ and $i\in \{ 1,\ldots, I\}$, we can uniquely find elements $j\in \{ 1,\ldots, I\}$ and $\lambda \in \Lambda$ with $\gamma \gamma_i=\gamma_j\lambda$ and define $\rho(\gamma, \gamma_iy)=\tau(\lambda, y)$. 
In particular, we have $\rho(\gamma_i, y)=e$ for any $i\in \{ 1,\ldots, I\}$ and $y\in Y$.

We now prove that $\rho$ is in fact a Borel cocycle.
Given any $\gamma, \delta \in \Gamma$ and $x\in X$, we uniquely choose elements $i, j, k\in \{ 1,\ldots, I\}$, $y\in Y$ and $\lambda, \xi \in \Lambda$ with
\[x=\gamma_iy,\quad \delta \gamma_i=\gamma_j\lambda,\quad \gamma \gamma_j=\gamma_k\xi.\]
We thus have
\[\delta x=\delta \gamma_iy=\gamma_j\lambda y,\quad \gamma \delta \gamma_i=\gamma \gamma_j\lambda =\gamma_k\xi \lambda.\]
By definition, the equalities
\[\rho(\gamma, \delta x)=\tau(\xi, \lambda y),\quad \rho(\delta, x)=\tau(\lambda, y),\quad \rho(\gamma \delta, x)=\tau(\xi \lambda, y)\]
holds. 
It follows that $\rho$ is a Borel cocycle. 

The cocycle $\rho$ satisfies the three conditions in Definition \ref{defn-c-f} for the action $\Gamma \c (X, \mu)$. 
There exists a $(\Gamma \ltimes X, \rho)$-invariant Borel map $\varphi \colon X\rightarrow V(T)$ because $\Gamma$ belongs to $\mathscr{F}$. 
The restriction of $\varphi$ to $Y$ is then $(\Lambda \ltimes Y, \tau)$-invariant. 
It follows that $\Lambda$ belongs to $\mathscr{F}$.

The converse of assertion (i) is a direct consequence of the fact shown in the previous paragraph and assertion (ii). 
We now prove assertion (ii). 
Let $\Gamma$ and $\Lambda$ be discrete groups with $\Lambda \lhd \Gamma$ and $\Lambda \in \mathscr{F}$.
Pick a measure-preserving action $\Gamma \c (X, \mu)$ on a standard probability space and a Borel cocycle $\rho \colon \Gamma \times X\rightarrow \aut^*(T)$ as in Definition \ref{defn-c-f}. 
Since $\Lambda$ belongs to $\mathscr{F}$, there exists a $(\Lambda \ltimes X)$-invariant Borel map from $X$ into $V(T)$. 
Let $\varphi_0\colon X\rightarrow S(T)$ be the $(\Lambda \ltimes X)$-invariant Borel map given in Lemma \ref{lem-inv-s}. 
It follows from $\Lambda \lhd \Gamma$ that $\varphi_0$ is also $(\Gamma \ltimes X)$-invariant. 
We can then construct a $(\Gamma \ltimes X)$-invariant Borel map from $X$ into $V(T)$ by using Lemma \ref{lem-inv-ext} and condition (III) in Definition \ref{defn-c-f}. 
It thus turns out that $\Gamma$ belongs to $\mathscr{F}$.

Finally, we prove assertion (iii). 
Let $\Gamma$ be an infinite amenable group and pick an ergodic f.f.m.p.\ action $\Gamma \c (X, \mu)$. 
By a theorem due to Ornstein-Weiss \cite{ow} or Connes-Feldman-Weiss \cite{cfw}, there exists an ergodic f.f.m.p.\ action $\mathbb{Z}\c (X, \mu)$ such that the two groupoids $\Gamma \ltimes X$ and $\mathbb{Z}\ltimes X$ are isomorphic with respect to the identity on $X$. 
Let $T$ be the Cayley graph of $\mathbb{Z}$ with respect to the set of generators $\{ \pm 1\}$, and define a cocycle $\rho \colon \mathbb{Z}\times X\rightarrow \aut^*(T)$ so that for any $n\in \mathbb{Z}$ and $x\in X$, $\rho(n, x)$ is the translation on $T$ defined by $m\mapsto m+n$. 
The cocycle obtained by composing $\rho$ with the isomorphism $\Gamma \ltimes X \simeq \mathbb{Z}\ltimes X$ satisfies the three conditions in Definition \ref{defn-c-f}, while there exists no $(\Gamma \ltimes X)$-invariant Borel map from $X$ into $V(T)$. 
We then see that $\Gamma$ does not belong to $\mathscr{F}$.
\end{proof}


\subsection{Natural maps associated to trees}\label{subsec-nat-maps}

Let $T$ be a connected simplicial tree with at most countably many simplices. 
In this subsection, we collect basic properties of invariant Borel maps valued in the space of probability measures on $\Delta T$. 
We first review some natural maps associated to geometry of $T$. 
For a compact Hausdorff space $Z$, we denote by $M(Z)$ the space of probability measures on $Z$. 
For each Borel subset $W$ of $Z$, we set
\[M(W)=\{ \, \mu\in M(Z)\mid \mu(W)=1\,\},\]
which is a Borel subset of $M(Z)$.

\medskip

\noindent {\bf Map $C\colon V_f(T)\rightarrow S(T)$.} Let $V_f(T)$ be the set of all non-empty finite subsets of $V(T)$, on which $\aut^*(T)$ naturally acts. 
The map $C\colon V_f(T)\rightarrow S(T)$ is defined to be the $\aut^*(T)$-equivariant map associating the barycenter to each element of $V_f(T)$ in the following manner: For each $F\in V_f(T)$, let $\bar{F}\in V_f(T)$ be the set of vertices of the minimal connected subtree of $T$ containing $F$, and put $X_0=\bar{F}$. 
Let us define a subset $X_{i+1}$ of $V(T)$ as $X_i\setminus \partial X_i$ inductively, where $\partial X_i$ denotes the set of all vertices in $X_i$ whose neighbors in $X_i$ consist of exactly one vertex. 
Since $X_0$ is finite, there exists a unique number $i$ such that $X_i$ consists of either a single vertex or two adjacent vertices. 
This $X_i$ is denoted by $C(F)\in S(T)$.
The map $C$ is clearly $\aut^*(T)$-equivariant.     
\medskip

\noindent {\bf Map $\delta C\colon \delta T\rightarrow \delta V(T)$.} Suppose that $\partial T$ contains at least three points. 
We set
\[\delta T=\{\, (x, y, z)\in (\partial T)^{3}\mid x\neq y\neq z\neq x\, \},\]
on which $\aut^*(T)$ acts so that $f(x, y, z)=(f(x), f(y), f(z))$ for any $f\in \aut^*(T)$ and $(x, y, z)\in \delta T$. 
Let $\delta V(T)$ denote the set of all finite subsets of $V(T)$ whose cardinalities are at least three, on which $\aut^*(T)$ naturally acts. 
We define a Borel map $\delta C\colon \delta T\rightarrow \delta V(T)$ so that for each $(x, y, z)\in \delta T$, the set $\delta C(x, y, z)$ consists of all vertices in the union $l_x^y\cup l_y^z\cup l_z^x$ whose distances from the single point in the intersection $l_x^y\cap l_y^z\cap l_z^x$ are at most one.

\medskip

\noindent {\bf Map $M\colon M(\delta V(T))\rightarrow \delta V(T)$.} This is defined to be an $\aut^*(T)$-equivariant Borel map associating to each $\ell^1$-function $f\in M(\delta V(T))$ the union of the points of $\delta V(T)$ on which $f$ takes its maximal value.
\medskip

\noindent {\bf Map $G\colon \partial_2T\times S(T)\rightarrow \delta V(T)$.} This is an $\aut^*(T)$-equivariant Borel map defined so that for any $a\in \partial_2T$ and $s\in S(T)$, the set $G(a, s)$ consists of all vertices in the geodesic(s) connecting a point of $a$ with $s$ whose distances from $s$ are at least one and at most three.
\medskip

Suppose that we have a measure-preserving action $\Gamma \c (X, \mu)$ of a discrete group on a standard probability space and a Borel cocycle $\rho \colon \Gamma \times X\rightarrow \aut^*(T)$ satisfying condition (I) in Definition \ref{defn-c-f}. 
Given a subgroupoid $\mathcal{S}$ of $\Gamma \ltimes X$ of infinite type, we study $\cal{S}$-invariant Borel maps into $M(\Delta T)$ in the following lemmas.
For each measure $\mu \in M(\Delta T)$, we denote by ${\rm supp}(\mu)$ the support of $\mu$.

\begin{lem}\label{lem-two-supp}
Let $\Gamma \c (X, \mu)$ be a measure-preserving action of a discrete group on a standard probability space and $\mathcal{G}$ the associated groupoid. 
Suppose that we have
\begin{itemize}
\item a tree $T$ with $\partial T\neq \emptyset$ and a Borel cocycle $\rho \colon \Gamma \times X\rightarrow \aut^*(T)$ satisfying condition (I) in Definition \ref{defn-c-f};
\item a Borel subset $A$ of $X$ with positive measure and a subgroupoid $\mathcal{S}$ of $(\mathcal{G})_A$ of infinite type; and
\item an $\cal{S}$-invariant Borel map $\varphi \colon A\rightarrow M(\Delta T)$.
\end{itemize}
Then the following assertions hold:
\begin{enumerate}
\item If $\varphi(x)(\partial T)=1$ for a.e.\ $x\in A$, then ${\rm supp}(\varphi(x))$ consists of at most two points of $\partial T$ for a.e.\ $x\in A$. 
The map $\varphi$ thus induces an $\cal{S}$-invariant Borel map from $A$ into $\partial_2T$.
\item There exists an essentially unique Borel partition $A=A_1\sqcup A_2$ such that $\varphi(x)(\partial T)=1$ for a.e.\ $x\in A_1$ and $\varphi(y)(V(T))=1$ for a.e.\ $y\in A_2$.
\end{enumerate}
\end{lem}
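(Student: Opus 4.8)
The plan is to rule out large supports by manufacturing an $\mathcal{S}$-invariant map into $\delta V(T)$ and contradicting condition (I) in Definition \ref{defn-c-f} for a groupoid of infinite type. For assertion (i), I would put
\[A'=\{\, x\in A\mid {\rm supp}(\varphi(x))\ \textrm{has\ at\ least\ three\ points}\,\},\]
which is Borel and, because each $\rho(g)$ is a homeomorphism of $\Delta T$ preserving $\partial T$, is $\mathcal{S}$-invariant. Assume for contradiction that $\mu(A')>0$ and work on $A'$. For $x\in A'$ the measure $\varphi(x)^{\otimes 3}$ charges $\delta T$, so pushing its restriction to $\delta T$ forward by $\delta C$ and normalizing gives a Borel map $x\mapsto \nu_x\in M(\delta V(T))$; applying $M$ yields $\Psi\colon A'\to \delta V(T)$, $\Psi(x)=M(\nu_x)$. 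Since $\delta C$ and $M$ are $\aut^*(T)$-equivariant and $\varphi$ is $\mathcal{S}$-invariant, $\Psi$ is an $\mathcal{S}$-invariant Borel map into finite vertex sets, each of cardinality at least three and, by the definition of $\delta C$, of uniformly bounded size.

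The heart of the argument is to see that such a $\Psi$ cannot exist. I would consider the subgroupoid
\[\mathcal{K}=\{\, g\in (\mathcal{S})_{A'}\mid \rho(g)u=u\ \textrm{for\ all}\ u\in \Psi(s(g))\,\}.\]
Each $g\in \mathcal{K}$ satisfies $\Psi(r(g))=\Psi(s(g))$, so $\mathcal{K}$ preserves the countable Borel partition of $A'$ according to the value of $\Psi$; on the piece where $\Psi\equiv F$, the groupoid $\mathcal{K}$ fixes the three smallest vertices of $F$ pointwise (using a fixed Borel enumeration of $V(T)$) and is therefore finite by condition (I). Hence $\mathcal{K}$ is finite. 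Because every $\rho(g)$ merely permutes the uniformly bounded set $\Psi(s(g))$, only finitely many permutation patterns occur; combined with the finiteness of $\mathcal{K}$ this should force $(\mathcal{S})_{A'}$ to be finite, contradicting that $\mathcal{S}$, and hence its restriction to the invariant set $A'$, is of infinite type. Thus $\mu(A')=0$, the support of $\varphi(x)$ has at most two points for a.e.\ $x$, and $x\mapsto {\rm supp}(\varphi(x))$ is the desired $\mathcal{S}$-invariant Borel map into $\partial_2T$. I expect this finiteness passage, rather than the (routine) measurable constructions, to be the main obstacle: turning the finiteness of $\mathcal{K}$ and the bounded permutation action into finiteness of $(\mathcal{S})_{A'}$ must be carried out carefully in the groupoid language, since the source vertex sets $\Psi(s(g))$ vary as $g$ ranges over a fiber.

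For assertion (ii), I would set $\theta(x)=\varphi(x)(\partial T)$, which is Borel and $\mathcal{S}$-invariant, and partition $A$ into the invariant sets $A_1=\{\theta=1\}$, $A_2=\{\theta=0\}$, and $A_3=\{0<\theta<1\}$; the target is $\mu(A_3)=0$. Suppose $\mu(A_3)>0$ and work on $A_3$, where the normalized restrictions $\varphi_\partial(x)\in M(\partial T)$ and $\varphi_V(x)\in M(V(T))$ are both Borel and $\mathcal{S}$-invariant. By assertion (i) applied to $\varphi_\partial$ I obtain an $\mathcal{S}$-invariant map $a\colon A_3\to \partial_2T$. From $\varphi_V$ I would take the nonempty finite set $F(x)\subset V(T)$ of vertices of maximal $\varphi_V(x)$-mass (finite since the masses are summable) and set $s(x)=C(F(x))\in S(T)$; both are $\mathcal{S}$-invariant. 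Applying $G$ then gives an $\mathcal{S}$-invariant map $x\mapsto G(a(x),s(x))\in \delta V(T)$, whose values have at least three vertices because a geodesic from $s(x)$ toward a point of $a(x)$ is infinite and contributes the vertices at distances $1,2,3$.

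This last map reproduces exactly the situation of assertion (i): an $\mathcal{S}$-invariant Borel map into $\delta V(T)$ on an invariant set of positive measure on which $\mathcal{S}$ is of infinite type. The same contradiction with condition (I) therefore applies and forces $\mu(A_3)=0$. Finally, since $V(T)$ and $\partial T$ are disjoint, the conditions $\varphi(x)(\partial T)=1$ and $\varphi(x)(V(T))=1$ are mutually exclusive, so the partition $A=A_1\sqcup A_2$ is determined up to null sets, giving the asserted uniqueness.
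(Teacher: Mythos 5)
Your overall route is the same as the paper's: for (i) you push the normalized restriction of $\varphi(x)^{\otimes 3}$ to $\delta T$ forward through $\delta C$ and then apply $M$ to get an $\mathcal{S}$-invariant Borel map into $\delta V(T)$, and for (ii) you combine the map into $\partial_2T$ from (i) with a map into $S(T)$ (maximal-mass vertices followed by $C$) and feed the pair into $G$; in both cases the contradiction is with condition (I) plus the infinite-type hypothesis. Assertion (ii) is handled exactly as in the paper, including the observation that $V(T)$ and $\partial T$ partition $\Delta T$, which gives the uniqueness.

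The one step that would fail as literally stated is the endgame of (i): the finiteness of $\mathcal{K}$ together with the bounded permutation action does \emph{not} force $(\mathcal{S})_{A'}$ to be finite. Writing $A'=\bigsqcup_F\Psi^{-1}(F)$, an element $g$ of a fiber $r^{-1}(x)\cap(\mathcal{S})_{A'}$ carries $\Psi(s(g))$ onto $\Psi(x)$, and the sources may run over infinitely many level sets $\Psi^{-1}(F')$; one only gets a uniform finite bound on the number of $g$ with source in a \emph{fixed} level set, so the fiber of $(\mathcal{S})_{A'}$ can perfectly well be infinite. The correct target is weaker: pick a single $F$ with $\mu(\Psi^{-1}(F))>0$ and show that the restricted groupoid $(\mathcal{S})_{\Psi^{-1}(F)}$ is finite. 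Indeed, every $g$ there satisfies $\rho(g)F=F$, and after fixing three distinct $v_1,v_2,v_3\in F$ there are at most $\left|F\right|^3$ possible images $(\rho(g)v_1,\rho(g)v_2,\rho(g)v_3)$; two elements of a fiber with the same images and the same source differ by an element of the pointwise stabilizer of a triple, which is a finite groupoid by condition (I), so each fiber of $(\mathcal{S})_{\Psi^{-1}(F)}$ is finite. The contradiction with $\mathcal{S}$ being of infinite type then requires the (standard, but not free) fact that a measure-preserving groupoid of infinite type cannot restrict to a finite groupoid on a positive-measure subset --- this is a recurrence/mass-transport argument, and it is exactly the content hiding behind the paper's phrase ``because $\mathcal{S}$ is of infinite type.'' So your instinct that this passage is the real obstacle is right; the fix is to aim at finiteness of $(\mathcal{S})_{\Psi^{-1}(F)}$ rather than of $(\mathcal{S})_{A'}$, and to invoke recurrence rather than a counting bound on the whole of $A'$.
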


\begin{proof}
To prove assertion (i), we may assume that $\partial T$ contains at least three points. 
If there were a Borel subset $B\subset A$ of positive measure with $\left|{\rm supp}(\varphi(x))\right|\geq 3$ for each $x\in B$, then the restriction of the product measure $\varphi(x)^3$ on $(\partial T)^{3}$ to $\delta T$ would not be zero for each $x\in B$. 
By normalizing the restricted measure and by composing the map
\[M\circ (\delta C)_*\colon M(\delta T)\rightarrow \delta V(T),\]
we obtain an $\cal{S}$-invariant Borel map from $B$ into $\delta V(T)$. 
This contradicts condition (I) for $\rho$ in Definition \ref{defn-c-f} because $\cal{S}$ is of infinite type.
Assertion (i) is proved.

We next prove assertion (ii). 
If there were a Borel subset $B\subset A$ such that for any $x\in B$, both $\varphi(x)(\partial T)$ and $\varphi(x)(V(T))$ are positive, then we would have two $\cal{S}$-invariant Borel maps
\[\psi_1\colon B\rightarrow \partial_2T,\quad \psi_2\colon B\rightarrow S(T).\] 
The latter is obtained by using the map $C\colon V_f(T)\rightarrow S(T)$. 
The map $B\ni x\mapsto G(\psi_1(x), \psi_2(x))\in \delta V(T)$ is then $\cal{S}$-invariant. 
We can deduce a contradiction as in the proof of assertion (i).
Assertion (ii) thus follows.
\end{proof}

\begin{lem}\label{lem-inv-spe}
We define $\cal{G}$, $T$, $\rho$, $A$, $\cal{S}$ and $\varphi$ as in Lemma \ref{lem-two-supp}.
Let $A=A_1\sqcup A_2$ be the partition in Lemma \ref{lem-two-supp} (ii).
Then we have two $\cal{S}$-invariant Borel maps
\[\varphi_1\colon A_1\rightarrow \partial_2T,\quad \varphi_2\colon A_2\rightarrow S(T)\]
satisfying the following two conditions:
\begin{enumerate}
\item[(a)] For each $i=1, 2$, let $B_i$ be a Borel subset of $A_i$ with positive measure and $\psi_i\colon B_i\rightarrow M(\Delta T)$ an $\cal{S}$-invariant Borel map.
Then the inclusions
\[{\rm supp}(\psi_1(x))\subset \varphi_1(x),\quad {\rm supp}(\psi_2(y))\subset \varphi_2(y)\]
hold for a.e.\ $x\in B_1$ and a.e.\ $y\in B_2$.
\item[(b)] If $\cal{T}$ is a subgroupoid of $(\cal{G})_A$ with $\cal{S}\lhd \cal{T}$, then $\varphi_1$ and $\varphi_2$ are $\cal{T}$-invariant.
\end{enumerate}
\end{lem}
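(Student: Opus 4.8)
The plan is to build $\varphi_1$ and $\varphi_2$ as \emph{maximal} $\mathcal{S}$-invariant support maps, handling $A_1$ and $A_2$ separately and following the maximization scheme from the proof of Lemma~\ref{lem-inv-s}. The first thing I would record is that, once the given $\varphi$ is fixed, the dichotomy of Lemma~\ref{lem-two-supp}(ii) becomes rigid: if $\psi\colon B\to M(\Delta T)$ is any $\mathcal{S}$-invariant Borel map on a positive-measure set $B\subset A_1$, then applying Lemma~\ref{lem-two-supp}(ii) to the $\mathcal{S}$-invariant map $\tfrac12(\varphi+\psi)$ forces $\psi(x)(\partial T)=1$ for a.e.\ $x\in B$, since $\tfrac12(\varphi+\psi)(x)(\partial T)\ge\tfrac12>0$ everywhere on $A_1$; symmetrically every $\mathcal{S}$-invariant measure over a subset of $A_2$ is supported on $V(T)$. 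Hence on $A_1$ I only face boundary-supported measures and on $A_2$ only vertex-supported ones.

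On $A_1$, Lemma~\ref{lem-two-supp}(i) shows that the support of any $\mathcal{S}$-invariant $\partial T$-supported measure has at most two points and so defines an $\mathcal{S}$-invariant Borel map into $\partial_2T$; moreover any two such support maps $\chi,\chi'$ are \emph{consistent}, since averaging the underlying measures and reapplying Lemma~\ref{lem-two-supp}(i) forces $\chi(x)\cup\chi'(x)$ to have at most two points. I would let $J$ be the set of $\mathcal{S}$-invariant Borel maps $A_1\to\partial_2T$ arising as supports of $\mathcal{S}$-invariant maps into $M(\partial T)$ (nonempty because ${\rm supp}(\varphi)\in J$) and pick $\varphi_1\in J$ maximizing $\mu(\{\,x : |\varphi_1(x)|=2\,\})$; the supremum is attained because consistency lets one merge a maximizing sequence into a single dominating element of $J$. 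Property (a) on $A_1$ then follows: the support of a given $\psi_1$ extends, by Lemma~\ref{lem-inv-ext}, to a member of $J$, its union with $\varphi_1$ again lies in $J$ and dominates $\varphi_1$, and maximality forces ${\rm supp}(\psi_1(x))\subset\varphi_1(x)$.

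The real work is on $A_2$, where I must show that every $\mathcal{S}$-invariant vertex-supported measure is supported on the vertex set of a single simplex. Condition (I) of Definition~\ref{defn-c-f} only controls finite triples, so the key is to extract from any ``large'' invariant support a canonical \emph{finite} $\mathcal{S}$-invariant set of cardinality at least three, i.e.\ a map into $\delta V(T)$, which contradicts condition (I) since $\mathcal{S}$ is of infinite type, exactly as in the proof of Lemma~\ref{lem-two-supp}. For an $\mathcal{S}$-invariant $\nu\colon B_2\to M(V(T))$ I would order the level sets of the weight function $v\mapsto\nu(x)(\{v\})$ by decreasing value: each level set is finite and canonical, hence $\mathcal{S}$-invariant and Borel in $x$, and the smallest union of top level sets of total cardinality at least three gives the desired $\delta V(T)$-valued map whenever the support has three or more points, ruling out supports of size $\ge 3$ (infinite ones included). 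If the support is $\{a(x),b(x)\}$ with $d(a(x),b(x))\ge 2$, the vertex set of the geodesic $l_{a(x)}^{b(x)}$ is a canonical invariant set of cardinality $\ge 3$, again contradicting condition (I); so the two vertices are adjacent and the support is a simplex. With this established, $A_2$ is treated just like $A_1$: supports yield $\mathcal{S}$-invariant maps into $S(T)$, any two are consistent by applying the same argument to the average, and I take $\varphi_2$ maximizing the measure of the set where the simplex is an edge, obtaining property (a) as above.

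Property (b) is a verbatim adaptation of Lemma~\ref{lem-inv-s}(ii): for $\phi\in[[\mathcal{T}]]_{\mathcal{S}}$ the map $x\mapsto\rho(\phi(x)^{-1})\varphi_i(r(\phi(x)))$ is the support of an $\mathcal{S}$-invariant measure map, so the minimality in property (a) gives its containment in $\varphi_i(x)$; the inverse of $\phi$ yields the reverse inclusion, hence equality, and since $\mathcal{T}$ is generated by $[[\mathcal{T}]]_{\mathcal{S}}$ the maps $\varphi_1,\varphi_2$ are $\mathcal{T}$-invariant. I expect the only genuine obstacle to be the $A_2$ finiteness-and-adjacency step, namely upgrading the triple-wise control of condition (I) to control of arbitrary invariant vertex-measures; the level-set-plus-geodesic construction of a $\delta V(T)$-valued map is the heart of the matter, while the $A_1$ case and property (b) are routine transcriptions of Lemmas~\ref{lem-two-supp}(i) and~\ref{lem-inv-s}.
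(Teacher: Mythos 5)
Your proof is correct and follows essentially the same route as the paper's: construct $\varphi_1$ and $\varphi_2$ by maximizing over $\mathcal{S}$-invariant maps exactly as in Lemma~\ref{lem-inv-s}, use condition (I) via canonical finite-set constructions to bound supports, and prove (b) by transporting along $[[\mathcal{T}]]_{\mathcal{S}}$. The paper's own write-up is much terser (it obtains $\varphi_2$ by citing Lemma~\ref{lem-inv-s} and leaves implicit the verification that an $\mathcal{S}$-invariant vertex-supported measure has support contained in a single simplex), so your level-set-plus-geodesic argument on $A_2$ and the averaging trick for the rigidity of the $A_1\sqcup A_2$ dichotomy usefully fill in steps the paper only gestures at.
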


\begin{proof}
The construction of $\varphi_1$ is almost the same as that of $\varphi_0$ in Lemma \ref{lem-inv-s}. 
Let $I$ denote the set of all $\cal{S}$-invariant Borel maps from $A_1$ into $M(\Delta T)$. 
It follows from Lemma \ref{lem-two-supp} (ii) that each $\varphi \in I$ satisfies the equality $\varphi(x)(\partial T)=1$ for a.e.\ $x\in A_1$. 
For each $\varphi \in I$, we set
\[S_{\varphi}=\{\, x\in A_1\mid \left|{\rm supp}(\varphi(x))\right|=2\,\}.\]
There exists $\varphi_1\in I$ with $\mu(S_{\varphi_1})=\sup_{\varphi \in I}\mu(S_{\varphi})$.
This map $\varphi_1$ then satisfies condition (a). 
The existence of $\varphi_2$ follows from Lemma \ref{lem-inv-s}.

If $\cal{T}$ is a subgroupoid of $(\cal{G})_A$ with $\cal{S}\lhd \cal{T}$, then $\varphi_2$ is $\cal{T}$-invariant by Lemma \ref{lem-inv-s} (ii). 
An argument of the same kind as in the proof of Lemma \ref{lem-inv-s} (ii) shows that $\varphi_1$ is $\cal{T}$-invariant.
\end{proof}


\subsection{Examples of groups in Class $\mathscr{F}$}\label{subsec-c-f-ex}

Combining results on invariant Borel maps proved in Section \ref{subsec-nat-maps}, we obtain the following hereditary property of groups in Class $\mathscr{F}$.

\begin{prop}\label{prop-ex-c}
Let $\Gamma$ be a discrete group.
Suppose that we have a positive integer $n$ and infinite subgroups $\Gamma_i$ and $N_i$ for $i=1,\ldots, n$ satisfying the following three conditions:
\begin{itemize}
\item For each $i=1,\ldots, n$, we have $N_i\lhd \Gamma_i$.
\item For each $i=1,\ldots, n-1$, we have $N_{i+1}<\Gamma_i$ and $N_1<\Gamma_n$.
\item $\Gamma$ is generated by $\Gamma_1,\ldots, \Gamma_n$. 
\end{itemize}
If $\Gamma_1$ belongs to $\mathscr{F}$, then so does $\Gamma$.
\end{prop}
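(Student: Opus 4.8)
The plan is to feed the hypothesis $\Gamma_1\in\mathscr{F}$ into the cyclic chain $\Gamma_1,\dots,\Gamma_n$ and to transport invariance around the cycle using the normalities $N_i\lhd\Gamma_i$ and the canonical simplex maps of Lemma \ref{lem-inv-s}. So fix a measure-preserving action $\Gamma\c(X,\mu)$ with groupoid $\mathcal{G}=\Gamma\ltimes X$ and a Borel cocycle $\rho\colon\Gamma\times X\to\aut^*(T)$ satisfying conditions (I), (II) and (III); the goal is a $(\mathcal{G},\rho)$-invariant Borel map into $V(T)$. First I would restrict $\rho$ to $\Gamma_1\times X$. Since a subgroupoid of a finite (resp.\ amenable) groupoid is again finite (resp.\ amenable), the restricted cocycle still satisfies (I), (II) and (III), so $\Gamma_1\in\mathscr{F}$ produces a $(\Gamma_1\ltimes X,\rho)$-invariant Borel map $X\to V(T)$. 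Each $N_i$ is infinite, hence each $N_i\ltimes X$ is of infinite type and Lemma \ref{lem-inv-s} is available for it as soon as an invariant vertex map exists.

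The next step is the cyclic propagation. Because $N_2<\Gamma_1$, the initial map is $(N_2\ltimes X)$-invariant, so Lemma \ref{lem-inv-s}(i) yields the canonical $(N_2\ltimes X)$-invariant simplex map $\varphi_2\colon X\to S(T)$; as $N_2\lhd\Gamma_2$ gives $N_2\ltimes X\lhd\Gamma_2\ltimes X$, Lemma \ref{lem-inv-s}(ii) makes $\varphi_2$ $(\Gamma_2\ltimes X)$-invariant, and condition (III) together with Lemma \ref{lem-inv-ext} converts it into a $(\Gamma_2\ltimes X)$-invariant vertex map, exactly as in the proof of Proposition \ref{prop-basic-c}(ii). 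Since $N_3<\Gamma_2$, this map is $(N_3\ltimes X)$-invariant, and iterating around the cycle—using $N_{i+1}<\Gamma_i$, the canonical map for $N_{i+1}$, the normality $N_{i+1}\lhd\Gamma_{i+1}$, and (III)—produces for every $i$ the canonical $(N_i\ltimes X)$-invariant simplex map $\varphi_i$, which is $(\Gamma_i\ltimes X)$-invariant, together with a $(\Gamma_i\ltimes X)$-invariant vertex map $w_i$ satisfying $w_i(x)\subset\varphi_i(x)$. The closing link $N_1<\Gamma_n$ is what guarantees that the process returns to $\Gamma_1$ and that all the $\varphi_i$ are available at once.

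It then remains to assemble a single map invariant under all of $\Gamma_1,\dots,\Gamma_n$ simultaneously; since these generate $\Gamma$, such a map is $(\mathcal{G},\rho)$-invariant, and one last use of (III) and Lemma \ref{lem-inv-ext} yields the required vertex map. Closing this loop is the crux. Two elementary observations constrain the situation: $w_i$ is invariant under both $N_i\ltimes X$ and $N_{i+1}\ltimes X$, so $w_i(x)\in\varphi_i(x)\cap\varphi_{i+1}(x)$; and by the maximality in Lemma \ref{lem-inv-s}(i) both $w_{i-1}$ and $w_i$ lie in $\varphi_i$ (using $N_i<\Gamma_{i-1}$ and $N_i<\Gamma_i$), so $\varphi_i(x)$ contains $w_{i-1}(x)$ and $w_i(x)$. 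Consequently, on the $\Gamma$-invariant Borel set where every $\varphi_i(x)$ is a single vertex, all of $w_1(x),\dots,w_n(x)$ coincide, producing at once a map invariant under every $\Gamma_i$ there. The natural way to finish is to set $H=\{\gamma\in\Gamma:\ \varphi_1\text{ is }\gamma\text{-invariant}\}$, note $\Gamma_1\subset H$, and prove $H=\Gamma$ by repeatedly invoking the principle that a canonical map which happens to remain invariant under a larger groupoid is canonical for that larger groupoid (a direct consequence of the uniqueness in Lemma \ref{lem-inv-s}(i)).

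The main obstacle, which I expect to concentrate entirely on the locus where some $\varphi_i(x)$ is a genuine edge, is precisely that $N_{i+1}$ is only \emph{contained} in $\Gamma_i$ and not normal in it, so Lemma \ref{lem-inv-s}(ii) cannot be used to upgrade the $N_{i+1}$-invariance of an already-constructed map to $\Gamma_i$-invariance; equivalently, the distinct $\Gamma_i$-equivariances of the $w_i$ forbid any naive intersection or averaging of the $\varphi_i$. Overcoming this is where I would deploy the finer machinery: decomposing an associated invariant map into $M(\Delta T)$ by Lemmas \ref{lem-two-supp} and \ref{lem-inv-spe}, discarding any boundary-supported part through condition (II), and using condition (III) on the edge locus to split off a consistent vertex selection, so that the edge locus either collapses or feeds back into the maximality argument forcing $H=\Gamma$.
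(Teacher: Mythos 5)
Your setup and the propagation around the cycle match the paper's proof: restrict $\rho$ to $\Gamma_1\times X$, use $\Gamma_1\in\mathscr{F}$ to produce a vertex map, and pass it along via Lemma \ref{lem-inv-s}, using $N_{i+1}<\Gamma_i$ to start the canonical map for $N_{i+1}$ and $N_{i+1}\lhd\Gamma_{i+1}$ to upgrade it to $\Gamma_{i+1}$-invariance. The genuine gap is at the assembly step, and your diagnosis of the obstacle is misplaced. You apply the maximality of Lemma \ref{lem-inv-s} (i) only to the vertex maps $w_i$, obtaining $w_i(x)\subset\varphi_i(x)\cap\varphi_{i+1}(x)$, which lets you conclude only on the locus where every $\varphi_i(x)$ is a single vertex; you then declare the edge locus the ``main obstacle'' and gesture at machinery (condition (II), measures on $\Delta T$, the subgroup $H$) that is neither needed nor shown to close the argument --- the $H$ step in particular is a restatement of the goal unless you already know the $\varphi_i$ all coincide, which is exactly what remains to be proved.

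The missing idea is to apply the maximality to the canonical simplex maps themselves rather than to the $w_i$. The canonical $(N_i\ltimes X)$-invariant map $\varphi_i\colon X\to S(T)$ is $(\Gamma_i\ltimes X)$-invariant (by $N_i\lhd\Gamma_i$ and Lemma \ref{lem-inv-s} (ii)), hence $(N_{i+1}\ltimes X)$-invariant because $N_{i+1}<\Gamma_i$; the maximality of $\varphi_{i+1}$ among $N_{i+1}$-invariant simplex-valued maps (which the paper extracts from the construction in Lemma \ref{lem-inv-s} (i) using condition (I)) then yields $\varphi_i(x)\subset\varphi_{i+1}(x)$ a.e., and likewise $\varphi_n(x)\subset\varphi_1(x)$ via $N_1<\Gamma_n$. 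The cyclic chain $\varphi_1\subset\varphi_2\subset\cdots\subset\varphi_n\subset\varphi_1$ forces all the $\varphi_i$ to coincide a.e., on the edge locus just as on the vertex locus, and the common map is invariant under every $\Gamma_i\ltimes X$, hence under $\mathcal{G}$; one application of condition (III) and Lemma \ref{lem-inv-ext} at the very end produces the required vertex map. Note also that normality of $N_{i+1}$ in $\Gamma_i$ is never needed here, contrary to what your last paragraph suggests: only the containment $N_{i+1}<\Gamma_i$ enters, through the maximality of $\varphi_{i+1}$, so no appeal to condition (II) or to Lemmas \ref{lem-two-supp} and \ref{lem-inv-spe} is required.
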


\begin{proof}
Let $\Gamma \c (X, \mu)$ be a measure-preserving action on a standard probability space, and let $\rho \colon \Gamma \times X\rightarrow \aut^*(T)$ be a Borel cocycle satisfying the three conditions in Definition \ref{defn-c-f}. 
For each $i=1,\ldots, n$, we set
\[\cal{G}_i=\Gamma_i\ltimes X,\quad \cal{N}_i=N_i\ltimes X.\]
Since $\Gamma_1$ belongs to $\mathscr{F}$, there exists a $\cal{G}_1$-invariant Borel map from $X$ into $V(T)$, which is obviously $\cal{N}_2$-invariant. 
Applying Lemma \ref{lem-inv-s} (ii), we obtain a $\cal{G}_2$-invariant Borel map from $X$ into $S(T)$. 
Repeating this process, for each $i=1,\ldots, n$, one can find a $\cal{G}_i$-invariant Borel map from $X$ into $S(T)$. 
By Lemma \ref{lem-inv-s}, for each $i=1,\ldots, n$, there exist a $\cal{G}_i$-invariant Borel map $\varphi_i\colon X\rightarrow S(T)$ and an $\cal{N}_i$-invariant $\psi_i\colon X\rightarrow S(T)$ satisfying the following three conditions:
\begin{itemize}
\item If $\varphi \colon X\rightarrow S(T)$ is a $\cal{G}_i$-invariant Borel map, then we have $\varphi(x)\subset \varphi_i(x)$ for a.e.\ $x\in X$.
\item If $\psi \colon X\rightarrow S(T)$ is an $\cal{N}_i$-invariant Borel map, then we have $\psi(x)\subset \psi_i(x)$ for a.e.\ $x\in X$.
\item $\psi_i$ is $\cal{G}_i$-invariant. 
\end{itemize}
The first and third conditions imply that for each $i$, the inclusion $\psi_i(x)\subset \varphi_i(x)$ holds for a.e.\ $x\in X$. 
Since $\varphi_i$ is also $\cal{N}_i$-invariant, the second condition implies that for each $i$, the inclusion $\varphi_i(x)\subset \psi_i(x)$ holds for a.e.\ $x\in X$. 
It follows that $\varphi_i$ and $\psi_i$ coincide a.e.\ on $X$ for each $i$. 
On the other hand, for each $i=1,\ldots, n-1$, the inclusion $\cal{N}_{i+1}<\cal{G}_i$ implies that $\varphi_i(x)\subset \psi_{i+1}(x)$ for a.e.\ $x\in X$. 
Similarly, we have $\varphi_n(x)\subset \psi_1(x)$ for a.e.\ $x\in X$. 
The inclusion
\[\varphi_1(x)\subset \varphi_2(x)\subset \cdots \subset \varphi_n(x)\subset \varphi_1(x)\]   
thus holds for a.e.\ $x\in X$, and all $\varphi_i$ coincide a.e.\ on $X$. 
This single map is then $(\Gamma \ltimes X)$-invariant since $\Gamma \ltimes X$ is generated by $\cal{G}_1,\ldots, \cal{G}_n$.
\end{proof}

We now present examples of groups in Class $\mathscr{F}$.

\begin{prop}\label{prop-exa-c}
A discrete group $\Gamma$ belongs to Class $\mathscr{F}$ if it is non-amenable and satisfies one of the following two conditions:
\begin{enumerate}
\item[(a)] There is an infinite, amenable and normal subgroup $N$ of $\Gamma$.
\item[(b)] There is an infinite normal subgroup $N$ of $\Gamma$ such that the pair $(\Gamma, N)$ satisfies relative property (T) of Kazhdan-Margulis.
\end{enumerate}
\end{prop}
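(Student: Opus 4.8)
The plan is to treat both cases uniformly once the normal subgroup $N$ has been used to produce a map invariant under the subgroupoid $\mathcal{N}=N\ltimes X$, and then to promote this map to a $\mathcal{G}$-invariant one by normality, exactly as in the proof of Proposition \ref{prop-basic-c}(ii). So fix a measure-preserving action $\Gamma \c (X,\mu)$ and a Borel cocycle $\rho \colon \Gamma \times X\to \aut^*(T)$ satisfying conditions (I), (II) and (III) in Definition \ref{defn-c-f}, and set $\mathcal{G}=\Gamma \ltimes X$ and $\mathcal{N}=N\ltimes X$. Since $N\lhd \Gamma$, the subgroupoid $\mathcal{N}$ is normal in $\mathcal{G}$, and since $N$ is infinite, $\mathcal{N}$ is of infinite type, so the lemmas of Section \ref{subsec-nat-maps} apply with $\mathcal{S}=\mathcal{N}$. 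The two hypotheses enter only in the construction of the $\mathcal{N}$-invariant map.

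For case (a), an infinite amenable $N$ makes $\mathcal{N}$ an amenable groupoid, so by the fixed point property of amenable groupoids (cf.\ Proposition \ref{prop-ame-basic}), applied through $\rho$ to the continuous action of $\aut^*(T)$ on the compact metrizable space $\Delta T$, there is an $\mathcal{N}$-invariant Borel map $\varphi \colon X\to M(\Delta T)$. Lemma \ref{lem-inv-spe} then yields an essentially unique partition $X=X_1\sqcup X_2$ (that of Lemma \ref{lem-two-supp}(ii)) together with canonical $\mathcal{N}$-invariant maps $\varphi_1\colon X_1\to \partial_2T$ and $\varphi_2\colon X_2\to S(T)$; by $\mathcal{N}\lhd \mathcal{G}$ and Lemma \ref{lem-inv-spe}(b), both are $\mathcal{G}$-invariant. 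The crucial step is to eliminate $X_1$: extending $\varphi_1$ arbitrarily to a Borel map $X\to \partial_2T$ and invoking condition (II) shows that $(\mathcal{G})_{X_1}$ is contained in an amenable subgroupoid, hence is itself amenable; since $X_1$ is $\Gamma$-invariant and $\mu|_{X_1}$ is an invariant finite measure, amenability of $(\mathcal{G})_{X_1}$ would force $\Gamma$ to be amenable (average a $\Gamma$-equivariant map into a compact convex set against $\mu$ to obtain a fixed point), contradicting non-amenability of $\Gamma$ unless $\mu(X_1)=0$. Thus $\varphi_2\colon X\to S(T)$ is a $\mathcal{G}$-invariant map.

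For case (b) I would use that the tree metric $d$ on $V(T)$ is conditionally negative definite, giving an affine isometric action of $\aut^*(T)$ on a Hilbert space $\mathcal{H}$ and an equivariant embedding $\beta \colon V(T)\to \mathcal{H}$ with $\|\beta(u)-\beta(v)\|^{2}=d(u,v)$. Composing with $\rho$ produces a measurable field of affine isometric actions of $\mathcal{G}$, and relative property (T) of $(\Gamma,N)$, equivalently relative property (FH), should provide an $\mathcal{N}$-invariant measurable section $\xi \colon X\to \mathcal{H}$. As $\|\beta(v)-\xi(x)\|\to \infty$ when $v$ leaves every finite set, the set of vertices nearest to $\xi(x)$ is finite and transforms correctly under $\mathcal{N}$, so it defines an $\mathcal{N}$-invariant Borel map $X\to V_f(T)$; composing with the barycenter map $C\colon V_f(T)\to S(T)$ and then selecting an endpoint via condition (III) gives an $\mathcal{N}$-invariant Borel map $X\to V(T)$. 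From here the argument coincides with that of Proposition \ref{prop-basic-c}(ii): Lemma \ref{lem-inv-s} furnishes a canonical $\mathcal{N}$-invariant map $\varphi_0\colon X\to S(T)$, which is $\mathcal{G}$-invariant by Lemma \ref{lem-inv-s}(ii) and normality of $\mathcal{N}$.

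In either case one is left with a $\mathcal{G}$-invariant Borel map into $S(T)$, and a final application of Lemma \ref{lem-inv-ext} together with condition (III) (absence of inversions, used to choose a vertex on the set where the map takes values in edges) upgrades it to the required $(\mathcal{G},\rho)$-invariant Borel map $\varphi \colon X\to V(T)$, so that $\Gamma \in \mathscr{F}$. I expect the real obstacle to lie in case (b): making rigorous the measured, fiberwise form of relative property (FH) for the cocycle $\rho$, in particular coping with the possible non-integrability of $x\mapsto d(o,\rho(\gamma,x)o)$ so that the Hilbert-space fixed point exists as a genuine measurable $\mathcal{N}$-invariant section, and then transferring that section to an honestly bounded, $\mathcal{N}$-invariant set of vertices of $T$.
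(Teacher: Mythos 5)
Your argument follows the paper's proof essentially step for step: in case (a) you use amenability of $N\ltimes X$ to obtain an $\mathcal{N}$-invariant map into $M(\Delta T)$, rule out the $\partial_2T$-valued part by combining condition (II) with non-amenability of $\Gamma$, and then promote the resulting $S(T)$-valued map to a $\mathcal{G}$-invariant $V(T)$-valued one via Lemmas \ref{lem-inv-spe} and \ref{lem-inv-ext} and condition (III), exactly as the paper does. The only divergence is in case (b), where the paper simply cites a relative version of the Adams--Spatzier theorem to produce the $\mathcal{N}$-invariant map into $V(T)$, i.e.\ precisely the cocycle form of relative property (T) that you sketch by hand; the integrability issue you flag is the reason that argument is usually run through the positive definite kernels $e^{-td}$ rather than an unbounded affine isometric action, but this is internal to the cited result and does not affect the structure of your proof.
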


\begin{proof}
Let $\Gamma \c (X, \mu)$ be a measure-preserving action on a standard probability space, and let $\rho \colon \Gamma \times X\rightarrow \aut^*(T)$ be a Borel cocycle satisfying the three conditions in Definition \ref{defn-c-f}. 
We set
\[\cal{G}=\Gamma \ltimes X,\quad \cal{N}=N\ltimes X.\]

In case (a), since $\cal{N}$ is amenable, there exists an $\cal{N}$-invariant Borel map from $X$ into $M(\Delta T)$. 
By Lemma \ref{lem-two-supp} (ii), there exists a Borel partition $X=A_1\sqcup A_2$ such that one can construct an $\cal{N}$-invariant Borel map from $A_1$ into $\partial_2T$ and an $\cal{N}$-invariant Borel map from $A_2$ into $S(T)$. 
Since $\cal{G}$ is nowhere amenable, the equality $X=A_2$ holds up to null sets by Lemma \ref{lem-inv-spe} and condition (II) for $\rho$. 
We therefore obtain a $\cal{G}$-invariant Borel map from $X$ into $S(T)$. 
By using Lemma \ref{lem-inv-ext} and condition (III) in Definition \ref{defn-c-f}, one obtains a $\cal{G}$-invariant Borel map from $X$ into $V(T)$. 
It follows that $\Gamma$ belongs to $\mathscr{F}$.

In case (b), one can find an $\cal{N}$-invariant Borel map from $X$ into $V(T)$ by a relative version of a theorem due to Adams-Spatzier \cite{adams-spa}. 
Lemma \ref{lem-inv-spe} then implies existence of a $\cal{G}$-invariant Borel map from $X$ into $V(T)$.
\end{proof}

\begin{prop}\label{prop-c-f-mcg}
Let $S$ be a surface of genus $g$ with $p$ boundary components. 
If $3g+p\geq 5$, then any finite index subgroup of $\mod^*(S)$ belongs to Class $\mathscr{F}$. 
\end{prop}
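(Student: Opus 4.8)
The plan is to reduce the assertion to Proposition \ref{prop-ex-c} by exhibiting, for a finite index subgroup $\Gamma$ of $\mod^*(S)$, a suitable chain of subgroups $\Gamma_i, N_i$ whose first term already lies in Class $\mathscr{F}$. By Proposition \ref{prop-basic-c} (i), it suffices to produce \emph{some} finite index subgroup in $\mathscr{F}$, since membership is insensitive to passage to finite index sub- or supergroups. The natural source of such subgroups is the collection of stabilizers of simplices of the complex of curves: for a non-separating simple closed curve $\alpha$, the stabilizer of $\alpha$ in $\mod^*(S)$ contains the Dehn twist $t_\alpha$ along $\alpha$, which generates an infinite cyclic (hence amenable) subgroup, and this subgroup is normal inside the stabilizer because conjugating a twist by an element fixing $\alpha$ yields another power of the same twist.

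**First I would** fix a family of curves $\alpha_1, \dots, \alpha_n$ whose associated twist-stabilizers cover enough of the mapping class group and interact along the normal-subgroup conditions required by Proposition \ref{prop-ex-c}. Concretely, the stabilizer $\Gamma_i = \stab(\alpha_i)$ of each curve $\alpha_i$ contains the cyclic group $N_i = \langle t_{\alpha_i}\rangle$ as an infinite amenable normal subgroup, so each $\Gamma_i$ belongs to $\mathscr{F}$ by Proposition \ref{prop-exa-c}(a), provided $\Gamma_i$ is non-amenable — which holds because curve stabilizers in a mapping class group of high enough complexity contain free groups (here is where the hypothesis $3g+p \geq 5$ enters). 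The remaining data needed for Proposition \ref{prop-ex-c} is a cyclic ordering in which $N_{i+1} < \Gamma_i$ and $N_1 < \Gamma_n$; this is arranged by choosing the curves so that $\alpha_{i+1}$ is disjoint from $\alpha_i$ (and $\alpha_1$ disjoint from $\alpha_n$), since disjointness of $\alpha_{i+1}$ from $\alpha_i$ means $t_{\alpha_{i+1}}$ fixes $\alpha_i$, i.e.\ $N_{i+1} < \Gamma_i$.

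**The crucial step** is verifying that $\Gamma_1, \dots, \Gamma_n$ generate the whole group $\mod^*(S)$, or at least a finite index subgroup of it. This is where the classical generation of the mapping class group by Dehn twists is invoked: the twists $t_{\alpha_1}, \dots, t_{\alpha_n}$, together with the other elements living in the various curve-stabilizers, can be chosen to generate $\mod^*(S)$ up to finite index, using a connected chain of curves realizing a known generating set. The inequality $3g+p \geq 5$ guarantees both that $S$ has enough complexity to support such a chain of disjoint curves with the required cyclic adjacency pattern and that each individual stabilizer is non-amenable, so that Proposition \ref{prop-exa-c}(a) applies to the base group $\Gamma_1$.

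**The main obstacle I anticipate** is the bookkeeping needed to simultaneously satisfy all three bulleted hypotheses of Proposition \ref{prop-ex-c}: the curves must be arranged in a cyclic sequence realizing the adjacency (disjointness) relations $N_{i+1} < \Gamma_i$ and $N_1 < \Gamma_n$, while at the same time the union of their stabilizers must generate a finite index subgroup of $\mod^*(S)$. The two demands pull in opposite directions — disjointness of consecutive curves keeps the stabilizers from being too large, yet enough stabilizers are needed for generation — and reconciling them is precisely what forces the complexity bound $3g+p \geq 5$. Once a concrete such system of curves is exhibited, the conclusion $\Gamma \in \mathscr{F}$ follows formally by applying Proposition \ref{prop-ex-c} to get a finite index subgroup in $\mathscr{F}$, and then Proposition \ref{prop-basic-c}(i) to transfer membership to $\Gamma$ itself.
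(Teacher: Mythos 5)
Your plan is essentially the paper's own proof: take $N_i=\langle t_{\delta_i}\rangle$ central in the curve stabilizer $\Gamma_i$, get $\Gamma_i\in\mathscr{F}$ from Proposition \ref{prop-exa-c}(a) via non-amenability, feed a cyclically disjoint chain of twist generators into Proposition \ref{prop-ex-c}, and transfer to finite index subgroups by Proposition \ref{prop-basic-c}(i). The only substance you leave unexecuted is exactly what the paper spends the second half of its proof on, namely exhibiting concrete cyclic sequences of pairwise-disjoint consecutive curves realizing a generating set in each case, including the case $g=0$ where Dehn twists must be replaced by half twists about the curves of Figure \ref{fig-seq}(b).
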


To prove this proposition, let us introduce terminology. 
We say that a simple closed curve in $S$ is {\it essential} if it is neither homotopic to a point of $S$ nor isotopic to a boundary component of $S$. 
Let $V(S)$ denote the set of isotopy classes of essential simple closed curves in $S$.
Let
\[I\colon V(S)\times V(S)\rightarrow \mathbb{Z}_{\geq 0}\]
denote the geometric intersection number, i.e., the minimal cardinality of the intersection of representatives for two elements of $V(S)$.
\begin{figure}
\begin{center}
\includegraphics[width=12cm]{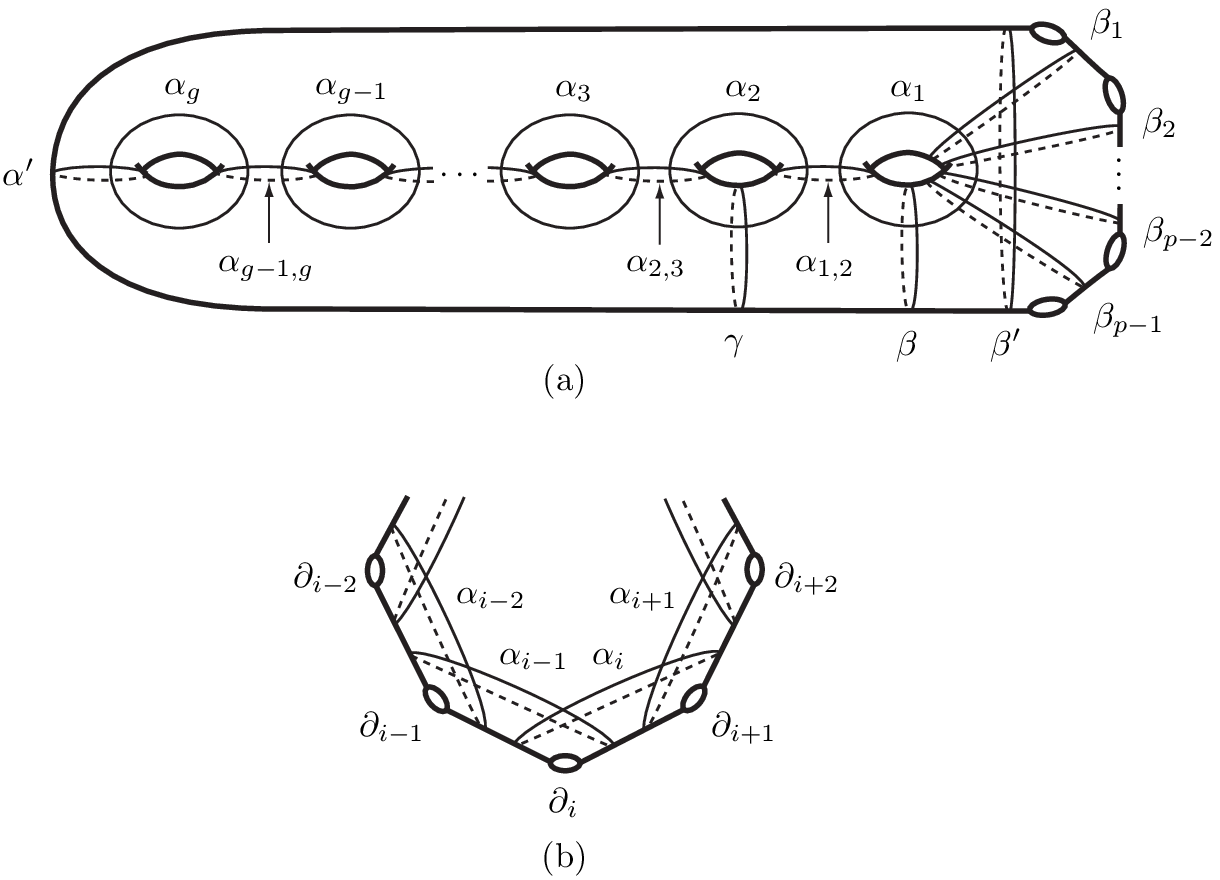}
\caption{}\label{fig-seq}
\end{center}
\end{figure}

The {\it mapping class group} $\mod(S)$ of $S$ is defined to be the group of isotopy classes of orientation-preserving homeomorphisms from $S$ onto itself. 
The {\it pure mapping class group} $\pmod(S)$ of $S$ is defined to be the group of isotopy classes of orientation-preserving homeomorphisms from $S$ onto itself fixing each boundary component of $S$ as a set. 
Note that $\mod(S)$ and $\pmod(S)$ are finite index subgroups of $\mod^*(S)$.
If $g\geq 1$, then $\pmod(S)$ is generated by the Dehn twists about curves in the family
\[\{ \alpha_i\}_{i=1}^g\cup \{ \alpha_{j, j+1}\}_{j=1}^{g-1}\cup \{ \beta_k\}_{k=1}^{p-1}\cup \{ \beta, \gamma \}\]
described in Figure \ref{fig-seq} (a), where $\gamma$ is excluded if $g=1$. 
If $g=0$, then $\mod(S)$ is generated by the half twists about curves in the family $\{ \alpha_i\}_{i=1}^p$ described in Figure \ref{fig-seq} (b). 
We refer to \cite{gervais} and Theorem 4.5 in \cite{birman} for these facts, respectively.

\begin{proof}[Proof of Proposition \ref{prop-c-f-mcg}]
We first assume $g\geq 1$.
We claim that it suffices to find a positive integer $n$ and $\delta_1,\ldots, \delta_n\in V(S)$ satisfying the following two conditions:
\begin{enumerate}
\item[(1)] $I(\delta_i, \delta_{i+1})=0$ for each $i=1,\ldots, n-1$ and $I(\delta_n, \delta_1)=0$.
\item[(2)] $\pmod(S)$ is generated by $t_1,\ldots, t_n$, where $t_i\in \pmod(S)$ is the Dehn twist about $\delta_i$ for each $i$.
\end{enumerate}
For each $i=1,\ldots, n$, let $N_i$ be the infinite cyclic subgroup generated by $t_i$, and let $\Gamma_i$ be the stabilizer of $\delta_i$ in $\pmod(S)$.
It follows from the equality
\[ft_{\delta}f^{-1}=t_{f\delta}, \quad \forall f\in \mod(S),\ \forall \delta \in V(S)\]
(see Lemma 4.1.C in \cite{iva-mcg}) that for each $i=1,\ldots, n$, $N_i$ is contained in the center of $\Gamma_i$.
Condition (1) implies $N_{i+1}<\Gamma_i$ for each $i=1,\ldots, n-1$ and $N_1<\Gamma_n$.
It follows from $3g+p\geq 5$ that each $\Gamma_i$ is non-amenable.
Propositions \ref{prop-ex-c} and \ref{prop-exa-c} show that $\pmod(S)$ belongs to $\mathscr{F}$, and therefore so does any finite index subgroup of $\mod^*(S)$ by Proposition \ref{prop-basic-c} (i).
The claim follows.

We now find a sequence of curves satisfying conditions (1) and (2).
If $g\geq 2$, then the sequence
\[\alpha_1,\ \alpha_2, \ldots,\ \alpha_g,\ \beta_1,\ \beta_2,\ldots,\ \beta_{p-1},\ \beta,\ \alpha_{1, 2},\ \alpha_{2, 3},\ldots,\ \alpha_{g-1, g},\ \gamma\]
is a desired one.
Assume $g=1$.
We have $p\geq 2$ and define an essential curve $\beta'$ as in Figure \ref{fig-seq} (a).
The sequence $\alpha_1$, $\beta'$, $\beta$, $\beta_1$, $\beta_2,\ldots,$ $\beta_{p-1}$, $\beta$, $\beta'$ is a desired one.

We assume $g=0$.
We then have $p\geq 5$.
As in the first paragraph in the proof, it is shown that it suffices to find a positive integer $n$ and $\delta_1,\ldots, \delta_n\in V(S)$ satisfying condition (1) and the following condition:
\begin{enumerate}
\item[(3)] $\mod(S)$ is generated by $h_1,\ldots, h_n$, where $h_i\in \mod(S)$ is the half twist about $\delta_i$ for each $i$.
\end{enumerate}
If $p$ is odd, then the sequence $\alpha_1$, $\alpha_3,\ldots,$ $\alpha_p$, $\alpha_2$, $\alpha_4,\ldots,$ $\alpha_{p-1}$ is a desired one.
If $p$ is even, then the sequence $\alpha_1$, $\alpha_3,\ldots,$ $\alpha_{p-1}$, $\alpha_2$, $\alpha_4,\ldots,$ $\alpha_p$, $\alpha_3$ is a desired one.
\end{proof}


\section{Class $\mathscr{A}$}\label{sec-c-a}

In Section \ref{sec-coup-rigid}, we will prove coupling rigidity of an amalgamated free product $\Gamma_1\ast_A\Gamma_2$ of discrete groups such that $\Gamma_1$ and $\Gamma_2$ belong to Class $\mathscr{F}$ and $A$ is an infinite, almost malnormal subgroup of both $\Gamma_1$ and $\Gamma_2$. 
It is however often difficult to find almost malnormal subgroups of a given group. 
We now introduce Class $\mathscr{A}$ of discrete groups, smaller than Class $\mathscr{F}$, so that coupling rigidity of $\Gamma_1\ast_A\Gamma_2$ will also be proved when $\Gamma_1$ and $\Gamma_2$ belong to $\mathscr{A}$ and a milder condition is imposed on the subgroup $A$. 
In this section, we discuss basic properties and examples of groups in Class $\mathscr{A}$.

\begin{defn}\label{defn-c-a}
We say that a discrete group $\Gamma$ belongs to Class $\mathscr{A}$ if $\Gamma$ is infinite and the following statement holds: Let $\Gamma \c (X, \mu)$ be a measure-preserving action on a standard probability space and $\mathcal{G}$ the associated groupoid. 
Suppose that we have a tree $T$ with $\partial T\neq \emptyset$ and a Borel cocycle $\rho \colon \Gamma \times X\rightarrow \aut^*(T)$ satisfying conditions (II) and (III) in Definition \ref{defn-c-f} and the following condition:
\begin{enumerate}
\item[(Ia)] For any three mutually distinct vertices $v_1, v_2, v_3\in V(T)$, the subgroupoid of $\cal{G}$ defined as
\[\{\, (\gamma, x)\in \mathcal{G}\mid \rho(\gamma, x)v_i=v_i \ \textrm{for\ any}\ i=1, 2, 3\, \}\]
is amenable.
\end{enumerate}
Then there exists a $(\cal{G}, \rho)$-invariant Borel map $\varphi \colon X\rightarrow V(T)$.
\end{defn}

\subsection{Fixed point maps}

Let $\mathcal{G}$ be a discrete measured groupoid on a standard probability space $(X, \mu)$. 
Suppose that we have a tree $T$ with $\partial T\neq \emptyset$ and a Borel cocycle $\rho \colon \cal{G}\rightarrow \aut^*(T)$. In this subsection, we introduce a canonical $\cal{G}$-invariant Borel map, called the fixed point map for $\cal{G}$. 
This map will be used to find examples of groups in Class $\mathscr{A}$.

Let $\cal{C}T$ denote the set of all non-empty connected subtrees in $T$, which can be identified with a subset of $P(V(T))$, the power set of $V(T)$.
We define a map
\[\xi \colon \cal{C}T\times \cal{C}T\rightarrow \cal{C}T\]
as follows: Pick $T_1, T_2\in \cal{C}T$.
If $T_1\cap T_2\neq \emptyset$, set $\xi(T_1, T_2)=T_1\cap T_2$. 
If $T_1\cap T_2=\emptyset$, define $\xi(T_1, T_2)$ as the shortest path in $T$ joining $T_1$ and $T_2$.

\begin{lem}
In the above notation,
\begin{enumerate}
\item the set $\cal{C}T$ is a Borel subset of $P(V(T))$, where $P(V(T))$ is equipped with the product topology.
\item the map $\xi \colon \cal{C}T\times \cal{C}T\rightarrow \cal{C}T$ is Borel.
\end{enumerate}
\end{lem}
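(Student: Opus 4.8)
The plan is to reduce both assertions to the single structural fact that $P(V(T))\cong\{0,1\}^{V(T)}$ is a standard Borel space whose Borel $\sigma$-algebra is generated by the membership maps $W\mapsto[v\in W]$, each of which is continuous. Since $V(T)$ is countable and each geodesic $l_u^v$ is finite, all the sets arising below will be countable Boolean combinations of the clopen sets $\{W:v\in W\}$, hence Borel; this is the only descriptive-set-theoretic input I will need.

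For assertion (i), I would first record the elementary tree-geometric fact that a non-empty subset $W\subset V(T)$ is the vertex set of a (necessarily unique) connected subtree if and only if it is geodesically convex, i.e.\ $l_u^v\subset W$ for all $u,v\in W$; this uses uniqueness of geodesics in a tree. Consequently
\[\cal{C}T=\Big(\bigcup_{v\in V(T)}\{W:v\in W\}\Big)\cap\bigcap_{u,v\in V(T)}\ \bigcap_{w\in l_u^v}\{W:\ u,v\in W\Rightarrow w\in W\}.\]
The first factor encodes non-emptiness and is open, being a countable union of clopen sets; each set in the second factor is clopen, and there are only countably many because $V(T)$ is countable and each $l_u^v$ is finite. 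Hence $\cal{C}T$ is Borel.

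The heart of assertion (ii) is the single identity
\[\xi(T_1,T_2)=\{\,w\in V(T)\mid d(w,T_1)+d(w,T_2)=d(T_1,T_2)\,\}\qquad\text{for all }T_1,T_2\in\cal{C}T,\]
which I would prove by tree geometry and which handles both cases of the definition of $\xi$ at once. When $T_1\cap T_2\neq\emptyset$ one has $d(T_1,T_2)=0$ and the right-hand side reduces to $T_1\cap T_2$. When $T_1\cap T_2=\emptyset$, convexity of $T_1$ and $T_2$ yields unique nearest-point projections and a unique bridge $l_a^b$ with $a\in T_1$, $b\in T_2$ realizing $d(T_1,T_2)$; using that the geodesic from any vertex $w$ to $T_2$ meets $T_1$ at the projection of $w$, together with the additivity $d(w,b)=d(w,a)+d(a,b)$ for $w\in T_1$, one checks that the distance sum equals $d(T_1,T_2)$ precisely when $w$ lies on $l_a^b$, so the right-hand side is exactly the shortest path joining $T_1$ and $T_2$. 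This identity simultaneously reconfirms that $\xi$ takes values in $\cal{C}T$.

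Granting the identity, Borelness of $\xi$ follows from the coordinatewise criterion: it suffices to show $(T_1,T_2)\mapsto[w\in\xi(T_1,T_2)]$ is Borel for each fixed $w$. For fixed $w$ and non-empty $W$ one has $d(w,W)\le k$ iff $W$ meets the countable set $\{v:d(w,v)\le k\}$, so $\{d(w,W)\le k\}=\bigcup_{d(w,v)\le k}\{v\in W\}$ is a countable union of clopen sets and $W\mapsto d(w,W)$ is Borel; likewise $(T_1,T_2)\mapsto d(T_1,T_2)$ is Borel. Therefore the set on which $d(w,T_1)+d(w,T_2)=d(T_1,T_2)$ is Borel, and by the identity this is exactly $\{w\in\xi(T_1,T_2)\}$, completing the proof. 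I expect the main obstacle to be the geometric verification of the displayed identity in the disjoint case, where the bridge must be pinned down via nearest-point projections; the bookkeeping with countable unions of clopen sets and the coordinatewise criterion is routine.
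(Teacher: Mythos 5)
Your proof is correct, and for assertion (ii) it takes a genuinely different route from the paper. For assertion (i) the two arguments are essentially identical: both characterize vertex sets of connected subtrees as the non-empty geodesically convex subsets of $V(T)$ and express this as a countable Boolean combination of the clopen membership sets (the paper writes a formula for $\cal{C}T\cup\{\emptyset\}$ indexed by finite geodesics, you handle non-emptiness by an explicit countable union; the difference is immaterial). For assertion (ii) the paper splits along the two clauses in the definition of $\xi$: on the Borel set $\cal{A}$ of intersecting pairs, $\xi$ is the intersection map, which is Borel coordinatewise, and off $\cal{A}$ it observes that $\xi$ takes values in the countable set of finite geodesics and computes each fiber $\xi^{-1}(\{l\})\setminus\cal{A}$ explicitly as the set of pairs $(T_1,T_2)$ with the $T_i$ contained in the two branches $S_1,S_2$ hanging off the endpoints of $l$; Borelness then follows because a map with countable range and Borel fibers is Borel. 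You instead establish the single identity $\xi(T_1,T_2)=\{\,w\mid d(w,T_1)+d(w,T_2)=d(T_1,T_2)\,\}$ and reduce everything to Borelness of the distance functions $W\mapsto d(w,W)$ and $(T_1,T_2)\mapsto d(T_1,T_2)$, whose sublevel sets are countable unions of clopen sets, together with the coordinatewise criterion for maps into $\{0,1\}^{V(T)}$. Your identity is correct --- the disjoint case rests on the uniqueness of the bridge between two disjoint convex subtrees, which your nearest-point-projection argument supplies --- and it has the side benefit of re-verifying that $\xi$ takes values in $\cal{C}T$. What each approach buys: the paper avoids proving any metric identity but must describe structurally which pairs have a prescribed bridge; your formula is more uniform and makes the measurability purely mechanical once the geometry is checked. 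Both descriptive-set-theoretic mechanisms (countable range with Borel fibers versus Borel coordinate functions) are equally legitimate here.
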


\begin{proof}
Let $\cal{L}$ be the set of geodesics in $T$ whose lengths are at least one and finite. 
For each $l\in \cal{L}$, we identify $l$ with the set of vertices of $T$ through which $l$ passes and denote by $\partial l$ the set of end vertices of $l$. 
We then have the equality
\[\cal{C}T\cup \{ \emptyset \}=\bigcap_{l\in \cal{L}}(\{\, R\in P(V(T))\mid \left|R\cap \partial l\right|<2\,\}\cup \{\, S\in P(V(T))\mid l\subset S\,\}),\]
which shows assertion (i).
It then follows that the set $\cal{A}$ consisting of all elements $(T_1, T_2)$ of $\cal{C}T\times \cal{C}T$ with $T_1\cap T_2\neq \emptyset$ is Borel and that the map $\xi$ is Borel on $\cal{A}$.

Pick $l\in \cal{L}$ and put $\partial l=\{ v_1, v_2\}$. 
For each $i=1, 2$, let $S_i$ be the maximal connected subtree of $T$ containing $v_i$ and no other vertices of $l$. 
The set $\xi^{-1}(\{ l\} )\setminus \cal{A}$ then consists of all elements $(T_1, T_2)$ of $\cal{C}T\times \cal{C}T$ with either $v_1\in T_1\subset S_1$ and $v_2\in T_2\subset S_2$ or $v_2\in T_1\subset S_2$ and $v_1\in T_2\subset S_1$. 
Assertion (ii) thus follows.
\end{proof}

We say that a pair $(v, A)$ of $v\in V(T)$ and a Borel subset $A$ of $X$ with positive measure is {\it $\cal{G}$-invariant} if there exists a countable Borel partition $A=\bigsqcup A_n$ such that for each $n$, the constant map on $A_n$ with its value $v$ is $\cal{G}$-invariant. 
For each $v\in V(T)$, there exists an essentially unique Borel subset $X_v$ of $X$ such that if $(v, A)$ is a $\cal{G}$-invariant pair, then $A$ is essentially contained in $X_v$.

The fixed point map for $\cal{G}$ is defined by collecting all $\cal{G}$-invariant pairs as follows. 
We define a Borel map
\[\theta=\theta(\cal{G}, \rho)\colon X\rightarrow P(V(T))\]
so that we have $v\in \theta(x)$ if $x\in X_v$. Let us call $\theta$ the {\it fixed point map} for $\cal{G}$.

\begin{lem}\label{lem-fix-point}
In the above notation, the following assertions hold:
\begin{enumerate}
\item If $A$ is a Borel subset of $X$ with positive measure, then the fixed point map for $(\cal{G})_A$ is equal to the restriction of $\theta$ to $A$.
\item If $\cal{S}$ is a normal subgroupoid of $\cal{G}$, then the fixed point map for $\cal{S}$ is $\cal{G}$-invariant.
\item We have $\theta(x)\in \cal{C}T\cup \{ \emptyset \}$ for a.e.\ $x\in X$. 
\end{enumerate}
\end{lem}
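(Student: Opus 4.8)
The plan is to prove the three assertions in order, since each is essentially a bookkeeping statement about the defining data of the fixed point map, with the real geometric content concentrated in assertion (iii).

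For assertion (i), I would argue directly from the definition of a $\cal{G}$-invariant pair. The key observation is that $\cal{G}$-invariance of a constant map on a piece $A_n$ is a condition only involving those $g\in \cal{G}$ with both endpoints in $A_n$, so when we restrict to $A$ the notion of invariant pair is unchanged for subsets of $A$. More precisely, if $(v,B)$ is a $(\cal{G})_A$-invariant pair with $B\subset A$, then it is a $\cal{G}$-invariant pair, and conversely any $\cal{G}$-invariant pair $(v,B)$ with $B\subset A$ is automatically $(\cal{G})_A$-invariant because the relevant groupoid elements already lie in $(\cal{G})_A$. Hence the maximal Borel set $X_v$ for $\cal{G}$, intersected with $A$, is the maximal set for $(\cal{G})_A$, and the two fixed point maps agree on $A$.

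For assertion (ii), I would use normality of $\cal{S}$ in $\cal{G}$ together with Definition \ref{defn-inv-map}. Let $\{\phi_n\}\subset {\rm End}_{\cal{G}}(\cal{S})$ be the countable family witnessing $\cal{S}\lhd\cal{G}$. Writing $\theta_{\cal{S}}$ for the fixed point map for $\cal{S}$, I want to show $\rho(g)\theta_{\cal{S}}(s(g))=\theta_{\cal{S}}(r(g))$ for a.e.\ $g\in\cal{G}$, i.e.\ that $v\in\theta_{\cal{S}}(s(g))$ iff $\rho(g)v\in\theta_{\cal{S}}(r(g))$. The strategy is: given a $\cal{S}$-invariant pair $(v,A)$ and a map $\phi$ in ${\rm End}_{\cal{G}}(\cal{S})$ with $r\circ\phi$ injective (as in the proof of Lemma \ref{lem-inv-s}(ii)), one transports $(v,A)$ by $\phi$ to produce a new $\cal{S}$-invariant pair $(\rho(\phi(x))v,\, r(\phi(A)))$, using condition (b) in the definition of ${\rm End}_{\cal{G}}(\cal{S})$ to check that the transported constant map is $\cal{S}$-invariant. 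Because the maximal sets $X_v$ absorb all invariant pairs, this shows $\theta_{\cal{S}}$ is carried into itself under the generating maps $[[\cal{G}]]_{\cal{S}}$, and since $\cal{G}$ is generated by these maps over $\cal{S}$, the invariance of $\theta_{\cal{S}}$ follows. This is a direct transcription of the normality argument already used in Lemma \ref{lem-inv-s}(ii).

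Assertion (iii) is the substantive one and I expect it to be the main obstacle. The claim is that for a.e.\ $x$, the fixed vertex set $\theta(x)$ is either empty or a connected subtree. Connectedness is the crux: if $u,v\in\theta(x)$ are fixed vertices but some intermediate vertex $w$ on the geodesic $l_u^v$ is not fixed, I must derive a contradiction. The idea is that a simplicial automorphism of $T$ with no inversion (condition (III)) that fixes two vertices $u$ and $v$ must fix the entire geodesic $l_u^v$ pointwise, since it fixes $u,v\in\Delta T$ and geodesics between fixed points are preserved; the no-inversion hypothesis guarantees that edge-midpoints are never swapped, so every vertex of $l_u^v$ is genuinely fixed rather than merely setwise preserved. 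Concretely, I would take a $\cal{G}$-invariant pair $(u,A)$ and a $\cal{G}$-invariant pair $(v,A')$, restrict to a common positive-measure piece $A\cap A'$, and observe that on this piece $\rho(g)$ fixes both $u$ and $v$ for the relevant groupoid elements, hence fixes each intermediate vertex $w$; this exhibits $(w,A\cap A')$ as a $\cal{G}$-invariant pair and forces $w\in\theta(x)$ a.e.\ there. Assembling this over the countably many vertices that can appear, using a measurable selection to handle the dependence of the geodesics on $x$, yields that $\theta(x)$ is connected whenever nonempty, so $\theta(x)\in\cal{C}T\cup\{\emptyset\}$ almost everywhere.
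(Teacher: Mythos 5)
Your proposal is correct and follows essentially the same route as the paper: assertion (i) from the coincidence of invariant pairs for $\cal{G}$ and $(\cal{G})_A$, assertion (ii) by transporting invariant pairs via the maps in ${\rm End}_{\cal{G}}(\cal{S})$ exactly as in Lemma \ref{lem-inv-s} (ii), and assertion (iii) from the fact that an automorphism fixing two vertices fixes every vertex of the geodesic between them. One minor remark: condition (III) is not needed in (iii) --- since the automorphism fixes $u$ and preserves distances, the image of an intermediate vertex $w$ is the unique vertex on $l_u^v$ at distance $d(u,w)$ from $u$, namely $w$ itself --- so your appeal to the no-inversion hypothesis is a harmless redundancy rather than a gap.
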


\begin{proof}
For each $v\in V(T)$ and each Borel subset $B$ of $A$ with positive measure, the pair $(v, B)$ is $(\cal{G})_A$-invariant if and only if it is $\cal{G}$-invariant. 
Assertion (i) thus follows. 
Assertion (ii) can be verified along an argument of the same kind as in the proof of Lemma \ref{lem-inv-s} (ii). 
Assertion (iii) holds because for any two vertices $u$, $v$ of $T$, any automorphism of $T$ fixing each of $u$ and $v$ fixes each vertex in the geodesic connecting $u$ and $v$.
\end{proof}


\subsection{Examples of groups in Class $\mathscr{A}$}

We first present hereditary properties of groups in Class $\mathscr{A}$.

\begin{prop}\label{prop-per-c-a}
Class $\mathscr{A}$ satisfies the following properties:
\begin{enumerate}
\item We have the inclusion $\mathscr{A}\subset \mathscr{F}$.
\item Let $\Gamma$ and $\Lambda$ be discrete groups with $\Lambda <\Gamma$ and $[\Gamma :\Lambda]<\infty$. 
Then $\Gamma \in \mathscr{A}$ if and only if $\Lambda \in \mathscr{A}$.
\item If $\Gamma$ and $\Lambda$ are discrete groups with $\Lambda \lhd \Gamma$ and $\Lambda \in \mathscr{A}$, then $\Gamma \in \mathscr{A}$.
\end{enumerate}
\end{prop}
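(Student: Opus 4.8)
The plan is to handle the three assertions in the order (i), (iii), (ii), reducing the finite-index statement to the normal one.

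Assertion (i) is immediate from the definitions. The only difference between Definitions \ref{defn-c-f} and \ref{defn-c-a} is that the hypothesis (I) of the former is weakened to (Ia) in the latter, and a finite groupoid is amenable, so (I) implies (Ia). Hence any cocycle $\rho$ satisfying the hypotheses (I), (II), (III) of Definition \ref{defn-c-f} a fortiori satisfies (Ia), (II), (III); if $\Gamma \in \mathscr{A}$, the defining property of $\mathscr{A}$ then produces the invariant map into $V(T)$ required for $\Gamma \in \mathscr{F}$. In particular $\mathscr{A} \subset \mathscr{F}$, so by Proposition \ref{prop-basic-c}(iii) no group in $\mathscr{A}$ is amenable, a fact I will use below.

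For assertion (iii), suppose $\Lambda \lhd \Gamma$ with $\Lambda \in \mathscr{A}$, and let $\rho \colon \Gamma \times X \to \aut^*(T)$ satisfy (Ia), (II), (III); write $\mathcal{G} = \Gamma \ltimes X$ and $\mathcal{N} = \Lambda \ltimes X$. Since the subgroupoids of $\mathcal{N}$ appearing in (Ia) and (II) are contained in the corresponding ones for $\mathcal{G}$, and amenability passes to subgroupoids, the restriction of $\rho$ to $\mathcal{N}$ still satisfies (Ia), (II), (III); as $\Lambda \in \mathscr{A}$, there is an $\mathcal{N}$-invariant Borel map $\psi \colon X \to V(T)$. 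I would then invoke the fixed point map $\theta = \theta(\mathcal{N}, \rho)$. On each set $\psi^{-1}(v)$ the constant map $v$ is $\mathcal{N}$-invariant, so $\psi(x) \in \theta(x)$ and in particular $\theta(x) \neq \emptyset$; by Lemma \ref{lem-fix-point}(iii) we have $\theta(x) \in \mathcal{C}T$ for a.e.\ $x$, and by Lemma \ref{lem-fix-point}(ii), together with $\mathcal{N} \lhd \mathcal{G}$, the map $\theta$ is $\mathcal{G}$-invariant. The remaining task is to extract from this $\mathcal{G}$-invariant field of subtrees a $\mathcal{G}$-invariant vertex.

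Here I would use that $\Gamma$, containing the non-amenable group $\Lambda$, is non-amenable, whence the probability-measure-preserving groupoid $\mathcal{G}$ is nowhere amenable. The reduction to $V(T)$ then splits according to the ends of $\theta(x)$, all of the relevant conditions being $\mathcal{G}$-invariant because $\theta$ is equivariant. If $\theta(x)$ has no ends (in particular if it is finite), then iterated removal of leaves, which extends the barycenter map $C \colon V_f(T) \to S(T)$ to this case, assigns to $\theta(x)$ a canonical vertex or edge and hence a $\mathcal{G}$-invariant Borel map into $S(T)$. If $\theta(x)$ has exactly one or two ends, those ends determine a $\mathcal{G}$-invariant Borel map into $\partial_2 T$ whose stabilizing subgroupoid contains the restriction of $\mathcal{G}$ to the set in question; condition (II) forces that subgroupoid to be amenable, contradicting nowhere amenability, so this case is null. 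Once a $\mathcal{G}$-invariant map into $S(T)$ is obtained, Lemma \ref{lem-inv-ext} and condition (III) upgrade it to a $\mathcal{G}$-invariant map into $V(T)$, exactly as in the proof of Proposition \ref{prop-basic-c}(ii), giving $\Gamma \in \mathscr{A}$. I expect the main obstacle to be the case in which $\theta(x)$ has three or more ends: when there are finitely many ends one extracts the finite tree of branch points and takes its barycenter, but the case of infinitely many ends must be excluded by combining amenability of the $\mathcal{G}$-action on the boundary of the invariant subtree (Theorem \ref{thm-ame-action}) with condition (II) and nowhere amenability, in the spirit of Lemmas \ref{lem-two-supp} and \ref{lem-inv-spe}; organizing this as the $\mathscr{A}$-analogue of those lemmas, with their finiteness conclusions replaced by ``finiteness or an amenable restriction'', is the technical heart of the argument.

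Finally, for assertion (ii), the forward implication follows the induction of Proposition \ref{prop-basic-c}(i): given $\Lambda \c (Y, \nu)$ and a cocycle $\tau$ satisfying (Ia), (II), (III), induce the action to $\Gamma \c (X, \mu)$ and define $\rho$ by the same coset formula. One checks that $\rho$ satisfies (Ia), (II), (III); for (Ia) the relevant three-vertex stabilizer restricts over each translate $\gamma_i Y$ to a copy of the corresponding stabilizer for $\tau$, which is amenable, and since the translates partition $X$ and amenability is preserved under passage between complete sections, the full stabilizer is amenable. As $\Gamma \in \mathscr{A}$, this yields a $\mathcal{G}$-invariant map into $V(T)$ whose restriction to $Y$ is the desired $\tau$-invariant map, so $\Lambda \in \mathscr{A}$. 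For the converse I would pass to the normal core $\Lambda_0 = \bigcap_{\gamma \in \Gamma} \gamma \Lambda \gamma^{-1}$, which is normal in $\Gamma$ and of finite index in $\Lambda$; the forward implication gives $\Lambda_0 \in \mathscr{A}$, and assertion (iii) then gives $\Gamma \in \mathscr{A}$.
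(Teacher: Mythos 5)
Parts (i) and (ii) of your proposal are correct and follow the paper's own route: (i) is immediate because a finite groupoid is amenable, and for (ii) the paper likewise induces the action for the forward implication and passes to the normal core for the converse. The problem is assertion (iii), where you replace the paper's argument by the fixed point map $\theta(\mathcal{N},\rho)$ and then try to extract a canonical vertex from the $\mathcal{G}$-invariant field of subtrees $x\mapsto\theta(x)$ by a case analysis on its ends. You explicitly leave the case of three or more (in particular infinitely many) ends unresolved, calling it the technical heart, and the strategy you sketch for it does not work: Theorem \ref{thm-ame-action} is a statement about a fixed tree equipped with a group action having finitely many vertex orbits and amenable stabilizers of distant vertex pairs, none of which is available for the $x$-dependent subtree $\theta(x)$; and condition (II) only controls maps into $\partial_2T$, i.e.\ at most two boundary points, so it gives nothing for a subtree with many ends unless you first produce a probability measure on its boundary, which would itself require an amenability you are trying to rule out.

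The missing observation, which is exactly how the paper uses condition (Ia), is that the case you are worried about is vacuous for an elementary reason: $\theta(x)$ contains at most two vertices for a.e.\ $x$. Indeed, since $\Lambda\in\mathscr{A}\subset\mathscr{F}$, Proposition \ref{prop-basic-c}(iii) shows $\Lambda$ is non-amenable, and since the action is measure-preserving the groupoid $\mathcal{N}=\Lambda\ltimes X$ is nowhere amenable. If three distinct vertices lay in $\theta(x)$ on a set of positive measure, then (as $V(T)$ is countable) some fixed triple $v_1,v_2,v_3$ would do so on a positive-measure set $B$; refining the countable partitions witnessing the invariant pairs $(v_i,B)$ yields a positive-measure piece on which $\mathcal{N}$ restricts into the three-point stabilizer subgroupoid of condition (Ia), hence is amenable --- a contradiction. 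This is precisely the paper's move: it reruns the maximization of Lemma \ref{lem-inv-s}(i) with condition (I) replaced by (Ia) and ``finite versus infinite type'' replaced by ``amenable versus nowhere amenable,'' obtaining directly a maximal $(\Lambda\ltimes X)$-invariant Borel map into $S(T)$, which is $(\Gamma\ltimes X)$-invariant by normality and is then upgraded to a $V(T)$-valued map via Lemma \ref{lem-inv-ext} and condition (III). With that one observation your argument closes; as written, it has a genuine hole at its central step.
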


\begin{proof}
Assertion (i) follows by definition. 
We prove assertion (iii). 
Let $\Gamma$ and $\Lambda$ be discrete groups with $\Lambda \lhd \Gamma$ and $\Lambda \in \mathscr{A}$. 
To prove $\Gamma \in \mathscr{A}$, pick an action $\Gamma \c (X, \mu)$ and a cocycle $\rho \colon \cal{G}\rightarrow \aut^*(T)$ as in Definition \ref{defn-c-a}, where we put $\cal{G}=\Gamma \ltimes X$.
It follows from $\Lambda \in \mathscr{A}$ that $\Lambda$ is non-amenable and there exists a $(\Lambda \ltimes X)$-invariant Borel map from $X$ into $V(T)$. 
Using condition (Ia) for $\rho$ and the method to construct the map $\varphi_0$ in Lemma \ref{lem-inv-s}, we can find a $(\Lambda \ltimes X)$-invariant Borel map $\varphi_0\colon X\rightarrow S(T)$ such that if $\varphi \colon X\rightarrow V(T)$ is a $(\Lambda \ltimes X)$-invariant Borel map, then the inclusion $\varphi(x)\subset \varphi_0(x)$ holds for a.e.\ $x\in X$. 
This $\varphi_0$ can be shown to be $(\Gamma \ltimes X)$-invariant since we have $\Lambda \lhd \Gamma$. 
Assertion (iii) then follows. 

Using assertion (iii), we can show assertion (ii) along the same argument as in the proof of Proposition \ref{prop-basic-c} (i). 
\end{proof}

The following proposition gives a sufficient condition for a discrete group to be in $\mathscr{A}$. 
For a group $G$ and subgroups $H_1$ and $H_2$ of $G$, let us write
\[H_1\bowtie H_2\]
if we have $H_1\lhd H$ and $H_2\lhd H$, where $H$ is the subgroup of $G$ generated by $H_1$ and $H_2$.

\begin{prop}\label{prop-ex-c-a}
A discrete group $\Gamma$ belongs to Class $\mathscr{A}$ if there are subgroups $N_1, N_2,\ldots, N_k$ of $\Gamma$ satisfying $k\geq 2$ and the following five conditions:
\begin{itemize}
\item For each $i=1,\ldots, k$, $N_i$ is non-amenable.
\item For each $i=1,\ldots, k-1$, we have $N_i\bowtie N_{i+1}$.
\item $N_1$ contains infinite amenable subgroups $N_1^1$ and $N_1^2$ which generate $N_1$ and satisfy $N_1^j\bowtie N_2$ for each $j=1, 2$.
\item For each $j=1, 2$, $N_1^j$ contains an infinite subgroup $M^j$ such that $M^j\bowtie N_2$, and $M^1$ and $M^2$ generate the free product of them.
\item $\Gamma$ is generated by $N_1, N_2,\ldots, N_k$.
\end{itemize}
\end{prop}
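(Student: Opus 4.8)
The plan is to imitate the structure of the proof of Proposition \ref{prop-ex-c}, but working with the fixed point map $\theta=\theta(\cal{G},\rho)$ introduced in Section \ref{sec-c-a} in place of the canonical $S(T)$-valued maps from Lemma \ref{lem-inv-s}, since in Class $\mathscr{A}$ the stabilizer condition is only (Ia) (amenability) rather than (I) (finiteness). Given an action $\Gamma\c(X,\mu)$ and a cocycle $\rho\colon\cal{G}\to\aut^*(T)$ satisfying (Ia), (II) and (III), with $\cal{G}=\Gamma\ltimes X$, I would set $\cal{N}_i=N_i\ltimes X$ and $\theta_i=\theta(\cal{N}_i,\rho)$, the fixed point map for $\cal{N}_i$. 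The goal is to produce a single $\cal{G}$-invariant Borel map into $V(T)$; by Lemma \ref{lem-inv-ext} and condition (III) it suffices to produce a $\cal{G}$-invariant map into $S(T)$, and in fact it is enough to show that $\theta_1$ takes values in $\cal{C}T\setminus\{\emptyset\}$ (a nonempty connected subtree) on a conull set, because then its barycenter via the map $C$ yields the required $S(T)$-valued map, and the relation $\bowtie$ will propagate invariance along the chain $N_1,\ldots,N_k$.

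The crux is the \emph{base case}, i.e. verifying that $\cal{N}_1$ already admits a $\cal{N}_1$-invariant Borel map into $V(T)$, equivalently that $\theta_1\neq\emptyset$ a.e. Here I would use the last two hypotheses on $N_1$. First, since $N_1^1$ and $N_1^2$ are infinite amenable, Proposition \ref{prop-ame-basic} gives $\cal{N}_1^j$-invariant Borel maps into $M(\Delta T)$ for $j=1,2$; by Lemma \ref{lem-two-supp}(ii) each splits over a partition into a part landing in $\partial_2 T$ and a part landing in $S(T)$. The condition that $M^1$ and $M^2$ generate a free product inside $N_1$ is what forces the $\partial_2T$-part to be inaccessible: a free product of two infinite groups acting so that both factors fix (a measure supported on) a pair of boundary points would force, via the ping-pong/attracting-fixed-point structure on the tree, a common fixed point or a contradiction with condition (II). Concretely I would argue that if $\cal{M}^1$ and $\cal{M}^2$ each stabilized a point of $\partial_2T$ on a positive-measure set, then using the free-product structure (so that $\cal{M}^1$ and $\cal{M}^2$ generate a subgroupoid on which $\rho$ behaves like a faithful action of $M^1\ast M^2$ on $T$) one produces an invariant Borel map into $S(T)$ from the configuration of the two boundary pairs, contradicting condition (II) applied to the generated subgroupoid. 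Hence the $S(T)$-part is conull for at least one of the $N_1^j$, giving a fixed vertex for $\cal{N}_1^j$ on a positive measure set.

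Once one $\cal{N}_1^j$ has nonempty fixed point set, I would use the relation $N_1^1\bowtie N_1^2$ with $N_1^1,N_1^2$ generating $N_1$: by Lemma \ref{lem-fix-point}(ii) the fixed point map for a normal subgroupoid is invariant under the ambient groupoid, so the $\cal{N}_1^j$-fixed subtree $\theta(\cal{N}_1^j,\rho)$ is invariant under the whole $\cal{N}_1$, and comparing $\theta(\cal{N}_1^1,\rho)$ with $\theta(\cal{N}_1^2,\rho)$ (both nonempty connected subtrees fixed by $\cal{N}_1$, once the free-product argument rules out the boundary alternative for both) one intersects them via the Borel map $\xi$ of Lemma on $\cal{C}T$ to obtain a nonempty $\cal{N}_1$-invariant subtree, hence $\theta_1\in\cal{C}T\setminus\{\emptyset\}$. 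This establishes $N_1\in\mathscr{A}$-type behaviour; taking the barycenter $C(\theta_1)$ gives a $\cal{N}_1$-invariant $S(T)$-valued map.

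Finally I would run the propagation along the chain exactly as in Proposition \ref{prop-ex-c}. Each $N_i$ is non-amenable and $N_i\bowtie N_{i+1}$; writing $\varphi_i=C\circ\theta_i$ and using Lemma \ref{lem-fix-point}(ii) together with the relation $\bowtie$, the map $\theta_i$ (and hence $\varphi_i$) is invariant under the subgroupoid generated by $\cal{N}_i$ and $\cal{N}_{i+1}$, which forces a chain of inclusions $\varphi_1(x)\subset\varphi_2(x)\subset\cdots\subset\varphi_k(x)$ that closes up into equality, so that all $\varphi_i$ agree a.e.\ and the common map is invariant under the groupoid generated by all $\cal{N}_i$, namely $\cal{G}=\Gamma\ltimes X$. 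Applying Lemma \ref{lem-inv-ext} and condition (III) then upgrades this $S(T)$-valued map to the required $V(T)$-valued $\cal{G}$-invariant map, proving $\Gamma\in\mathscr{A}$. \textbf{The main obstacle} I anticipate is the free-product step: making precise, at the level of groupoids and the cocycle $\rho$, that $M^1\ast M^2$ cannot preserve a measure on $\partial_2 T$ and that this is exactly what rules out the boundary alternative in Lemma \ref{lem-two-supp}(ii) for $\cal{N}_1$, so that condition (II) can be invoked to force the $S(T)$-valued alternative.
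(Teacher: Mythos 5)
Your overall architecture (amenable pieces $N_1^j$ give invariant measures on $\Delta T$, the boundary alternative is excluded, fixed-point subtrees are combined via $\xi$ and the barycenter map $C$, and invariance is propagated along the chain using $\bowtie$ and the canonicity of maximal invariant maps) matches the paper's proof, and your final propagation paragraph is essentially the paper's argument. But the middle of your proof has two genuine gaps, both located exactly where you flag ``the main obstacle''. First, you rule out the $\partial_2T$-alternative for $\cal{N}_1^j$ by an argument built on the free product $M^1\ast M^2$; as sketched this does not work: producing an invariant map into $S(T)$ from ``the configuration of the two boundary pairs'' is not a contradiction with condition (II), which concerns $\partial_2T$-valued maps, and the two boundary maps, one $\cal{M}^1$-invariant and one $\cal{M}^2$-invariant, are not jointly invariant under the generated subgroupoid, so no canonical configuration is available. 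The paper's mechanism is different and simpler: since $N_1^j\bowtie N_2$, a maximal $\cal{N}_1^j$-invariant map into $\partial_2T$ is automatically $\cal{N}_2$-invariant (the canonicity argument of Lemma \ref{lem-inv-spe}), and then condition (II) contradicts the non-amenability of $N_2$. Second, you write that you would use the relation $N_1^1\bowtie N_1^2$ to make the fixed subtrees $\theta(\cal{N}_1^1,\rho)$ and $\theta(\cal{N}_1^2,\rho)$ invariant under all of $\cal{N}_1$; this relation is not among the hypotheses (only $N_1^j\bowtie N_2$ is assumed), so your intended base case, that $\theta_1\neq\emptyset$ a.e., is not established. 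The paper never proves that $\cal{N}_1$ itself fixes a vertex; it anchors everything at $\cal{N}_2$, producing an $\cal{N}_2$-invariant map into $S(T)$ and propagating from there.

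Related to this, your proposal never uses the free product hypothesis where it is actually needed. On the set where $\theta(\cal{N}_1^1,\rho)$ and $\theta(\cal{N}_1^2,\rho)$ intersect, the intersection is a common fixed subtree which may be infinite, since condition (Ia) places no restriction on the fixed set of an amenable subgroupoid; hence $\xi$ followed by $C$ is not defined there. The paper's Lemma \ref{lem-m-non-ame} shows that $(\cal{M}^1)_{Z_n}\vee(\cal{M}^2)_{Z_n}$ is nowhere amenable precisely so that, by condition (Ia), its fixed set is a single simplex; this canonical simplex is then $\cal{N}_2$-invariant because $M^j\bowtie N_2$. On the complementary set, where the two fixed subtrees are disjoint, $\xi$ gives a finite path and your $C$-argument goes through. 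If you repair the base case as above (exclude the boundary alternative via $N_2$, and split into the intersecting and disjoint cases, using $M^1\ast M^2$ on the former), your propagation step completes the proof as in the paper.
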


To prove this proposition, we need the following:

\begin{lem}\label{lem-m-non-ame}
Let $M_1$ and $M_2$ be infinite amenable groups and define $M$ to be the free product $M_1\ast M_2$ of them. 
Suppose that we have a measure-preserving action $M\c (X, \mu)$ on a standard probability space, and let $\cal{M}$ denote the associated groupoid. 
For each $i=1, 2$, let $\cal{M}_i$ denote the subgroupoid of $\cal{M}$ associated to the action of $M_i$. 
Then for any Borel subset $A$ of $X$ with positive measure, the groupoid $(\cal{M}_1)_A\vee (\cal{M}_2)_A$ is non-amenable.  
\end{lem}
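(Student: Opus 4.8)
The plan is to argue by contradiction, using the Bass--Serre tree of the free product together with the invariant-measure machinery of Section~\ref{subsec-nat-maps}. Let $T$ be the Bass--Serre tree of $M=M_1\ast M_2$, on which $M$ acts with the two vertex orbits $M/M_1$ and $M/M_2$ and with trivial edge stabilizers; write $o_1=eM_1$ and $o_2=eM_2$ for the two adjacent base vertices fixed by $M_1$ and $M_2$. Composing $M\c(X,\mu)$ with the homomorphism $M\to\aut^*(T)$ gives a Borel cocycle $\rho\colon\mathcal{M}\to\aut^*(T)$, $\rho(m,x)=m$. Since any nontrivial element of $M$ fixing two distinct vertices would fix an edge and hence be trivial, $\rho$ satisfies condition~(I) of Definition~\ref{defn-c-f}: the subgroupoid fixing three (indeed two) distinct vertices is trivial. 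Set $\mathcal{R}_A=(\mathcal{M}_1)_A\vee(\mathcal{M}_2)_A$ and suppose, for contradiction, that $\mathcal{R}_A$ is amenable.

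First I would record that $\mathcal{R}_A$ is of infinite type almost everywhere on $A$. Indeed, for any infinite group and any finite-measure-preserving action the fibre $\{m\in M_1:mx\in A\}$ of $(\mathcal{M}_1)_A$ over a.e.\ $x\in A$ is infinite: on the aperiodic part of the $M_1$-action the restriction of the orbit relation to $A$ still has infinite classes, while on the part with finite $M_1$-orbits the stabilizer of $x$ is infinite and keeps $x$ inside $A$. Hence $(\mathcal{M}_1)_A\subset\mathcal{R}_A$, and a fortiori $\mathcal{R}_A$, is of infinite type, so Lemmas~\ref{lem-two-supp} and \ref{lem-inv-spe} apply to $\mathcal{R}_A$.

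Next, since $\mathcal{R}_A$ is amenable and $M$ acts continuously on the compact metrizable space $\Delta T$, Proposition~\ref{prop-ame-basic} yields an $\mathcal{R}_A$-invariant Borel map $\varphi\colon A\to M(\Delta T)$. By Lemma~\ref{lem-two-supp}(ii) I split $A=A_1\sqcup A_2$ and, by Lemma~\ref{lem-inv-spe}, obtain $\mathcal{R}_A$-invariant maps $\varphi_1\colon A_1\to\partial_2T$ and $\varphi_2\colon A_2\to S(T)$ (the latter via the barycentre map $C$). At least one of $A_1,A_2$ has positive measure, and the goal is to rule out both. This is where the free-product geometry enters: because $M_1$ permutes the edges issuing from $o_1$ freely, $M_1$ fixes no simplex other than $o_1$ and fixes no pair of ends, and symmetrically for $M_2$; since $o_1\neq o_2$, \emph{no} simplex of $T$ is fixed by both $M_1$ and $M_2$. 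An $\mathcal{R}_A$-invariant simplex- or end-pair-valued map therefore forces the realized ``return'' elements over a point, namely the isotropy $\{\,w\in M:(w,x)\in\mathcal{R}_A,\ wx=x\,\}$, to lie in an amenable stabilizer (a vertex stabilizer, the trivial edge stabilizer, or a virtually cyclic axis stabilizer), whereas alternating $M_1$- and $M_2$-excursions that return to $A$ should produce reduced words generating a nonabelian free subgroup of that isotropy. This incompatibility is the contradiction I am after.

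The hard part will be exactly this last step, and I expect it to be the main obstacle: I must convert the abstractly produced invariant map into the concrete statement that the in-$A$-realizable isotropy is non-amenable, while respecting the constraint that \emph{every} intermediate vertex of a word in $\mathcal{R}_A$ must stay in $A$. Note that the formal tools based on normality are unavailable here, precisely because in a free product the factors $(\mathcal{M}_1)_A$ and $(\mathcal{M}_2)_A$ are \emph{not} normal in $\mathcal{R}_A$, so Lemma~\ref{lem-inv-s}(ii) cannot simply upgrade the canonical simplex map. Instead I would handle the staying-in-$A$ constraint by a Poincar\'e-recurrence/ping-pong argument: using that both $(\mathcal{M}_1)_A$ and $(\mathcal{M}_2)_A$ are of infinite type, I can return to $A$ under $M_1$ and under $M_2$ and concatenate the resulting reduced words, then invoke Lemma~\ref{lem-inv-ext} to transport the invariant data along these returns. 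Pinning the value of $\varphi_i$ to the (nonexistent) $M$-fixed data of the free product then exposes the incompatibility and completes the proof.
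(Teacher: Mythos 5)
Your setup (Bass--Serre tree, amenability of $(\mathcal{M}_1)_A\vee(\mathcal{M}_2)_A$ producing an invariant map $A\to M(\Delta T)$ via Proposition \ref{prop-ame-basic}, infinite type via recurrence) matches the paper's, but the decisive step --- deriving a contradiction from that invariant map --- is missing, and you say so yourself. Worse, the mechanism you sketch for it is flawed: you propose to show that the isotropy $\{\,w\in M: (w,x)\in\mathcal{R}_A,\ wx=x\,\}$ contains a nonabelian free group generated by alternating returns. For an essentially free action this isotropy is trivial almost everywhere, so no such subgroup exists and no contradiction with amenable stabilizers can arise; non-amenability of $\mathcal{R}_A$ is a property of the groupoid (or orbit equivalence relation), not of its isotropy groups, and you cannot detect it this way. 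You also correctly observe that the route through Lemmas \ref{lem-two-supp} and \ref{lem-inv-spe} stalls because $(\mathcal{M}_i)_A$ is not normal in $\mathcal{R}_A$, so the canonical maps $\varphi_1,\varphi_2$ cannot be upgraded to $\mathcal{R}_A$-invariance of a single simplex or end-pair fixed by both factors --- but you offer no replacement for that step.

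The paper closes the gap by a \emph{uniqueness} statement rather than a structure statement: writing $v_1,v_2$ for the two adjacent vertices of $T$ stabilized by $M_1,M_2$, it uses the $M_i$-equivariant projection $L_i\colon \Delta T\setminus\{v_i\}\to{\rm Lk}(v_i)$ onto the link (on which $M_i$ acts freely and transitively, edge stabilizers being trivial) together with the infinitude of the return sets $\{m\in M_i: mx\in A\}$ to show that the constant map $x\mapsto\delta_{v_i}$ is the \emph{only} $(\mathcal{M}_i)_A$-invariant Borel map $A\to M(\Delta T)$: any mass off $v_i$ pushes forward to an invariant $\ell^1$-datum on a free infinite $M_i$-set, which cannot exist. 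An $\mathcal{R}_A$-invariant map is invariant under both factors, hence equals both $\delta_{v_1}$ and $\delta_{v_2}$ a.e., which is absurd since $v_1\neq v_2$. This bypasses the normality issue and the splitting into $\partial_2T$ and $S(T)$ entirely. To repair your argument you would need to prove this uniqueness (or an equivalent statement) for each factor; as written, the proposal does not contain a proof.
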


\begin{proof}
We define $T$ to be the Bass-Serre tree associated to the decomposition $M=M_1\ast M_2$ (see the paragraph right after Theorem \ref{thm-coup-rigid} for a definition of $T$).
Let $v_1, v_2\in V(T)$ denote the vertices whose stabilizers in $M$ is equal to $M_1$ and $M_2$, respectively. 
For each $i=1, 2$, let ${\rm Lk}(v_i)$ denote the set of vertices in the link of $v_i$ in $T$.
Let 
\[L_i\colon \Delta T\setminus \{ v_i\}\rightarrow {\rm Lk}(v_i)\]
be the $M_i$-equivariant map associating to each point $x$ in $\Delta T\setminus \{ v_i\}$ the single point in the intersection of ${\rm Lk}(v_i)$ with the geodesic between $x$ and $v_i$.
Using this map $L_i$ and the assumption that $M_i$ is infinite, we can show that for each $i=1, 2$, the constant map on $A$ with its value the Dirac measure on $v_i$ is the only $\cal{M}_i$-invariant Borel map from $A$ into $M(\Delta T)$.

We put $\cal{M}_0=(\cal{M}_1)_A\vee (\cal{M}_2)_A$.
If $\cal{M}_0$ were amenable, then there would exist an $\cal{M}_0$-invariant Borel map from $A$ into $M(\Delta T)$, which is $\cal{M}_1$-invariant and $\cal{M}_2$-invariant. 
This is a contradiction.
\end{proof}

\begin{proof}[Proof of Proposition \ref{prop-ex-c-a}]
Let $\Gamma \c (X, \mu)$ be a measure-preserving action on a standard probability space and $\mathcal{G}$ the associated groupoid. 
Let $T$ be a tree with $\partial T\neq \emptyset$ and $\rho \colon \Gamma \times X\rightarrow \aut^*(T)$ a Borel cocycle satisfying condition (Ia) in Definition \ref{defn-c-a} and conditions (II) and (III) in Definition \ref{defn-c-f}. 
We set
\[\cal{N}_i=N_i\ltimes X,\quad \cal{N}_1^j=N_1^j\ltimes X\]
for each $i=1,\ldots, k$ and each $j=1, 2$. 

\begin{claim}\label{claim-inv-n}
For each $j=1, 2$, there exists an $\cal{N}_1^j$-invariant Borel map from $X$ into $V(T)$.
\end{claim}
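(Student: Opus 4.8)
The plan is to produce the $\cal{N}_1^j$-invariant Borel map by exploiting the free subgroup $M^1\ast M^2$ inside $N_1^j$ together with the amenability hypotheses, and then promoting the invariance up to $N_1^j$. First I would fix $j$ and consider the groupoid $\cal{N}_1^j=N_1^j\ltimes X$ with the restricted cocycle $\rho$. Since $N_1^1$ and $N_1^2$ are infinite amenable and generate $N_1$, and since $M^1$ and $M^2$ generate a free product $M^1\ast M^2$ sitting inside the $N_1^j$, the key tool is Lemma \ref{lem-m-non-ame}: applied to $M=M^1\ast M^2$, it shows that for every positive-measure Borel subset $A$ of $X$, the join $(\cal{M}^1)_A\vee(\cal{M}^2)_A$ is non-amenable, where $\cal{M}^i=M^i\ltimes X$. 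This non-amenability is what blocks the appearance of boundary-valued invariant maps.

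Next I would locate an invariant map into $M(\Delta T)$ using amenability of the amenable pieces. Because each $N_1^j$ is amenable, the groupoid $\cal{N}_1^j$ is amenable, so by Proposition \ref{prop-ame-basic} there exists an $\cal{N}_1^j$-invariant Borel map $\varphi\colon X\rightarrow M(\Delta T)$. I would then apply Lemma \ref{lem-two-supp}(ii) to obtain an essentially unique Borel partition $X=A_1\sqcup A_2$ so that on $A_1$ the measure $\varphi(x)$ is supported on $\partial T$ and on $A_2$ it is supported on $V(T)$; Lemma \ref{lem-inv-spe} upgrades these to an $\cal{N}_1^j$-invariant map $\varphi_1\colon A_1\rightarrow\partial_2T$ and $\varphi_2\colon A_2\rightarrow S(T)$. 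The goal is to rule out $A_1$ up to null sets, after which $\varphi_2$, combined with Lemma \ref{lem-inv-ext} and condition (III), yields the desired $V(T)$-valued invariant map on all of $X$.

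The main obstacle—and the heart of the argument—is eliminating $A_1$. Suppose $\mu(A_1)>0$. On $A_1$ we have the $\cal{N}_1^j$-invariant map $\varphi_1$ into $\partial_2T$, and since $M^1,M^2<N_1^j$, the restriction $\varphi_1|_{A_1}$ is also $(\cal{M}^1)_{A_1}$-invariant and $(\cal{M}^2)_{A_1}$-invariant, hence invariant under the join $(\cal{M}^1)_{A_1}\vee(\cal{M}^2)_{A_1}$. An invariant map into $\partial_2T$ forces this join to be amenable: concretely, the stabilizer of a point of $\partial_2T$ acts with amenable behavior, and more directly, the existence of a $\partial_2T$-valued invariant map for a groupoid, via condition (II) for $\rho$, exhibits that join as a subgroupoid of an amenable one. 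This contradicts Lemma \ref{lem-m-non-ame}. Therefore $\mu(A_1)=0$ and $X=A_2$ up to null sets. I expect the delicate point to be checking precisely that a $\partial_2T$-valued invariant map entails amenability of $(\cal{M}^1)_{A_1}\vee(\cal{M}^2)_{A_1}$; here condition (II) in Definition \ref{defn-c-f} is designed exactly to deliver this, since it asserts that any subgroupoid respecting a $\partial_2T$-valued map is amenable.

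Finally, having established $X=A_2$, I would feed the $\cal{N}_1^j$-invariant map $\varphi_2\colon X\rightarrow S(T)$ into Lemma \ref{lem-inv-ext} together with condition (III) (no inversions) to extract an $\cal{N}_1^j$-invariant Borel map into $V(T)$: the point is that an invariant simplex that is an edge, in the absence of inversions, has an invariantly chosen endpoint, while a vertex-valued simplex already lands in $V(T)$. This produces the map claimed in Claim \ref{claim-inv-n}. The same reasoning runs symmetrically for $j=1$ and $j=2$, so both invariant maps exist, completing the proof.
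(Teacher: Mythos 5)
There are two genuine gaps here, and the second one is fatal to your main step. First, Lemmas \ref{lem-two-supp} and \ref{lem-inv-spe}, which you invoke to produce the partition $X=A_1\sqcup A_2$ and the maps $\varphi_1$, $\varphi_2$, both require condition (I) of Definition \ref{defn-c-f}: their proofs rule out invariant maps into $\delta V(T)$ by citing finiteness of the three-vertex stabilizer subgroupoids. In the setting of Proposition \ref{prop-ex-c-a} the cocycle satisfies only the weaker condition (Ia) (amenable stabilizers), and since $\cal{N}_1^j$ is itself amenable there is no contradiction in its admitting an invariant map into $\delta V(T)$; so the dichotomy ``support of size at most two in $\partial T$ versus support in $V(T)$'' is not available to you as stated. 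The paper instead runs a maximality argument from scratch: among all $\cal{N}_1^j$-invariant maps $\varphi\colon X\to M(\Delta T)$ it picks one maximizing the measure of $\{\,x\mid \varphi(x)(V(T))>0\,\}$, and disposes of the case $|{\rm supp}(\psi(x))|\geq 3$ on the purely boundary part not via condition (I) but by noting that $\delta C$ and $C$ would then manufacture a vertex-valued invariant map, contradicting maximality.

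Second, and more seriously, your elimination of $A_1$ rests on the assertion $M^1, M^2<N_1^j$, which is false and indeed impossible: the hypotheses of Proposition \ref{prop-ex-c-a} place $M^j$ inside $N_1^j$, so $M^1<N_1^1$ and $M^2<N_1^2$, and the free product $M^1\ast M^2$ lives only in $N_1=\langle N_1^1, N_1^2\rangle$ --- it cannot sit inside a single $N_1^j$ because $N_1^j$ is amenable while $M^1\ast M^2$ is not. Consequently the $\partial_2T$-valued map on $A_1$ is invariant only under $\cal{M}^j$ for the one relevant $j$, not under $(\cal{M}^1)_{A_1}\vee(\cal{M}^2)_{A_1}$, and Lemma \ref{lem-m-non-ame} gives you nothing at this stage. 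The paper's actual mechanism is different: having arranged the boundary-valued map canonically (by maximality, as in Lemma \ref{lem-inv-spe}), it uses the normality hypothesis $N_1^j\bowtie N_2$ to promote it to an $\cal{N}_2$-invariant map into $\partial_2T$, and then condition (II) together with the non-amenability of $N_2$ (the action being measure-preserving, $\cal{N}_2$ is nowhere amenable) yields the contradiction. Lemma \ref{lem-m-non-ame} enters only later in the proof of Proposition \ref{prop-ex-c-a}, in the treatment of the set where the fixed point sets of $\cal{N}_1^1$ and $\cal{N}_1^2$ meet. Your opening step (amenability of $\cal{N}_1^j$ giving a map into $M(\Delta T)$) and your closing step (Lemma \ref{lem-inv-ext} plus condition (III) to pass from $S(T)$ to $V(T)$) are fine.
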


\begin{proof}
We prove the claim in the case of $j=1$. 
A verbatim argument can be applied in the other case.
Let $I$ be the set of all $\cal{N}_1^1$-invariant Borel maps from $X$ into $M(\Delta T)$. 
Since $N_1^1$ is amenable, the set $I$ is non-empty. For each $\varphi \in I$, we put
\[S_{\varphi}=\{\, x\in X\mid \varphi(x)(V(T))>0\, \}.\]
There exists $\psi \in I$ with $\mu(S_{\psi})=\sup_{\varphi \in I}\mu(S_{\varphi})$. We then have $\psi(x)(\partial T)=1$ for a.e.\ $x\in Y$, where we put $Y=X\setminus S_{\psi}$.

Assuming $\mu(Y)>0$, we deduce a contradiction. 
If we furthermore assume that there exists a Borel subset $A$ of $Y$ with positive measure and $\left|{\rm supp}(\psi(x))\right|\geq 3$ for each $x\in A$, then we can find an $\cal{N}_1^1$-invariant Borel map from $A$ into $V(T)$ by using the maps $\delta C\colon \delta T\rightarrow \delta V(T)$ and $C\colon V_f(T)\rightarrow S(T)$ introduced in Section \ref{subsec-nat-maps}, and this contradicts the maximality of $S_{\psi}$. 
We thus see that ${\rm supp}(\psi(x))$ consists of at most two points for a.e.\ $x\in Y$ and that $\psi$ induces an $\cal{N}_1^1$-invariant Borel map from $Y$ into $\partial_2T$. 
As in the proof of Lemma \ref{lem-inv-spe}, we can then find an $\cal{N}_2$-invariant map from $Y$ into $\partial_2T$. 
Since $N_2$ is non-amenable, this contradicts condition (II) for $\rho$.
\end{proof}

For each $j=1, 2$, let $\theta_1^j\colon X\rightarrow \cal{C}T\cup \{ \emptyset \}$ be the fixed point map for $\cal{N}_1^j$. 
Note that Claim \ref{claim-inv-n} implies $\theta_1^j(x)\in \cal{C}T$ for a.e.\ $x\in X$. 
We set
\[Z=\{\, x\in X\mid \theta_1^1(x)\cap \theta_1^2(x)\neq \emptyset \,\},\quad W=X\setminus Z.\]

\begin{claim}
There exists an $\cal{N}_2$-invariant Borel map from $X$ into $S(T)$.
\end{claim}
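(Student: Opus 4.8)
The goal is to produce an $\cal{N}_2$-invariant Borel map into $S(T)$ using the two fixed point maps $\theta_1^1$ and $\theta_1^2$ for the subgroupoids $\cal{N}_1^1$ and $\cal{N}_1^2$. The natural strategy is to analyze the two complementary pieces $Z$ and $W$ of $X$ separately, since on each piece a different geometric construction is available. The key structural input is that $N_1^1$ and $N_1^2$ generate $N_1$, so $\cal{N}_1 = \cal{N}_1^1 \vee \cal{N}_1^2$, and both $N_1^j$ satisfy $N_1^j \bowtie N_2$, meaning $N_1^j$ and $N_2$ are both normal in the group they generate. The latter fact is what will let us upgrade $\cal{N}_1^j$-invariance to $\cal{N}_2$-invariance, via Lemma \ref{lem-fix-point} (ii).

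First I would treat the set $Z$, where $\theta_1^1(x)\cap\theta_1^2(x)\neq\emptyset$. Since $\theta_1^j(x)\in\cal{C}T$ (a non-empty connected subtree) for a.e.\ $x$ by Claim \ref{claim-inv-n} and Lemma \ref{lem-fix-point} (iii), the intersection $\theta_1^1(x)\cap\theta_1^2(x)$ is again a non-empty connected subtree on $Z$. By Lemma \ref{lem-fix-point} (ii), each $\theta_1^j$ is $\cal{N}_1$-invariant (since $\cal{N}_1^j \lhd \cal{N}_1$), hence in particular each is $\cal{N}_1^{3-j}$-invariant, so the intersection map $x\mapsto\theta_1^1(x)\cap\theta_1^2(x)$ is $\cal{N}_1$-invariant on $Z$. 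On $W$, where the two subtrees are disjoint, I would instead use the Borel map $\xi$ joining them: $x\mapsto\xi(\theta_1^1(x),\theta_1^2(x))$ is the shortest path between the two subtrees, which is a finite geodesic and hence lies in $V_f(T)$; this map is likewise $\cal{N}_1$-invariant on $W$ by equivariance of $\xi$ and $\cal{N}_1$-invariance of each $\theta_1^j$. In both cases, composing with the barycenter map $C\colon V_f(T)\rightarrow S(T)$ (after observing the output is a non-empty finite subtree, so we may pass to its finite vertex set) yields an $\cal{N}_1$-invariant Borel map from $X$ into $S(T)$.

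The final and most delicate step is to promote $\cal{N}_1$-invariance to $\cal{N}_2$-invariance. Here I would invoke the hypothesis $N_1^j\bowtie N_2$ together with $N_1\bowtie N_2$ (which follows since $N_1^1,N_1^2$ generate $N_1$ and each is normal in the group generated with $N_2$). Let $H$ be the subgroup generated by $N_1$ and $N_2$; then $\cal{N}_1 = N_1\ltimes X$ is a normal subgroupoid of $\cal{H}=H\ltimes X$. The map just constructed is the image under $C$ of the fixed point map for $\cal{N}_1$, and by Lemma \ref{lem-fix-point} (ii) that fixed point map is $\cal{H}$-invariant, hence in particular $\cal{N}_2$-invariant; since $C$ is $\aut^*(T)$-equivariant, the resulting map into $S(T)$ is $\cal{N}_2$-invariant as well. \emph{The main obstacle} I anticipate is verifying carefully that the distinct geometric constructions on $Z$ and on $W$ glue along the partition into a single globally Borel, $\cal{N}_1$-invariant map — in particular that $Z$ and $W$ are each $\cal{N}_2$-invariant Borel sets (which follows because the intersection-versus-disjointness dichotomy is preserved under the $\cal{N}_1$-action on the pair $(\theta_1^1,\theta_1^2)$), so that invariance can be checked piecewise and then combined.
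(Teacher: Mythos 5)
Your treatment of $W$ matches the paper: the restrictions of $\theta_1^1$ and $\theta_1^2$ to $W$ are $\cal{N}_2$-invariant by Lemma \ref{lem-fix-point} (ii) (using $N_1^j\bowtie N_2$), the bridge $\xi(\theta_1^1(x),\theta_1^2(x))$ is a finite path, and composing with $C$ works. The problem is on $Z$. You propose to take the intersection $\theta_1^1(x)\cap \theta_1^2(x)$ and apply the barycenter map $C$, ``after observing the output is a non-empty finite subtree.'' That observation is exactly the point at issue and you give no argument for it. The sets $\theta_1^j(x)$ are fixed point sets of \emph{amenable} subgroupoids; condition (Ia) only says that the stabilizer of three distinct vertices is amenable, so nothing prevents $\theta_1^1(x)$, $\theta_1^2(x)$, and their intersection from being infinite subtrees. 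The map $C$ is defined only on $V_f(T)$, so your construction on $Z$ does not go through as written.

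The finiteness (in fact: the existence of a canonical invariant \emph{simplex}) is true, but proving it is where the hypotheses you never touch come in. The paper partitions $Z=\bigsqcup_n Z_n$ with a common fixed vertex $v_n$ for $\cal{N}_1^1$ and $\cal{N}_1^2$ on $Z_n$, forms $\cal{M}_n=(\cal{M}^1)_{Z_n}\vee(\cal{M}^2)_{Z_n}$, and invokes Lemma \ref{lem-m-non-ame} --- which uses precisely that $M^1$ and $M^2$ generate their free product --- to get that $\cal{M}_n$ is \emph{nowhere amenable}. Combined with condition (Ia), the maximality argument of Lemma \ref{lem-inv-s} (i) then yields a canonical $\cal{M}_n$-invariant map $Z_n\to S(T)$ (any invariant configuration of three or more vertices would force a restriction of the nowhere amenable $\cal{M}_n$ into an amenable stabilizer). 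Its canonicity, together with $\cal{M}_n\lhd\cal{M}_n\vee(\cal{N}_2)_{Z_n}$ (from $M^j\bowtie N_2$), gives $\cal{N}_2$-invariance. A proof of this claim that makes no use of $M^1$, $M^2$, or Lemma \ref{lem-m-non-ame} is missing the essential mechanism; you should either supply the nowhere-amenability argument to justify finiteness of the intersection, or replace the barycenter-of-intersection step by the maximal-fixed-simplex construction. (Your closing step, viewing the whole map as $C$ applied to the fixed point map for $\cal{N}_1$, also breaks down on $W$, where that fixed point set is empty; but that part is easily repaired by arguing $\cal{N}_2$-invariance of each $\theta_1^j$ directly.)
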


\begin{proof}
We can find a countable Borel partition $Z=\bigsqcup_nZ_n$ and a vertex $v_n\in V(T)$ such that for each $n$, the constant map on $Z_n$ with its value $v_n$ is $\cal{N}_1^1$-invariant and $\cal{N}_1^2$-invariant. 
We set
\[\cal{M}^j=M^j\ltimes X,\quad \cal{M}_n=(\cal{M}^1)_{Z_n}\vee (\cal{M}^2)_{Z_n}\]
for each $j=1, 2$ and each $n$. 
Since we have $M^j<N_1^j$ for each $j=1, 2$, for each $n$, there exists an $\cal{M}_n$-invariant Borel map from $Z_n$ into $V(T)$. 
Lemma \ref{lem-m-non-ame} shows that $\cal{M}_n$ is nowhere amenable. 
Along an argument of the same kind as in the proof of Lemma \ref{lem-inv-s} (i), using condition (Ia) for $\rho$, we can construct a Borel map $\varphi_n\colon Z_n\rightarrow S(T)$ which is $\cal{M}_n$-invariant and $\cal{N}_2$-invariant since we have $\cal{M}_n\lhd \cal{M}_n\vee (\cal{N}_2)_{Z_n}$ for each $n$. 
We therefore obtain an $\cal{N}_2$-invariant Borel map $\varphi_0\colon Z\rightarrow S(T)$.

On the other hand, the restrictions of $\theta_1^1$ and $\theta_1^2$ to $W$ are $\cal{N}_2$-invariant by Lemma \ref{lem-fix-point} (ii). 
The map $\varphi_1\colon W\rightarrow \cal{C}T$ defined by $\varphi_1(x)=\xi(\theta_1^1(x), \theta_1^2(x))$ for each $x\in W$ is also $\cal{N}_2$-invariant. 
Note that for a.e.\ $x\in W$, the tree $\varphi_1(x)$ consists of at most finitely many vertices.
Using this $\varphi_1$ and the map $C\colon V_f(T)\rightarrow S(T)$ introduced in Section \ref{subsec-nat-maps}, we obtain an $\cal{N}_2$-invariant Borel map from $W$ into $S(T)$.
\end{proof}

Along an argument of the same kind as in the proof of Lemma \ref{lem-inv-s} (i), using nowhere amenability of $\cal{N}_2$ and condition (Ia) for $\rho$, we can find an $\cal{N}_2$-invariant Borel map $\psi_0\colon X\rightarrow S(T)$ satisfying the following condition: If $A$ is a Borel subset of $X$ with positive measure and if $\psi \colon A\rightarrow V(T)$ is an $\cal{N}_2$-invariant Borel map, then we have the inclusion $\psi(x)\subset \psi_0(x)$ for a.e.\ $x\in A$. 
Let us call this condition for $\psi_0$ the {\it maximal condition} as an $\cal{N}_2$-invariant Borel map.

We now prove that $\psi_0$ is $\cal{G}$-invariant, and this concludes the proposition. 
As in Lemma \ref{lem-inv-s} (ii), the maximal condition implies that $\psi_0$ is also $\cal{N}_1$-invariant and $\cal{N}_3$-invariant because we have $\cal{N}_2\lhd \cal{N}_2\vee \cal{N}_i$ for each $i=1, 3$. 
As in the previous paragraph, we can find an $\cal{N}_3$-invariant Borel map $\psi_3\colon X\rightarrow S(T)$ with the maximal condition as an $\cal{N}_3$-invariant Borel map. 
The map $\psi_3$ is then $\cal{N}_2$-invariant because we have $\cal{N}_3\lhd \cal{N}_2\vee \cal{N}_3$. 
The maximal conditions for $\psi_0$ and $\psi_3$ imply the equality $\psi_0(x)=\psi_3(x)$ for a.e.\ $x\in X$. 
Repeating this argument, we conclude that $\psi_0$ is $\cal{N}_i$-invariant for each $i=1, 2,\ldots, k$ and is $\cal{G}$-invariant because $\Gamma$ is generated by $N_1, N_2\ldots, N_k$.
\end{proof}

We present applications of Proposition \ref{prop-ex-c-a}.

\begin{prop}
If $\Gamma_1,\ldots, \Gamma_k$ are non-amenable discrete groups with $k\geq 2$ and if $\Gamma_1$ contains a subgroup isomorphic to a non-abelian free group, then the direct product $\Gamma_1\times \cdots \times \Gamma_k$ belongs to Class $\mathscr{A}$.
\end{prop}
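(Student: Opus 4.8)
The plan is to apply Proposition \ref{prop-ex-c-a} directly, so the whole task reduces to exhibiting subgroups $N_1,\dots,N_m$ of $\Gamma=\Gamma_1\times\cdots\times\Gamma_k$ (with $m\geq 2$), together with the auxiliary subgroups $N_1^1,N_1^2,M^1,M^2$, satisfying the five listed conditions. The guiding observation is that in a direct product any two subgroups lying in distinct factors commute elementwise, so the subgroup they generate is their internal direct product, in which each factor is normal; hence $H\bowtie K$ holds automatically whenever $H\leq \Gamma_i$ and $K\leq \Gamma_j$ with $i\neq j$. This makes every $\bowtie$-relation I need essentially free, and the only genuine input is the non-abelian free subgroup of $\Gamma_1$, which I use to manufacture an $N_1$ of the required special form.

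Concretely, I would fix a non-abelian free subgroup $F=\langle a,b\rangle\leq \Gamma_1$ and set $N_1=F$, $N_1^1=M^1=\langle a\rangle$ and $N_1^2=M^2=\langle b\rangle$. Then $N_1^1,N_1^2$ are infinite amenable (infinite cyclic), they generate $N_1=F$, and $M^1,M^2$ generate $\langle a\rangle\ast\langle b\rangle=F$, the free product of them; this is exactly where the free-subgroup hypothesis enters. For the remainder of the chain I would take $N_2=\Gamma_2,\ N_3=\Gamma_3,\dots,\ N_k=\Gamma_k$, and crucially append $N_{k+1}=\Gamma_1$, so that $m=k+1\geq 3$.

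The verification then splits cleanly. Non-amenability of each $N_i$ is immediate: $F$ is non-amenable and each $\Gamma_i$ is non-amenable by hypothesis. Every consecutive relation $N_i\bowtie N_{i+1}$ holds by the commuting observation, since $N_1=F$ and $N_2=\Gamma_2$ lie in distinct factors, $N_i=\Gamma_i$ and $N_{i+1}=\Gamma_{i+1}$ lie in distinct factors for $2\leq i\leq k-1$, and $N_k=\Gamma_k$ and $N_{k+1}=\Gamma_1$ lie in distinct factors. The same observation gives $N_1^j\bowtie N_2$ and $M^j\bowtie N_2$ for $j=1,2$, because $N_1^j,M^j\leq \Gamma_1$ while $N_2=\Gamma_2$. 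Finally $\langle N_1,\dots,N_{k+1}\rangle$ contains $\Gamma_1$ (as $N_{k+1}=\Gamma_1$) together with each $\Gamma_i$ for $i\geq 2$, hence equals $\Gamma$, which settles the generation condition.

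I do not expect a serious obstacle here; the content lies entirely in choosing the subgroups correctly. The one point that requires care is that $F$ may be a proper subgroup of $\Gamma_1$, so the naive guess $N_i=\Gamma_i$ fails the third condition: $\Gamma_1$ need not be generated by two amenable subgroups. This is precisely why $N_1$ is taken to be $F$ rather than $\Gamma_1$, and why $\Gamma_1$ must be reinstated as the final term $N_{k+1}$ in order to recover all of $\Gamma$. I would also record that non-consecutive pairs, in particular $N_1=F$ and $N_{k+1}=\Gamma_1$, are never required to satisfy $\bowtie$; this is harmless because $F$ need not be normal in $\Gamma_1$, and those two indices differ by $k\geq 2$.
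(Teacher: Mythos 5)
Your proposal is correct and is essentially identical to the paper's own proof: the paper also takes $N_1$ to be the free group generated by two infinite cyclic subgroups $M^1=N_1^1$, $M^2=N_1^2$ of $\Gamma_1$, sets $N_i=\Gamma_i$ for $i=2,\dots,k$, and appends $N_{k+1}=\Gamma_1$ before invoking Proposition \ref{prop-ex-c-a}. Your verification of the $\bowtie$-relations via commuting subgroups in distinct factors is exactly the intended (and in the paper, unstated) justification.
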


\begin{proof}
Setting $\Gamma =\Gamma_1\times \cdots \times \Gamma_k$, we identify each $\Gamma_i$ with a subgroup of $\Gamma$.
Let $M^1$ and $M^2$ be infinite cyclic subgroups of $\Gamma_1$ which generate the non-abelian free group of rank two.
We put $N_1^1=M^1$ and $N_1^2=M^2$ and define $N_1$ as the group generated by $M^1$ and $M^2$.
For each $i=2,\ldots, k$, we put $N_i=\Gamma_i$ and put $N_{k+1}=\Gamma_1$.
Applying Proposition \ref{prop-ex-c-a}, we see that $\Gamma$ belongs to $\mathscr{A}$.
\end{proof}

\begin{prop}\label{prop-c-a-mcg}
Let $S$ be a surface of genus $g$ with $p$ boundary components. 
If $3g+p\geq 7$ and $(g, p)\neq (2, 1)$, then any finite index subgroup of $\mod^*(S)$ belongs to Class $\mathscr{A}$.
\end{prop}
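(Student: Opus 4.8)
The plan is to verify that any finite index subgroup of $\mod^*(S)$ with $3g+p\geq 7$ and $(g,p)\neq(2,1)$ satisfies the hypotheses of Proposition \ref{prop-ex-c-a}. By Proposition \ref{prop-per-c-a} (ii), it suffices to show that $\pmod(S)$ (when $g\geq 1$) or $\mod(S)$ (when $g=0$) belongs to $\mathscr{A}$, since these are finite index subgroups of $\mod^*(S)$. The core task is therefore to exhibit subgroups $N_1,\ldots,N_k$ of the pure (or full) mapping class group together with the distinguished subgroups $N_1^1, N_1^2, M^1, M^2$ realizing all five bullet-point conditions of Proposition \ref{prop-ex-c-a}.

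First I would build the candidate subgroups geometrically. For each vertex $N_i$ I intend to take a stabilizer-type subgroup supported on a subsurface: given an essential curve or multicurve $\delta$, the subgroup $N_\delta$ will be generated by mapping classes supported in a neighborhood or complement controlled by $\delta$, arranged so that consecutive $N_i, N_{i+1}$ commute after passing to normal closures, giving $N_i\bowtie N_{i+1}$. The relation $H_1\bowtie H_2$ holds in particular when $H_1$ and $H_2$ commute elementwise, so disjointness of supporting subsurfaces is the main tool for producing it. For the special first vertex, I would choose two disjoint essential curves $c_1, c_2$ (with $I(c_1,c_2)=0$) and set $N_1^1, N_1^2$ to be the infinite cyclic groups generated by the respective Dehn twists, or better, subgroups supported on the two sides so that each is infinite amenable and commutes with $N_2$; then $M^1, M^2$ would be pseudo-Anosov (or pure) elements on disjoint subsurfaces whose supports fill enough of $S$ that no nonzero intersection relation forces them to commute, yielding a free product $M^1\ast M^2$. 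Non-amenability of each $N_i$ will follow from the genus/boundary bound $3g+p\geq 7$ ensuring the relevant supporting subsurfaces are complicated enough (not annuli, pairs of pants, or once-punctured tori) to contain free subgroups, via the known structure of subsurface mapping class groups.

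The generation condition, that $\Gamma$ is generated by $N_1,\ldots,N_k$, is where I would invoke the explicit Dehn-twist (or half-twist) generating families cited in the proof of Proposition \ref{prop-c-f-mcg}, namely the curve systems in Figure \ref{fig-seq}. The strategy is to arrange the $N_i$ so that collectively their generators include all the standard twists generating $\pmod(S)$, while maintaining the cyclic adjacency $N_i\bowtie N_{i+1}$ dictated by disjointness of consecutive curves in the chosen sequence, exactly as conditions (1) and (2) were used for Class $\mathscr{F}$.

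The hard part will be simultaneously satisfying the free-product requirement on $M^1, M^2$ and the commutation requirements $M^j\bowtie N_2$, $N_1^j\bowtie N_2$: these pull in opposite directions, since $M^1$ and $M^2$ must interact richly (to generate a free product, so their supports cannot be disjoint and must fill a common subsurface), yet both must commute up to normality with $N_2$ (forcing their supports to avoid that of $N_2$). This is precisely why the stronger numerical hypothesis $3g+p\geq 7$ and the exclusion of $(g,p)=(2,1)$ appear, compared with $3g+p\geq 5$ for Class $\mathscr{F}$: one needs a subsurface large enough to host a filling free-product pair $M^1\ast M^2$ in the complement of the subsurface supporting $N_2$. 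I expect the genus-zero and low-genus boundary cases (e.g.\ sphere with many boundary components, and the delicate excluded case near $(2,1)$) to require separate explicit curve configurations, handled case-by-case as in Proposition \ref{prop-c-f-mcg}, and verifying the free-product condition will rely on a ping-pong or subsurface-projection argument showing that two pseudo-Anosov elements on overlapping but suitably positioned subsurfaces generate $M^1\ast M^2$.
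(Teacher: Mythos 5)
Your high-level strategy matches the paper's: reduce to $\pmod(S)$ (or $\mod(S)$ when $g=0$) via Proposition \ref{prop-per-c-a} (ii), then verify the hypotheses of Proposition \ref{prop-ex-c-a} using curve configurations drawn from the generating families of Figure \ref{fig-seq}. However, there is a genuine gap at the heart of your construction, precisely at the tension you yourself flag but do not resolve. You propose choosing two \emph{disjoint} curves $c_1, c_2$ with $I(c_1,c_2)=0$ and setting $N_1^1,N_1^2$ to be the cyclic groups generated by their Dehn twists. This cannot work: disjoint curves give commuting twists, so $N_1=\langle t_{c_1},t_{c_2}\rangle$ would be abelian (hence amenable, violating the first bullet of Proposition \ref{prop-ex-c-a}), and any subgroups $M^1<N_1^1$, $M^2<N_1^2$ would commute and so could never generate a free product. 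Your alternative suggestion of taking $M^1,M^2$ supported on disjoint subsurfaces fails for the same reason. The missing idea is the opposite arrangement: each $N_i$ should be generated by the Dehn twists about a pair of curves $\{\delta_i^1,\delta_i^2\}$ with $I(\delta_i^1,\delta_i^2)\neq 0$, while every curve of the $i$-th pair is disjoint from every curve of the $(i+1)$-st pair. The intersecting pair supplies non-amenability and the free product via the theorem of Hamidi-Tehrani (Theorem 3.2 in \cite{hamidi}) that the \emph{squares} of Dehn twists about two intersecting curves generate a free group of rank two; one then takes $N_1^j=\langle t_{\delta_1^j}\rangle$ and $M^j=\langle t_{\delta_1^j}^2\rangle$. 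The disjointness between consecutive pairs supplies all the $\bowtie$ relations by elementwise commutation. No ping-pong or subsurface-projection argument is needed beyond citing that one result.

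Beyond this, the remaining work in the paper is the explicit case-by-case exhibition of such sequences of pairs (cases $g\geq 3$; $g=2$, $p\geq 2$; $g=1$, $p\geq 4$; $g=0$, $p\geq 7$) whose union of curves contains the standard generating family, which is exactly where the hypotheses $3g+p\geq 7$ and $(g,p)\neq(2,1)$ enter --- one needs room for a pair of intersecting curves disjoint from the next pair of intersecting curves, rather than merely single curves disjoint from single curves as in the Class $\mathscr{F}$ argument. Your intuition about why the numerical bound strengthens is essentially right, but without the corrected construction of the $N_i$ the proof does not go through.
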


\begin{proof}
We first note that if $a$ and $b$ are elements of $V(S)$ with $I(a, b)\neq 0$, then the squares of the Dehn twists about $a$ and $b$ generate the non-abelian free group of rank two (see Theorem 3.2 in \cite{hamidi}).
As mentioned right after Proposition \ref{prop-c-f-mcg}, if $g\geq 1$, then the Dehn twists about curves in the family
\[F=\{ \alpha_i\}_{i=1}^g\cup \{ \alpha_{j, j+1}\}_{j=1}^{g-1}\cup \{ \beta_k\}_{k=1}^{p-1}\cup \{ \beta, \gamma \}\]
described in Figure \ref{fig-seq} (a) generate $\pmod(S)$, where $\gamma$ is excluded if $g=1$.
When $g\geq 1$, we aim to construct a sequence of pairs of curves in $S$,
\[\{ \delta_1^1, \delta_1^2\},\ \{ \delta_2^1, \delta_2^2\},\ldots,\ \{ \delta_l^1, \delta_l^2\}\]
such that
\begin{enumerate}
\item[(1)] $I(\delta_i^1, \delta_i^2)\neq 0$ for any $i=1,\ldots, l$;
\item[(2)] $I(\delta_i^j, \delta_{i+1}^k)=0$ for any $i=1,\ldots, l-1$ and any $j, k=1, 2$; and
\item[(3)] $F\subset \{\, \delta_i^j\mid i=1,\ldots, l,\ j=1, 2\,\}$.
\end{enumerate}
Once such a sequence is found, for each $i=1,\ldots, l$, we define $N_i$ as the subgroup of $\pmod(S)$ generated by the Dehn twists about $\delta_i^1$ and $\delta_i^2$.
Condition (1) implies that each $N_i$ contains a non-abelian free subgroup.
Condition (2) implies that for each $i=1,\ldots, l-1$, any element of $N_i$ and any element of $N_{i+1}$ commute.
Condition (3) implies that $N_1,\ldots, N_l$ generate $\pmod(S)$.
By Proposition \ref{prop-ex-c-a}, $\pmod(S)$ belongs to $\mathscr{A}$.
It follows from Proposition \ref{prop-per-c-a} (ii) that any finite index subgroup of $\mod^*(S)$ belongs to $\mathscr{A}$.

\medskip

\noindent {\it Case $g\geq 3$}.
We define a curve $\alpha'$ in $S$ as in Figure \ref{fig-seq} (a) and put $z=\{ \alpha_g, \alpha'\}$.
The sequence of pairs of curves in $S$,
\begin{align*}
&\{ \alpha_1, \beta_1\},\ z,\ \{ \alpha_1, \beta_2\},\ldots,\ z,\ \{ \alpha_1, \beta_i\},\ldots,\ z,\ \{ \alpha_1, \beta_{p-1}\},\ z,\ \{ \alpha_1, \beta \},\ \{ \alpha_2, \gamma \},\ z,\\
&\{ \alpha_{1, 2}, \alpha_2\},\ z,\ \{ \alpha_{2, 3}, \alpha_3\},\ldots,\ z,\ \{ \alpha_{j, j+1}, \alpha_{j+1}\},\ldots,\ z,\ \{ \alpha_{g-2, g-1}, \alpha_{g-1}\},\ z,\\
&\{ \alpha_1, \beta \},\ \{ \alpha_{g-1, g}, \alpha_g\},
\end{align*}
then satisfies conditions (1)--(3).

\medskip

\noindent {\it Case $g=2$ and $p\geq 2$}.
We put $z=\{ \alpha_2, \gamma \}$.
Let $\beta'$ be the curve described in Figure \ref{fig-seq} (a), which is essential because $p\geq 2$.
The sequence of pairs of curves in $S$,
\begin{align*}
&\{ \alpha_1, \beta_1\},\ z,\ \{ \alpha_1, \beta_2\},\ldots,\ z,\ \{ \alpha_1, \beta_i\},\ldots,\ z,\ \{ \alpha_1, \beta_{p-1}\},\ z,\ \{ \alpha_1, \beta \},\ z,\ \{ \beta', \beta_1\},\\
&\{ \alpha_{1, 2}, \alpha_2\},
\end{align*}
then satisfies conditions (1)--(3).

\medskip

\noindent {\it Case $g=1$ and $p\geq 4$}.
We put $\beta_p=\beta$.
For each $i=1,\ldots, p-1$, choose two curves $\gamma_i^1, \gamma_i^2\in V(S)$ with
\[I(\gamma_i^1, \gamma_i^2)\neq 0,\quad I(\gamma_i^j, \alpha_1)=I(\gamma_i^j, \beta_i)=I(\gamma_i^j, \beta_{i+1})=0\]
for any $j=1, 2$.
They exist because $p\geq 4$.
The sequence of pairs of curves in $S$,
\begin{align*}
&\{ \alpha_1, \beta_1\},\ \{ \gamma_1^1, \gamma_1^2\},\ \{ \alpha_1, \beta_2\},\ \{ \gamma_2^1, \gamma_2^2\},\ldots,\ \{ \alpha_1, \beta_i\},\ \{ \gamma_i^1, \gamma_i^2\},\\
&\ldots,\ \{ \alpha_1, \beta_{p-1}\},\ \{ \gamma_{p-1}^1, \gamma_{p-1}^2\},\ \{ \alpha_1, \beta_p\},
\end{align*}
then satisfies conditions (1)--(3).

\medskip

\noindent {\it Case $g=0$ and $p\geq 7$}.
Label boundary components of $S$ as $\partial_1,\ldots, \partial_p$, where indices are regarded as elements of $\mathbb{Z}/p\mathbb{Z}$.
For each $i\in \mathbb{Z}/p\mathbb{Z}$, we define a curve $\alpha_i$ as described in Figure \ref{fig-seq} (b).
We put $F=\{\, \alpha_i \mid i\in \mathbb{Z}/p\mathbb{Z}\,\}$.
As mentioned right after Proposition \ref{prop-c-f-mcg}, the half twists about curves in $F$ generate $\mod(S)$.
The sequence of pairs of curves in $S$,
\begin{align*}
&\{ \alpha_1, \alpha_2\},\ \{ \alpha_5, \alpha_6\}, \{ \alpha_2, \alpha_3\},\ \{ \alpha_6, \alpha_7\},\ldots,\ \{ \alpha_i, \alpha_{i+1}\},\ \{ \alpha_{i+3}, \alpha_{i+4}\},\\
&\ldots,\ \{ \alpha_{p-1}, \alpha_p \},\ \{ \alpha_{p+2}, \alpha_{p+3}\},
\end{align*}
then satisfies conditions (1)--(3).
A verbatim argument shows that any finite index subgroup of $\mod^*(S)$ belongs to $\mathscr{A}$.
\end{proof}

One can also prove that if either $3g+p<7$ or $(g, p)=(2, 1)$, then for any finite index subgroup $\Gamma$ of $\mod^*(S)$, there exists no collection of at least two subgroups of $\Gamma$, $N_1, N_2,\ldots, N_k$, satisfying all the assumptions in Proposition \ref{prop-ex-c-a}.
We do not present a proof of this fact because it will not be used in the sequel.


\section{Coupling rigidity}\label{sec-coup-rigid}

We begin with the definition of measure equivalence (see \cite{furman-mer} for basics).

\begin{defn}[\ci{0.5.E}{gromov-as-inv}]\label{defn-me}
Two discrete groups $\Gamma$ and $\Lambda$ are said to be {\it measure equivalent (ME)} if one has a standard Borel space $(\Sigma, m)$ with a $\sigma$-finite positive measure and a measure-preserving action of $\Gamma \times \Lambda$ on $(\Sigma, m)$ such that there exist Borel subsets $X, Y\subset \Sigma$ of finite measure satisfying the equality
\[\Sigma =\bigsqcup_{\gamma \in \Gamma}\gamma Y=\bigsqcup_{\lambda \in \Lambda}\lambda X\]
up to $m$-null sets. 
In this case, the space $(\Sigma, m)$ equipped with the action of $\Gamma \times \Lambda$ is called a {\it coupling} of $\Gamma$ and $\Lambda$. 
When $\Gamma$ and $\Lambda$ are ME, we write $\Gamma \sim_{\rm ME}\Lambda$.
\end{defn}

Let $\Gamma$ be a discrete group.
We mean by a {\it self-coupling} of $\Gamma$ a coupling of $\Gamma$ with itself. 
Given a homomorphism from $\Gamma$ into a standard Borel group $G$, we define a rigidity property of self-couplings of $\Gamma$, called coupling rigidity. 
Once this property is obtained, for any discrete group $\Lambda$ with $\Gamma \sim_{\rm ME}\Lambda$, one gets a homomorphism from $\Lambda$ into $G$ thanks to a theorem due to Furman \cite{furman-mer} (see Theorem 3.5 in \cite{kida-ama} for a precise statement). 
This leads to understanding of structure of $\Lambda$.

\begin{defn}\label{defn-coup-rigid}
Let $\Gamma$ be a discrete group, $G$ a standard Borel group and $\pi \colon \Gamma \rightarrow G$ a homomorphism.
We say that $\Gamma$ is {\it coupling rigid} with respect to the pair $(G, \pi)$ if
\begin{itemize}
\item for any self-coupling $\Sigma$ of $\Gamma$, there exists an almost $(\Gamma \times \Gamma)$-equivariant Borel map $\Phi \colon \Sigma \rightarrow G$, where the action of $\Gamma \times \Gamma$ on $G$ is defined by the formula $(\gamma_1, \gamma_2)g=\gamma_1g\gamma_2^{-1}$ for any $\gamma_1, \gamma_2\in \Gamma$ and $g\in G$; and
\item the Dirac measure on the neutral element of $G$ is the only probability measure on $G$ that is invariant under conjugation by any element of $\pi(\Gamma)$.
\end{itemize}
When $\pi$ is understood from the context, $\Gamma$ is simply said to be coupling rigid with respect to $G$. 
\end{defn}

It is known that if $S$ is a surface of genus $g$ with $p$ boundary components satisfying $3g+p\geq 5$ and $(g, p)\neq (1, 2), (2, 0)$, then any finite index subgroup $\Gamma$ of $\mod^*(S)$ is coupling rigid with respect to the pair $(\mod^*(S), \imath)$, where $\imath$ is the inclusion of $\Gamma$ into $\mod^*(S)$ (see Corollary 5.9 of \cite{kida-mer}).

As a consequence of argument so far, we obtain the following coupling rigidity of amalgamated free products of discrete groups in Classes $\mathscr{F}$ and $\mathscr{A}$.

\begin{thm}\label{thm-coup-rigid}
Let $\Gamma =\Gamma_1\ast_A\Gamma_2$ be an amalgamated free product of discrete groups such that $A$ is infinite and proper in $\Gamma_i$ for each $i=1, 2$; and either of the following conditions (a) or (b) holds:
\begin{enumerate}
\item[(a)] For each $i=1, 2$, we have $\Gamma_i\in \mathscr{F}$, and the group $A\cap \gamma A\gamma^{-1}$ is finite for each $\gamma \in \Gamma_i\setminus A$.
\item[(b)] For each $i=1, 2$, we have $\Gamma_i\in \mathscr{A}$, the group $A\cap \gamma A\gamma^{-1}$ is amenable for each $\gamma \in \Gamma_i\setminus A$, and the equality ${\rm LQN}_{\Gamma_i}(A)=A$ holds.
\end{enumerate} 
Then $\Gamma$ is coupling rigid with respect to the pair $(\aut^*(T), \imath)$, where $T$ is the Bass-Serre tree associated with the decomposition $\Gamma =\Gamma_1\ast_A\Gamma_2$ and $\imath \colon \Gamma \rightarrow \aut^*(T)$ is the homomorphism arising from the action of $\Gamma$ on $T$.
\end{thm}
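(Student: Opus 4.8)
The plan is to verify the two requirements in Definition \ref{defn-coup-rigid}. The second requirement, that the Dirac measure at the identity is the only conjugation-invariant probability measure on $\aut^*(T)$ under $\imath(\Gamma)$, is a purely dynamical statement about $\Gamma \c T$: since $A$ is infinite and proper in each $\Gamma_i$, this action is non-elementary (it carries two hyperbolic elements with no common endpoint and no invariant end or line), and the uniqueness follows exactly as in \cite{kida-ama} by pushing mass to infinity via the North-South dynamics of hyperbolic elements. So I would concentrate on producing the almost $(\Gamma\times\Gamma)$-equivariant Borel map $\Phi\colon\Sigma\to\aut^*(T)$. Given a self-coupling $\Sigma$, I would choose a fundamental domain $X$ for the action of $\{e\}\times\Gamma$, obtaining a measure-preserving action $\Gamma\c(X,\mu)$ and the associated ME cocycle $\alpha\colon\Gamma\times X\to\Gamma$ in the sense of \cite{furman-mer}; set $\mathcal{G}=\Gamma\ltimes X$ and $\rho=\imath\circ\alpha\colon\Gamma\times X\to\aut^*(T)$. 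An equivariant $\Phi$ on $\Sigma$ is equivalent to a Borel map $\phi\colon X\to\aut^*(T)$ with $\phi(\gamma x)=\imath(\gamma)\phi(x)\rho(\gamma,x)^{-1}$ for $\gamma\in\Gamma$.

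The core step is to apply the hypothesis $\Gamma_i\in\mathscr{F}$ (case (a)) or $\Gamma_i\in\mathscr{A}$ (case (b)) to the restriction of $\rho$ to $\Gamma_i\ltimes X$, which requires checking conditions (I)/(Ia), (II) and (III) of Definition \ref{defn-c-f} or \ref{defn-c-a}. Condition (III) is immediate since $\rho$ takes values in $\imath(\Gamma)$, which acts without inversion. For (I)/(Ia) I would use that $\imath$ is injective, so the relevant subgroupoid consists of pairs $(\gamma,x)$ with $\alpha(\gamma,x)$ in the pointwise stabilizer in $\Gamma$ of the three given vertices; since three distinct vertices span a tripod containing two adjacent edges, this stabilizer is contained in an intersection $A\cap\gamma A\gamma^{-1}$ with $\gamma\in\Gamma_i\setminus A$, hence finite in case (a) and amenable in case (b), so the subgroupoid (a cocycle pullback into a fixed finite, resp.\ amenable, subgroup) is finite, resp.\ amenable. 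For (II) I would invoke Corollary \ref{cor-ame-action}: the Bass-Serre tree satisfies the hypotheses of Theorem \ref{thm-ame-action}, namely $\partial T\neq\emptyset$, exactly two vertex orbits, and finite/amenable stabilizer of any two vertices at distance $\geq 2$, so $\Gamma\c\partial_2T$ is amenable in the measure-theoretic sense; feeding this through the cocycle $\alpha$ as in Proposition \ref{prop-ame-subgrd} shows that every subgroupoid admitting a $\rho$-equivariant map into $\partial_2T$ is amenable. Membership in $\mathscr{F}$ or $\mathscr{A}$ then yields, for each $i=1,2$, a Borel map $\varphi_i\colon X\to V(T)$ with $\alpha(\gamma,x)\varphi_i(x)=\varphi_i(\gamma x)$ for $\gamma\in\Gamma_i$.

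With $\varphi_1,\varphi_2$ in hand I would glue them into $\phi$. Restricting both to $A=\Gamma_1\cap\Gamma_2$ makes each $\varphi_i$ an $A$-equivariant map, and here the hypothesis on $A$ enters decisively: almost malnormality (case (a)) or the conditions ${\rm LQN}_{\Gamma_i}(A)=A$ together with amenability of $A\cap\gamma A\gamma^{-1}$ (case (b)) force the $A$-invariant configuration to be exactly the base edge, so that $\varphi_1(x)$ and $\varphi_2(x)$ are the two endpoints of a single edge for a.e.\ $x$ and cannot drift along $T$. Because $\Gamma$ is generated by $\Gamma_1$ and $\Gamma_2$, this coherent edge datum can be propagated across the amalgam relation to build a single Borel map $\phi\colon X\to\aut^*(T)$ with the required twisted equivariance, and hence $\Phi$ on $\Sigma$; this assembly is carried out as in \cite{kida-ama}, the novelty being that $\varphi_1,\varphi_2$ now come from Classes $\mathscr{F}$ and $\mathscr{A}$ rather than from property (T).

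I expect the main obstacle to be this last gluing step, specifically verifying that $\varphi_1$ and $\varphi_2$ determine a common edge and that the local equivariances are consistent around the amalgam. The almost malnormality and left-quasi-normalizer hypotheses are precisely what preclude drift, and translating them into the groupoid setting, where one argues with normal subgroupoids and maximal invariant maps as in Lemmas \ref{lem-inv-s} and \ref{lem-inv-spe}, is the delicate part. A secondary technical point is condition (II): reconciling the genuine amenable action $\Gamma\c\partial_2T$ with the cocycle $\rho$ is where the boundary amenability of Section \ref{sec-b-ame} does the work that property (T) did in \cite{kida-ama}.
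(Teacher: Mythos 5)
Your proposal is correct and follows essentially the same route as the paper: fix a fundamental domain and the ME cocycle $\alpha$, verify that $\imath\circ\alpha$ satisfies condition (III) trivially, condition (I)/(Ia) by reducing the pointwise stabilizer of three vertices to the stabilizer of two adjacent edges (a conjugate of $A\cap\gamma A\gamma^{-1}$ with $\gamma\in(\Gamma_1\setminus A)\cup(\Gamma_2\setminus A)$), and condition (II) via Corollary \ref{cor-ame-action} and Proposition \ref{prop-ame-subgrd}, then invoke membership in $\mathscr{F}$ (resp.\ $\mathscr{A}$) to produce the invariant vertex maps for the vertex-stabilizer subgroupoids. The paper likewise treats only this step as new and defers the remaining assembly (your ``gluing'') verbatim to the proof of Theorem 4.4 in \cite{kida-ama}, exactly as you do.
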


Let us recall the Bass-Serre tree $T$ associated with an amalgamated free product $\Gamma =\Gamma_1\ast_A\Gamma_2$. 
We refer to \cite{serre} for details. 
The sets of vertices and edges of $T$ are defined to be
\[V(T)=\Gamma /\Gamma_1\sqcup \Gamma /\Gamma_2,\quad E(T)=\Gamma /A,\]
respectively. 
For each $\gamma \in \Gamma$, the edge corresponding to the coset $\gamma A$ joins the two vertices corresponding to the cosets $\gamma \Gamma_1$ and $\gamma \Gamma_2$. 
The group $\Gamma$ then acts on $T$ as simplicial automorphisms by left multiplication.

\begin{proof}[Proof of Theorem \ref{thm-coup-rigid}]
Most of the proof is a verbatim translation of that of Theorem 4.4 in \cite{kida-ama}.
Let $\Sigma$ be a self-coupling of $\Gamma$. 
To distinguish the two actions of $\Gamma$ on $\Sigma$, we put $L(\Gamma)=\Gamma \times \{ e\}$ and $R(\Gamma)=\{ e\} \times \Gamma$.
Let $X\subset \Sigma$ be a fundamental domain for the action of $R(\Gamma)$ on $\Sigma$. 
We then have the natural action of $L(\Gamma)$ on $X$ and the ME cocycle $\alpha \colon \Gamma \times X\rightarrow \Gamma$ defined by the condition $(\gamma, \alpha(\gamma, x))x\in X$ for each $\gamma \in \Gamma$ and a.e.\ $x\in X$. 
We denote by $\cal{G}$ the groupoid associated with the action $L(\Gamma)\c X$. 
For each $v\in V(T)$, we define a subgroupoid $\cal{G}_v$ of $\cal{G}$ by setting $\cal{G}_v=\{\, (\gamma, x)\in \cal{G}\mid \gamma v=v\,\}$.

Pick $v\in V(T)$. 
The first step for the proof of Theorem 4.4 in \cite{kida-ama} is to find a $(\cal{G}_v, \alpha)$-invariant Borel map $\varphi_v\colon X\rightarrow V(T)$. 
In that proof, we found it by using property (T) of $\cal{G}_v$. 
In the present setting, the stabilizer in $\Gamma$ of any two distinct edges of $T$ is contained in a conjugate of $A\cap \gamma A\gamma^{-1}$ for some $\gamma \in (\Gamma_1\setminus A)\cup (\Gamma_2\setminus A)$, which is finite (resp.\ amenable) if we assume condition (a) (resp.\ (b)). 
The composition of the cocycle $\alpha \colon \Gamma \times X\rightarrow \Gamma$ and the homomorphism $\imath \colon \Gamma \rightarrow \aut^*(T)$ thus satisfies condition (I) in Definition \ref{defn-c-f} (resp.\ condition (Ia) in Definition \ref{defn-c-a}). 
It satisfies condition (II) by Proposition \ref{prop-ame-subgrd} and Corollary \ref{cor-ame-action} and satisfies condition (III) because $\Gamma$ acts on $T$ without inversions. 
Since $\Gamma_v$ belongs to Class $\mathscr{F}$ (resp.\ $\mathscr{A}$), there exists a $(\cal{G}_v, \alpha)$-invariant Borel map from $X$ into $V(T)$.

The rest of the proof is obtained by following the same argument as in the proof of the cited theorem.
\end{proof}

\begin{rem}
Let $\Lambda$ be a discrete group ME to the group $\Gamma$ in Theorem \ref{thm-coup-rigid}. 
As a consequence of that theorem, by the argument in Section 5.1 of \cite{kida-ama}, we can find a subgroup $\Lambda_+$ of $\Lambda$ with its index at most two and a simplicial action of $\Lambda_+$ on $T$ without inversions.
Moreover, for each simplex $s$ of $T$, the stabilizer of $s$ in $\Gamma$ and that of $s$ in $\Lambda_+$ are ME by Lemma 5.2 in \cite{kida-ama}. 
We refer to Sections 5.2 and 5.4 in \cite{kida-ama} for results on the quotient graph for the action of $\Lambda_+$ on $T$.
\end{rem}

The following corollary is immediately obtained by combining Theorem \ref{thm-coup-rigid} with Corollary 6.5 of \cite{kida-ama}. 
Let us say that a group $G$ is {\it ICC} if any conjugacy class in $G$ other than $\{ e\}$ consists of infinitely many elements.

\begin{cor}\label{cor-oer}
Let $\Gamma =\Gamma_1\ast_A\Gamma_2$ be the amalgamated free product in Theorem \ref{thm-coup-rigid}. 
We now assume that for each $i=1, 2$, there exist a discrete group $G_i$ and an injective homomorphism $\pi_i\colon \Gamma_i\rightarrow G_i$ such that $\Gamma_i$ is coupling rigid with respect to the pair $(G_i, \pi_i)$.

Let $\Lambda$ be a discrete group and suppose that two ergodic f.f.m.p.\ actions $\Gamma \c (X, \mu)$ and $\Lambda \c (Y, \nu)$ are WOE. 
We furthermore suppose the following two conditions:
\begin{itemize}
\item If we denote by $E\subset X$ and $F\subset Y$ the domain and range of the Borel isomorphism, respectively, that gives the WOE between the actions $\Gamma \c (X, \mu)$ and $\Lambda \c (Y, \nu)$, then we have $\mu(E)/\mu(X)\leq \nu(F)/\nu(Y)$; and
\item Either the action $A\c (X, \mu)$ is aperiodic or both of the actions $\Gamma_1\c (X, \mu)$ and $\Gamma_2\c (X, \mu)$ are aperiodic, the action $A\c (X, \mu)$ is ergodic, and $A$ is ICC.
\end{itemize}
Then we have $\mu(E)/\mu(X)=\nu(F)/\nu(Y)$, and the cocycle $\alpha \colon \Gamma \times X\rightarrow \Lambda$ associated with the WOE is cohomologous to the cocycle arising from an isomorphism from $\Gamma$ onto $\Lambda$. 
In particular, the two actions $\Gamma \c (X, \mu)$ and $\Lambda \c (Y, \nu)$ are conjugate.
\end{cor}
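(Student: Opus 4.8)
The plan is to derive the corollary from the gluing result \ci{Corollary 6.5}{kida-ama}. The mechanism behind that result is as follows: coupling rigidity of $\Gamma$ with respect to $\aut^*(T)$, combined with Furman's theorem (cf.\ the discussion before Definition \ref{defn-coup-rigid}), produces a simplicial action of a finite-index subgroup of $\Lambda$ on the Bass-Serre tree $T$; the coupling rigidity of the two vertex groups then identifies the vertex stabilizers on the $\Lambda$-side with those on the $\Gamma$-side and assembles these identifications into an isomorphism $\Gamma \cong \Lambda$ realizing the cocycle. My task is thus to check that each hypothesis of \ci{Corollary 6.5}{kida-ama} is supplied either by Theorem \ref{thm-coup-rigid} or by the standing assumptions here, and then to read off the conclusion.

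First I would invoke Theorem \ref{thm-coup-rigid}. Since $\Gamma = \Gamma_1 \ast_A \Gamma_2$ is by hypothesis the amalgamated free product treated there --- so that condition (a) or (b) of that theorem holds --- it follows that $\Gamma$ is coupling rigid with respect to $(\aut^*(T), \imath)$, where $\imath$ is the homomorphism arising from the action of $\Gamma$ on $T$. In the language of Definition \ref{defn-coup-rigid}, this gives for every self-coupling of $\Gamma$ an almost $(\Gamma \times \Gamma)$-equivariant Borel map into $\aut^*(T)$, together with the uniqueness of the Dirac mass at the identity among $\imath(\Gamma)$-conjugation-invariant probability measures. The complementary input is assumed outright: for each $i = 1, 2$ the group $\Gamma_i$ is coupling rigid with respect to $(G_i, \pi_i)$ with $\pi_i$ injective, and injectivity is exactly what allows the tree-level data to be promoted to a genuine isomorphism on each vertex stabilizer.

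With the two rigidity inputs in place, the remaining hypotheses of \ci{Corollary 6.5}{kida-ama} --- the WOE between $\Gamma \c (X, \mu)$ and $\Lambda \c (Y, \nu)$, the index inequality $\mu(E)/\mu(X) \leq \nu(F)/\nu(Y)$, and the dichotomy that either $A \c (X, \mu)$ is aperiodic or the stated ergodicity and ICC conditions hold --- coincide with what is assumed in the present corollary. Applying that result yields the equality $\mu(E)/\mu(X) = \nu(F)/\nu(Y)$ and shows that $\alpha$ is cohomologous to a cocycle induced by an isomorphism of $\Gamma$ onto $\Lambda$; the concluding conjugacy assertion is then the standard conversion of this cohomology into a conjugacy of the two actions, the equality of index ratios guaranteeing that the weak orbit equivalence is in fact a full orbit equivalence. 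The only point needing care --- hence the main, and rather minor, obstacle --- is confirming that the precise form of coupling rigidity demanded as input by \ci{Corollary 6.5}{kida-ama} (the standard Borel group being $\aut^*(T)$, together with its conjugation-invariant-measure condition) agrees with what Theorem \ref{thm-coup-rigid} produces; since it does, the proof is an assembly of results already in hand rather than a new argument.
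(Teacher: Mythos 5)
Your proposal is correct and follows exactly the paper's own route: the paper states that the corollary "is immediately obtained by combining Theorem \ref{thm-coup-rigid} with Corollary 6.5 of \cite{kida-ama}," which is precisely the assembly you carry out, with your additional commentary on the mechanism of the cited result being consistent with the paper's intent.
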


Theorem \ref{thm-oer} then follows from Propositions \ref{prop-c-f-mcg} and \ref{prop-c-a-mcg} and the coupling rigidity of mapping class groups proved in Corollary 5.9 of \cite{kida-mer}.


\section{Subgroups of mapping class groups}\label{sec-mcg}

In this section, we present three kinds of subgroups $A$ of the mapping class group $\Delta$ such that the group $\Delta \ast_A\Delta$ satisfies the assumption in Theorem \ref{thm-coup-rigid}. 
In Section \ref{subsec-pa} (resp.\ Sections \ref{subsec-pants} and \ref{subsec-disk}), we present a subgroup $A$ for which $\Delta \ast_A\Delta$ satisfies condition (a) (resp.\ (b)) in that theorem. 
In Section \ref{subsec-rem}, we make a brief comment on difficulty of finding a subgroup $A$ with $\Delta \ast_A\Delta$ ME rigid. 
Throughout this section, we denote by $S$ a surface of genus $g$ with $p$ boundary components unless otherwise stated.

\subsection{Stabilizers of pseudo-Anosov foliations}\label{subsec-pa}

In this subsection, we assume $3g+p\geq 5$. Let $\cal{PMF}=\cal{PMF}(S)$ denote the Thurston boundary for $S$. 
This is an ideal boundary of the Teichm\"uller space for $S$, and the group $\mod^*(S)$ acts on it continuously. 
The quotient of $\mod^*(S)$ by its center, which is of order at most two, acts on $\cal{PMF}$ faithfully. 
We recommend the reader consult \cite{iva-subgr} and \cite{iva-mcg} for fundamental facts on $\cal{PMF}$.

Pick a pseudo-Anosov element $f\in \mod(S)$. 
It is known that there exist exactly two points $F_{\pm}$ of $\cal{PMF}$ fixed by $f$. 
Let us call the set $\{ F_{\pm}\}$ the {\it pair of pseudo-Anosov foliations} for $f$. 
The following two facts are also known:
\begin{itemize}
\item The stabilizer of the pair $\{ F_{\pm}\}$ in $\mod^*(S)$ contains the cyclic subgroup generated by $f$ as a subgroup of finite index (see Lemma 5.10 in \cite{iva-subgr}).
\item After exchanging $F_+$ and $F_-$ if necessary, for any compact subset $K$ of $\cal{PMF}\setminus \{ F_-\}$ and any open neighborhood $U$ of $F_+$ in $\cal{PMF}$, we have $f^n(K)\subset U$ for any sufficiently large $n$ (see Theorem 3.5 in \cite{iva-subgr}).
\end{itemize}
The second fact implies that for any non-zero integer $n$, the set of fixed points in $\cal{PMF}$ for $f^n$ is equal to $\{ F_{\pm}\}$. 
One then obtains the following:

\begin{lem}\label{lem-pa}
Let $\Gamma$ be a finite index subgroup of $\mod^*(S)$. 
Pick a pseudo-Anosov element $f\in \mod(S)$ and define $A$ to be the stabilizer of the pair of pseudo-Anosov foliations for $f$ in $\Gamma$. 
Then $A$ is almost malnormal in $\Gamma$.
\end{lem}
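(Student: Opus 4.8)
\emph{Approach.} I would begin by writing $B=\stab_{\mod^*(S)}(\{F_\pm\})$, so that $A=\Gamma\cap B$, and recording the structural facts on which everything rests. Since $\langle f\rangle$ sits in $B$ with finite index and $\Gamma$ has finite index in $\mod^*(S)$, the group $A$ is infinite and virtually infinite cyclic: concretely $C:=\langle f\rangle\cap\Gamma$ is infinite cyclic (as $f$ has infinite order and $\Gamma$ has finite index), it is contained in $A$ because every power of $f$ fixes $\{F_\pm\}$, and $[A:C]<\infty$ since $[A:\langle f\rangle\cap A]\le[B:\langle f\rangle]$. The next observation I would isolate is that for $g\in\Gamma$ one has $gAg^{-1}=\Gamma\cap\stab_{\mod^*(S)}(g\{F_\pm\})$, using $g\Gamma g^{-1}=\Gamma$ together with the fact that conjugation carries the stabilizer of $\{F_\pm\}$ to that of $g\{F_\pm\}$. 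For $g\in\Gamma\setminus A$ the element $g$ does not fix $\{F_\pm\}$ as a set, i.e.\ $g\{F_\pm\}\neq\{F_\pm\}$. Since $gAg^{-1}\cap A$ is a subgroup of the virtually infinite cyclic group $A$, it will be enough to prove that it contains no element of infinite order.

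\emph{Main step.} The heart of the argument would be to rule out infinite-order elements in the intersection. Suppose $h\in gAg^{-1}\cap A$ has infinite order. From $h\in A\subset B$ and the finite index of $\langle f\rangle$ in $B$, some positive power of $h$ lies in $\langle f\rangle$; from $h\in gAg^{-1}\subset gBg^{-1}$ and the finite index of $g\langle f\rangle g^{-1}$ in $gBg^{-1}$, some positive power of $h$ lies in $g\langle f\rangle g^{-1}$. Passing to a common positive power $L$, I would obtain integers $a,b$ with $h^{L}=f^{a}=gf^{b}g^{-1}$, where $a\neq0$ and $b\neq0$ because $h^{L}\neq e$. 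Now I would compare fixed-point sets in $\cal{PMF}$: the quoted fact gives $\mathrm{Fix}_{\cal{PMF}}(f^{a})=\{F_\pm\}$, while $\mathrm{Fix}_{\cal{PMF}}(gf^{b}g^{-1})=g\,\mathrm{Fix}_{\cal{PMF}}(f^{b})=g\{F_\pm\}$ since $\Gamma$ acts by homeomorphisms. As $f^{a}$ and $gf^{b}g^{-1}$ are equal in $\mod^*(S)$, they act identically on $\cal{PMF}$, so $\{F_\pm\}=g\{F_\pm\}$, contradicting $g\notin A$.

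\emph{Conclusion and expected obstacle.} Having excluded infinite-order elements, I would finish by a short group-theoretic remark: writing $H=gAg^{-1}\cap A$, the subgroup $H\cap C$ is a torsion subgroup of $C\cong\mathbb{Z}$, hence trivial, and $[H:H\cap C]\le[A:C]<\infty$ forces $H$ to be finite, so $A$ is almost malnormal in $\Gamma$. The step I expect to require the most care — and the only place where the dynamics of $f$ genuinely enter — is the reduction to the equation $h^{L}=f^{a}=gf^{b}g^{-1}$ with \emph{nonzero} exponents and the evaluation $\mathrm{Fix}_{\cal{PMF}}(f^{a})=\{F_\pm\}$; it is precisely the quoted statement that every nontrivial power of $f$ fixes exactly $\{F_\pm\}$ that makes the comparison of fixed-point sets decisive. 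I would note that the fact that $\mod^*(S)$ acts on $\cal{PMF}$ only through its quotient by the central subgroup of order at most two causes no difficulty, since the two elements being compared are literally equal in $\mod^*(S)$ and therefore induce the same map on $\cal{PMF}$.
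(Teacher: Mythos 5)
Your proposal is correct and takes essentially the same route as the paper: the paper argues that if $\gamma A\gamma^{-1}\cap A$ is infinite then, since $\langle f\rangle$ has finite index in the stabilizer of $\{F_{\pm}\}$, some $f^{n}$ with $n\neq 0$ lies in the intersection, and since this $f^{n}$ fixes $\{\gamma F_{\pm}\}$ while its fixed-point set in $\cal{PMF}$ is exactly $\{F_{\pm}\}$, one gets $\{\gamma F_{\pm}\}=\{F_{\pm}\}$ and $\gamma\in A$. Your detour through an infinite-order element $h$ and the equation $h^{L}=f^{a}=gf^{b}g^{-1}$ is a slightly longer but equivalent way of extracting the same nonzero power of $f$, and the rest of your argument is sound.
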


\begin{proof}
Pick $\gamma \in \Gamma$ with $\gamma A\gamma^{-1}\cap A$ infinite. 
The first fact mentioned above implies that for some non-zero integer $n$, we have $f^n\in \gamma A\gamma^{-1}\cap A$. 
Since $\gamma A\gamma^{-1}$ fixes the pair $\{ \gamma F_{\pm}\}$, so does $f^n$. 
This implies $\{ \gamma F_{\pm}\} =\{ F_{\pm}\}$ and thus $\gamma \in A$.
\end{proof}

\subsection{Stabilizers of pants decompositions}\label{subsec-pants}

We assume $3g+p\geq 5$. 
Recall that $V(S)$ denotes the set of isotopy classes of essential simple closed curves in $S$. 
We define $\Sigma(S)$ to be the set of all non-empty finite subsets $\sigma$ of $V(S)$ with $I(\alpha, \beta)=0$ for any $\alpha, \beta \in \sigma$. 
For each $\sigma \in \Sigma(S)$, we denote by $S_{\sigma}$ the surface obtained by cutting $S$ along mutually disjoint representatives of elements of $\sigma$.
This ambiguity will be of no importance in the sequel. 
We put
\[\cal{P}(S)=\{\, \sigma \in \Sigma(S)\mid \left|\sigma \right|=3g+p-3\,\}\]
and call an element of $\cal{P}(S)$ a {\it pants decomposition} of $S$. 
We note that for each $\sigma \in \Sigma(S)$, we have $\left|\sigma \right|=3g+p-3$ if and only if each component of $S_{\sigma}$ is homeomorphic to a pair of pants, i.e., a surface of genus zero with three boundary components.

For each $\sigma \in \Sigma(S)$, we set
\[\stab(\sigma)=\{\, \gamma \in \mod^*(S)\mid \gamma \sigma =\sigma\,\}.\]
Assertion (i) in the following lemma is a consequence of Corollary 4.1.B in \cite{iva-mcg}. 
Assertion (ii) is proved in Proposition 5.1 of \cite{paris}.

\begin{lem}
The following assertions hold:
\begin{enumerate}
\item If $\sigma \in \cal{P}(S)$, then the group generated by all Dehn twists about curves in $\sigma$ is of finite index in $\stab(\sigma)$. In particular, $\stab(\sigma)$ is amenable.
\item If $\sigma, \tau \in \Sigma(S)$ satisfy $\tau \not\subset \sigma$, then the orbit for the action $\stab(\sigma)\c \Sigma(S)$ containing $\tau$ consists of infinitely many elements.
\end{enumerate}
\end{lem}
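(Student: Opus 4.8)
The plan is to prove the two assertions independently, deducing (i) from the structure of stabilizers of curve systems and proving (ii) by exhibiting a single Dehn twist whose powers spread $\tau$ over infinitely many simplices.

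For assertion (i), I would begin from Ivanov's description of $\stab(\sigma)$ (Corollary 4.1.B in \cite{iva-mcg}). Cutting $S$ along disjoint representatives of the curves in $\sigma$ yields the surface $S_{\sigma}$, and because $\sigma \in \cal{P}(S)$ is a pants decomposition, \emph{every} component of $S_{\sigma}$ is a pair of pants. The key observation is that the mapping class group of a pair of pants is finite once Dehn twists about its boundary curves are accounted for, and that these boundary twists are exactly the twists about the curves in $\sigma$. Consequently, the subgroup of $\stab(\sigma)$ that fixes each curve of $\sigma$ and preserves each complementary component, modulo the finite contribution of the complementary pants, is the free abelian group $\langle t_{\alpha} : \alpha \in \sigma \rangle \cong \mathbb{Z}^{|\sigma|}$ generated by the Dehn twists $t_{\alpha}$. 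Since permuting the finitely many curves of $\sigma$ and permuting and reflecting the finitely many complementary pieces contributes only a finite further ambiguity, this free abelian group has finite index in $\stab(\sigma)$. A finitely generated free abelian group is amenable, and amenability passes to finite-index overgroups, so $\stab(\sigma)$ is amenable.

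For assertion (ii), the plan is to find a curve $c$ with $I(c, \beta) \neq 0$ and $t_{c} \in \stab(\sigma)$, where $\beta \in \tau \setminus \sigma$ is any curve witnessing $\tau \not\subset \sigma$. If $\beta$ meets some $\alpha \in \sigma$, I would take $c = \alpha$: the twist $t_{\alpha}$ fixes every curve of $\sigma$ (they are pairwise disjoint), so $t_{\alpha} \in \stab(\sigma)$. Otherwise $\beta$ is disjoint from $\sigma$, and since $\beta$ is distinct from every element of $\sigma$, it descends to an essential non-peripheral curve in some component $R$ of $S_{\sigma}$; here I would choose a curve $c \subset R$ with $I(c, \beta) \neq 0$, which is automatically disjoint from $\sigma$ and hence has $t_{c} \in \stab(\sigma)$. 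In either case $I(c, \beta) \neq 0$ forces the classes $t_{c}^{n}(\beta)$, $n \in \mathbb{Z}$, to be pairwise distinct elements of $V(S)$. As $t_{c}^{n}(\beta) \in t_{c}^{n}(\tau)$ and each $t_{c}^{n}(\tau)$ is a finite set of fixed cardinality $|\tau|$, the union $\bigcup_{n} t_{c}^{n}(\tau)$ is infinite, so infinitely many of the sets $t_{c}^{n}(\tau)$ must be distinct, and the orbit of $\tau$ under $\stab(\sigma)$ is infinite.

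The hard part will be the disjoint case of assertion (ii): I must check that a curve $\beta \in \tau \setminus \sigma$ disjoint from $\sigma$ really is essential and non-peripheral in its complementary component $R$. This follows because a boundary curve of $R$ coming from $\partial S$ would make $\beta$ peripheral in $S$, and one coming from $\sigma$ would force $\beta \in \sigma$, both excluded. Thus $R$ is neither an annulus nor a pair of pants, and I would then invoke the standard fact that an essential non-peripheral simple closed curve in such a surface has nonzero geometric intersection with some other simple closed curve, producing the required transverse $c$. The infinitude of $\{t_{c}^{n}(\beta)\}_{n}$ is the usual statement that twisting about a transverse curve yields infinitely many isotopy classes, which I would record but not belabor. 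This reasoning recovers Proposition~5.1 of \cite{paris}.
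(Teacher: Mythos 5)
Your proposal is correct, but it is worth noting that the paper does not actually prove this lemma: it simply cites Corollary 4.1.B of \cite{iva-mcg} for assertion (i) and Proposition 5.1 of \cite{paris} for assertion (ii). For (i) you follow the intended route, namely Ivanov's reduction homomorphism: cutting along $\sigma$ sends the side-and-curve-preserving part of $\stab(\sigma)$ to the product of the (finite) mapping class groups of the complementary pairs of pants, with kernel the free abelian twist group $\langle t_{\alpha}:\alpha \in \sigma\rangle$, and the remaining permutational ambiguity is finite; this gives finite index and hence amenability. For (ii) you replace the citation by a self-contained argument, and the argument is sound: in either case you produce $c$ with $t_c\in\stab(\sigma)$ and $I(c,\beta)\neq 0$, the inequality $I(t_c^{\,n}(\beta),\beta)\geq |n|\,I(c,\beta)^2$ makes the classes $t_c^{\,n}(\beta)$ pairwise distinct, and the finiteness of each $t_c^{\,n}(\tau)$ forces infinitely many distinct images of $\tau$. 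The one point you rightly flag --- that a curve $\beta\in\tau\setminus\sigma$ disjoint from $\sigma$ is essential and non-peripheral in its complementary component $R$, so that $R$ has enough complexity to contain a transverse curve $c$ (which is then itself essential in $S$) --- is handled correctly. The trade-off is the usual one: the paper's citation is shorter, while your direct proof of (ii) makes the mechanism (powers of a single twist in $\stab(\sigma)$ already give an infinite orbit) transparent and independent of \cite{paris}.
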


Assertion (ii) implies the equality
\[{\rm LQN}_{\mod^*(S)}(\stab(\sigma))=\stab(\sigma)\]
for each $\sigma \in \Sigma(S)$. 
Let $\Delta$ be a finite index subgroup of $\mod^*(S)$.
For each $\sigma \in \Sigma(S)$, we set $\Delta_{\sigma}=\stab(\sigma)\cap \Delta$.
It follows that for each $\sigma \in \cal{P}(S)$, the group $\Delta \ast_{\Delta_{\sigma}}\Delta$ fulfills condition (b) in Theorem \ref{thm-coup-rigid}.

\begin{rem}
For each surface $Q$, let $\cal{A}(Q)$ denote the subset of $\cal{PMF}(Q)$ consisting of all projective classes of pseudo-Anosov foliations on $Q$ associated to a pseudo-Anosov element of $\mod(Q)$.

For each $\sigma \in \Sigma(S)$, we define a subset $\cal{F}(\sigma)$ of $\cal{PMF}$ to be the set of all projective classes of measured foliations on $S$ defined as the sum
\[\sum_Qs_QF_Q+\sum_{\alpha \in \sigma}t_{\alpha}\alpha\]
with $F_Q\in \cal{A}(Q)$, $s_Q>0$ and $t_{\alpha}\geq 0$, where $Q$ runs through all components of $S_{\sigma}$ that are not homeomorphic to a pair of pants. 
The reader should consult Section 2 of \cite{iva-subgr} for measured foliations and their sums. 
For each $F\in \cal{F}(\sigma)$, we denote by $\stab(F)$ the stabilizer of $F$ in $\mod^*(S)$. 
Along the same argument as above, one can verify that for each $F\in \cal{F}(\sigma)$, the group $\stab(F)$ is amenable and we have
\[{\rm LQN}_{\mod^*(S)}(\stab(F))=\stab(F).\]
\end{rem}

\subsection{Stabilizers of Teichm\"uller disks}\label{subsec-disk}

Throughout this subsection, we assume $S$ to be a closed surface of genus at least two. 
We first summarize known facts on Teichm\"uller disks, whose details and related topics are found in \cite{earle-gardiner} and \cite{masur-tab}. 
Let $\cal{T}$ denote the Teichm\"uller space for $S$. 
Pick a point $x\in \cal{T}$ and let $X$ be a Riemann surface associated with $x$. 
For each holomorphic quadratic differential $\varphi$ on $X$, one can construct a totally geodesic subspace $D$ of $\cal{T}$ containing $x$ and isometric to the Poincar\'e disk, with respect to the Teichm\"uller metric on $\cal{T}$. 
This $D$ is called the {\it Teichm\"uller disk} determined by $\varphi$. 
We now focus on the stabilizer $\stab(D)$ of $D$ for the action of $\mod^*(S)$ on $\cal{T}$. 
We have the homomorphism
\[\eta \colon \stab(D)\rightarrow \isom(D)\simeq PGL(2, \mathbb{R}),\]
where $\isom(D)$ is the isometry group of $D$. 
Since the action of $\mod^*(S)$ on $\cal{T}$ is properly discontinuous (see Chapter 8 in \cite{gardiner}), $\ker \eta$ is finite and $\eta(\stab(D))$ is discrete in $PGL(2, \mathbb{R})$. 
The group $\eta(\stab(D)\cap \mod(S))$ is often called the {\it Veech group} for $D$. 
Let us recall fundamental properties of Teichm\"uller disks.

\begin{thm}[\ci{Theorem 6}{kra}]\label{thm-disk1}
Let $D$ be a Teichm\"uller disk in $\cal{T}$, and pick an element $f\in \stab(D)\cap \mod(S)$. Then the following assertions hold:
\begin{enumerate}
\item If $\eta(f)$ is elliptic or the identity, then $f$ is of finite order.
\item If $\eta(f)$ is parabolic, then $f$ is reducible.
\item If $\eta(f)$ is hyperbolic, then $f$ is pseudo-Anosov.
\end{enumerate}
\end{thm}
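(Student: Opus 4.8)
The plan is to read off the Thurston type of $f$ from the isometry type of $\eta(f)$, exploiting that the Teichm\"uller disk $D$ is isometrically and totally geodesically embedded in $(\mathcal{T}, d_{\mathcal{T}})$. Because of this, the restriction $f|_D$ coincides with $\eta(f)$, geodesics of $D$ are geodesics of $\mathcal{T}$, and $d_{\mathcal{T}}(x, fx)=d_D(x, \eta(f)x)$ for every $x\in D$. Since an element of $\mod(S)$ acts on $\mathcal{T}$ biholomorphically, it preserves the orientation of the disk, so $\eta(f)\in PSL(2,\mathbb{R})$ is elliptic, parabolic, hyperbolic, or the identity, matching the three cases of the statement. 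The engine of the argument is Bers' classification of mapping classes by the displacement function $a_f=\inf_{x\in \mathcal{T}}d_{\mathcal{T}}(x, fx)$: the class $f$ is periodic iff $a_f=0$ and attained, pseudo-Anosov iff $a_f>0$ and attained on its invariant axis, and reducible of infinite order iff $a_f$ is not attained. I would combine this with the elementary geometry of $\isom(\mathbb{H}^2)$.

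For case (i), if $\eta(f)$ is elliptic or the identity, it fixes a point $x_0\in D\subset \mathcal{T}$, so $f$ fixes $x_0$ and is realized by a conformal automorphism of the Riemann surface at $x_0$. As $S$ is closed of genus at least two, $\aut(X_0)$ is finite by Hurwitz's theorem, so $f$ has finite order. For case (iii), a hyperbolic $\eta(f)$ has an axis $L\subset D$ along which it translates by some $\ell>0$; by total geodesy $L$ is a bi-infinite Teichm\"uller geodesic invariant under $f$, with $d_{\mathcal{T}}(x, fx)=\ell$ for all $x\in L$. An $f$-invariant Teichm\"uller geodesic on which $f$ acts by nontrivial translation is precisely the axis of a pseudo-Anosov map: writing $L$ as the flow line of a quadratic differential $q$, the map $f$ preserves the horizontal and vertical foliations of $q$ and scales their transverse measures by $e^{\pm \ell/2}$, which is the definition of pseudo-Anosov. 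Hence $f$ is pseudo-Anosov.

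For case (ii), I would argue by exclusion, using the mutual exclusivity of the trichotomy periodic / reducible of infinite order / pseudo-Anosov. If $\eta(f)$ is parabolic, its translation length on $D$ is zero, so $\inf_{x\in D}d_{\mathcal{T}}(x, fx)=0$ and therefore $a_f=0$; by Bers this rules out $f$ pseudo-Anosov. On the other hand $f$ cannot be periodic: a finite-order $f$ preserving $D$ restricts to a finite-order isometry of $D\simeq \mathbb{H}^2$, forcing $\eta(f)$ to be elliptic or the identity, contrary to parabolicity. The only remaining possibility is that $f$ is reducible. Concretely, the parabolic fixed direction on the flat structure is completely periodic, and $f$ permutes the core curves of the resulting cylinder decomposition, exhibiting an explicit $f$-invariant system of essential curves.

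The hyperbolic-geometry bookkeeping is routine; the genuine content sits in two imported facts, namely that $D$ is isometric and totally geodesic in $(\mathcal{T}, d_{\mathcal{T}})$ and Bers' classification of mapping classes by the displacement function together with the axis characterization of pseudo-Anosov maps. The step I expect to be the main obstacle is the pseudo-Anosov direction (iii): upgrading ``$f$ preserves a Teichm\"uller geodesic and translates it'' to ``$f$ is pseudo-Anosov'' requires either Bers' precise description of the pseudo-Anosov axis or passing to the flat-structure picture and recognizing $f$ as an affine automorphism with hyperbolic derivative. Once that identification is secured, cases (i) and (ii) follow cleanly from fixed-point and displacement-length considerations and the exclusivity of the Thurston trichotomy.
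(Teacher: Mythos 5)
The paper does not prove this statement at all: it is imported verbatim from Kra's paper (the citation ``[Theorem 6, kra]'' in the theorem header), so there is no in-paper argument to compare against. Your reconstruction is sound and follows essentially the route of the original source: identify $f|_D$ with $\eta(f)$ via the isometric, totally geodesic (indeed holomorphic) embedding of $D$, and then translate the elliptic/parabolic/hyperbolic trichotomy for $\isom(\mathbb{H}^2)$ into Bers' classification by the displacement function $a_f$. Two small remarks. First, with the usual normalization $d_{\mathcal{T}}=\tfrac12\log K$, a hyperbolic $\eta(f)$ of translation length $\ell$ scales the transverse measures of the horizontal and vertical foliations by $e^{\pm\ell}$ (dilatation $\lambda=e^{\ell}$), not $e^{\pm\ell/2}$; this is only a convention slip and does not affect the argument. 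Second, in case (ii) your primary argument by exclusion ($a_f=0$ rules out pseudo-Anosov by Bers, and parabolicity of $\eta(f)$ rules out finite order since $\eta$ is a homomorphism and torsion isometries of $\mathbb{H}^2$ are elliptic or trivial) is complete on its own; the ``concrete'' alternative via complete periodicity of the parabolic direction quietly imports Veech's theorem, which is considerably heavier than what is needed, so it is better presented as an aside than as the proof.
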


\begin{thm}\label{thm-disk2}
The following assertions hold:
\begin{enumerate}
\item For any two Teichm\"uller disks $D_1$ and $D_2$ in $\cal{T}$, we have $\left|D_1\cap D_2\right|\geq 2$ if and only if $D_1=D_2$.
\item For any pseudo-Anosov element $f\in \mod(S)$, there exists exactly one Teichm\"uller disk $D$ with $f\in \stab(D)$.
\end{enumerate}
\end{thm}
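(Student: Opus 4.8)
The plan is to reduce both assertions to two structural facts about Teichm\"uller disks, extracted from \cite{earle-gardiner} and \cite{masur-tab}: first, that any two distinct points of $\cal{T}$ are joined by a unique Teichm\"uller geodesic; and second, that a Teichm\"uller disk is totally geodesic in $\cal{T}$ and is uniquely determined by any Teichm\"uller geodesic it contains. The second fact holds because the Teichm\"uller disk determined by a holomorphic quadratic differential $\varphi$ is the image in $\cal{T}$ of the orbit of the base point under the natural action of $SL(2, \mathbb{R})$ on the flat structure of $\varphi$, within which a geodesic is the orbit of the diagonal subgroup; a geodesic segment therefore recovers $\varphi$ up to scaling and hence recovers the whole disk.

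For assertion (i), the implication that $D_1=D_2$ forces $\left|D_1\cap D_2\right|\geq 2$ is immediate, since a Teichm\"uller disk is infinite. For the converse, I would take two distinct points $x, y\in D_1\cap D_2$, form the unique Teichm\"uller geodesic $\ell$ joining $x$ and $y$, and note that $\ell \subset D_1$ and $\ell \subset D_2$ because each $D_i$ is totally geodesic. By the second structural fact, a Teichm\"uller disk containing $\ell$ is unique, so $D_1=D_2$.

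For assertion (ii), existence comes from the defining quadratic differential of $f$: a pseudo-Anosov element $f$ is affine with respect to the flat structure of a quadratic differential $\varphi$ whose horizontal and vertical foliations are the invariant foliations of $f$, and on the Teichm\"uller disk $D$ determined by $\varphi$ the map $f$ acts as a hyperbolic isometry $\eta(f)$; hence $f\in \stab(D)$. For uniqueness, suppose $f\in \stab(D')$ for some Teichm\"uller disk $D'$. Since $f$ is pseudo-Anosov, Theorem \ref{thm-disk1}(iii) shows that $\eta(f)\in \isom(D')$ is hyperbolic, so it has an axis, which is a Teichm\"uller geodesic of $\cal{T}$ preserved by $f$ and contained in $D'$. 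I would then invoke uniqueness of the $f$-invariant Teichm\"uller geodesic: as recalled in Section \ref{subsec-pa}, $f^n$ has exactly the two fixed points $F_{\pm}$ on $\cal{PMF}$ for every nonzero $n$, so any $f$-invariant Teichm\"uller geodesic must have endpoint set $\{F_{\pm}\}$, and since $F_{\pm}$ are uniquely ergodic there is a single such geodesic $\ell_0$. Both $D$ and $D'$ then contain $\ell_0$, and applying assertion (i) to two points of $\ell_0$ yields $D=D'$.

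The main obstacle I expect is justifying the two uniqueness statements cleanly, namely that a Teichm\"uller geodesic lies in a unique Teichm\"uller disk (used in (i)) and that $f$ admits a unique invariant Teichm\"uller geodesic (used for uniqueness in (ii)). Both are standard in Teichm\"uller theory; the latter follows from the contraction property of $f$ acting on $\cal{PMF}$ recalled in Section \ref{subsec-pa}, which pins down its fixed points and hence its axis.
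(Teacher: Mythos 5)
Your proposal is correct and follows essentially the same route as the paper: the paper disposes of assertion (i) by citing Lemma 3.1 of \cite{marden-masur} (whose underlying argument is exactly your "two points determine a unique geodesic, which determines the quadratic differential up to scaling and hence the disk"), and deduces assertion (ii) from assertion (i) together with the uniqueness of the $f$-invariant Teichm\"uller geodesic, citing \cite{bers} and \cite{iva-mcg} for the latter where you instead sketch it via the fixed-point set $\{F_{\pm}\}$ on $\cal{PMF}$. No gaps beyond the standard facts both you and the paper outsource to the literature.
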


In the last theorem, assertion (i) is proved in Lemma 3.1 of \cite{marden-masur}, and assertion (ii) follows from assertion (i) and the fact that for each pseudo-Anosov element $f$, there exists exactly one geodesic in $\cal{T}$ stabilized by $f$. 
This is proved by combining results in Sections 6 and 9 of \cite{bers} and Theorem 7.3.A of \cite{iva-mcg}. 
Using these theorems, we obtain the following:

\begin{lem}\label{lem-teich}
Let $\Gamma$ be a finite index subgroup of $\mod^*(S)$, and pick a pseudo-Anosov element $f\in \mod(S)$. 
Let $D$ be the Teichm\"uller disk with $f\in \stab(D)$, and set $A=\stab(D)\cap \Gamma$. 
Then the equality ${\rm LQN}_{\Gamma}(A)=A$ holds and the group $\gamma A\gamma^{-1}\cap A$ is amenable for each $\gamma \in \Gamma \setminus A$.
\end{lem}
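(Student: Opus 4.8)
The plan is to derive both assertions from the uniqueness properties of Teichm\"uller disks in Theorem \ref{thm-disk2} together with the classification of elements of $\stab(D)$ in Theorem \ref{thm-disk1}, rather than from any measure-theoretic input. The basic bookkeeping I would set up first is that for $\gamma \in \Gamma$ one has $\gamma \stab(D)\gamma^{-1}=\stab(\gamma D)$ and $\gamma \Gamma \gamma^{-1}=\Gamma$, so that $\gamma A\gamma^{-1}\cap A=\stab(D)\cap \stab(\gamma D)\cap \Gamma$; and that if $\gamma \in \Gamma \setminus A$ then $\gamma \notin \stab(D)$, whence $\gamma D\neq D$, so by Theorem \ref{thm-disk2} (i) the disks $D$ and $\gamma D$ are distinct. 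I would also record at the outset that since $[\mod^*(S):\Gamma]<\infty$, some power $f^n$ lies in $\Gamma$, and as $f\in \stab(D)$ this gives a pseudo-Anosov element $f^n\in \stab(D)\cap \Gamma=A$; in particular $A$ is infinite and contains the infinite cyclic group $\langle f^n\rangle$.

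To prove ${\rm LQN}_{\Gamma}(A)=A$, the inclusion $A\subset {\rm LQN}_{\Gamma}(A)$ is immediate, so I would take $\gamma \in {\rm LQN}_{\Gamma}(A)$ and show $\gamma \in A$. By definition $\gamma A\gamma^{-1}\cap A$ has finite index in $A$, hence its intersection with $\langle f^n\rangle$ has finite index in $\langle f^n\rangle$ and is therefore nontrivial; thus some power $f^m$ with $m\geq 1$ lies in $\gamma A\gamma^{-1}\cap A$, and in particular $f^m\in \gamma A\gamma^{-1}\subset \stab(\gamma D)$. As $f^m$ is again pseudo-Anosov and also lies in $\stab(D)$, the uniqueness in Theorem \ref{thm-disk2} (ii) forces $D=\gamma D$, so $\gamma \in \stab(D)\cap \Gamma=A$, as desired.

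For amenability of $\gamma A\gamma^{-1}\cap A$ with $\gamma \in \Gamma \setminus A$, I would first observe that $\stab(D)\cap \stab(\gamma D)$ contains no pseudo-Anosov element: such an element would lie in the stabilizers of the two distinct disks $D$ and $\gamma D$, contradicting Theorem \ref{thm-disk2} (ii). Now set $K=\gamma A\gamma^{-1}\cap A\cap \mod(S)$. Every element of $K$ lies in $\stab(D)\cap \mod(S)$ and is not pseudo-Anosov, so by Theorem \ref{thm-disk1} (iii) its image under $\eta$ is not hyperbolic. Hence $\eta(K)$ is a subgroup of the discrete group $\eta(\stab(D))\subset PGL(2,\mathbb{R})$ all of whose elements are elliptic, parabolic, or the identity. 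A discrete subgroup of $PGL(2,\mathbb{R})$ with no hyperbolic element is elementary, hence virtually abelian and amenable; since $\ker \eta$ is finite, $K$ itself is amenable. Finally, because $\mod(S)$ has index two in $\mod^*(S)$, the subgroup $K$ has index at most two in $\gamma A\gamma^{-1}\cap A$, so the latter is amenable as well.

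The step that I expect to require the most care is the Fuchsian-group input that a discrete subgroup of $PGL(2,\mathbb{R})$ without hyperbolic elements is elementary, and therefore amenable; this relies on the standard fact that a non-elementary Fuchsian group contains two hyperbolic elements generating a free group of rank two. The remaining ingredients---the conjugation identity $\gamma \stab(D)\gamma^{-1}=\stab(\gamma D)$, the extraction of a pseudo-Anosov power inside a finite-index subgroup, and the passage from $\mod(S)$ to $\mod^*(S)$ through an index-two subgroup---are routine and I would treat them briefly.
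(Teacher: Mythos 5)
Your proposal is correct and follows essentially the same route as the paper's proof: the equality ${\rm LQN}_{\Gamma}(A)=A$ is deduced by pushing a power of $f$ into $\stab(D)\cap\stab(\gamma D)$ and invoking the uniqueness of the Teichm\"uller disk of a pseudo-Anosov element (Theorem \ref{thm-disk2} (ii)), and amenability of $\gamma A\gamma^{-1}\cap A$ is deduced from the absence of hyperbolic elements in its $\eta$-image together with the fact that a discrete subgroup of $PGL(2,\mathbb{R})$ without hyperbolic elements is elementary (the paper cites Theorem 2.4.4 of \cite{katok} for this). You merely make explicit a few steps the paper leaves implicit, such as the finiteness of $\ker\eta$ and the passage from $\mod(S)$ to $\mod^*(S)$ via a finite-index subgroup.
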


\begin{proof}
Pick $\gamma \in \Gamma \setminus A$ and put $B=\gamma A\gamma^{-1}\cap A$.
The group $B$ is equal to $\stab(\gamma D)\cap \stab(D)$.
If the index $[A :B]$ were finite, then for some positive integer $n$, $f^n$ would lie in $B$. 
Since $\gamma D\neq D$, this contradicts Theorem \ref{thm-disk2} (ii). 
We thus obtain the equality ${\rm LQN}_{\Gamma}(A)=A$. 
Theorem \ref{thm-disk1} and Theorem \ref{thm-disk2} (ii) imply that each element of $B\cap \mod(S)$ acts on $D$ as an elliptic or parabolic isometry. 
It then follows from Theorem 2.4.4 of \cite{katok} that $\eta(B\cap \mod(S))$ is an elementary subgroup of $PSL(2, \mathbb{R})$ and is thus amenable.
\end{proof}

\subsection{Concluding remarks}\label{subsec-rem}

If one can find either
\begin{enumerate}
\item[(1)] an infinite, ICC, proper and almost malnormal subgroup $A$ of $\mod^*(S)$ when $3g+p\geq 5$ and $(g, p)\neq (1, 2), (2, 0)$; or
\item[(2)] an ICC and proper subgroup $A$ of $\mod^*(S)$ such that we have the equality ${\rm LQN}_{\mod^*(S)}(A)=A$ and the group $\gamma A\gamma^{-1}\cap A$ is amenable for each $\gamma \in \mod^*(S)\setminus A$ when $3g+p\geq 7$ and $(g, p)\neq (2, 1)$,
\end{enumerate}
then the following rigidity result is obtained as a consequence of Theorem 7.10 of \cite{kida-ama} and Theorem \ref{thm-coup-rigid}: For each finite index subgroup $\Delta$ of $\mod^*(S)$, the amalgamated free product $\Gamma =\Delta \ast_{A\cap \Delta}\Delta$ is coupling rigid with respect to the abstract commensurator of $\Gamma$. 
In particular, $\Gamma$ is {\it ME rigid}, that is, any discrete group ME to $\Gamma$ is virtually isomorphic to $\Gamma$. 
We can also obtain a rigidity property of every ergodic f.f.m.p.\ action of $\Gamma$.

It however seems difficult to find a subgroup $A$ of $\mod^*(S)$ satisfying the above condition (1) or (2). 
The following proposition shows that it is impossible to find an $A$ satisfying condition (1) in the class of subgroups of $\mod^*(S)$ containing a reducible element of infinite order. 
We say that an element $f\in \mod^*(S)$ is {\it reducible} if $f$ fixes an element of $\Sigma(S)$.

\begin{prop}
Let $S$ be a surface with $3g+p\geq 5$, and let $\Delta$ be a subgroup of $\mod^*(S)$ containing a reducible element of infinite order. 
If $\Delta$ is almost malnormal in $\mod^*(S)$, then the equality $\Delta =\mod^*(S)$ holds.
\end{prop}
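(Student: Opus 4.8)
The plan is to bootstrap from the given reducible element all the way up to the whole group, using the conjugation formula $ft_\delta f^{-1}=t_{f\delta}$ for Dehn twists together with almost malnormality applied repeatedly. First I would extract a single Dehn twist lying in $\Delta$. Let $f\in \Delta$ be a reducible element of infinite order, so that $f$ fixes some $\sigma \in \Sigma(S)$. After replacing $f$ by a suitable power I may assume $f$ is orientation-preserving and fixes each curve in $\sigma$; this power still has infinite order. Fix $\alpha \in \sigma$. Since $f$ is orientation-preserving with $f\alpha =\alpha$, the formula $ft_\alpha f^{-1}=t_{f\alpha}=t_\alpha$ shows that $t_\alpha$ commutes with $f$. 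Hence $t_\alpha f^k t_\alpha^{-1}=f^k\in \Delta$ for every $k$, so the infinite cyclic group $\langle f\rangle$ is contained in $t_\alpha \Delta t_\alpha^{-1}\cap \Delta$. Almost malnormality of $\Delta$ then forces $t_\alpha \in \Delta$.

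Next I would promote this to $\stab(\alpha)\subseteq \Delta$, where $\stab(\alpha)$ denotes the stabilizer of the isotopy class $\alpha$ in $\mod^*(S)$. For any $h\in \stab(\alpha)$ the twist formula gives $ht_\alpha h^{-1}=t_\alpha^{\pm 1}$, the sign depending only on whether $h$ preserves orientation; in either case $ht_\alpha h^{-1}\in \Delta$. Since also $ht_\alpha h^{-1}\in h\Delta h^{-1}$, the infinite-order element $t_\alpha^{\pm 1}$ lies in $h\Delta h^{-1}\cap \Delta$, and almost malnormality yields $h\in \Delta$. Thus $\stab(\alpha)\subseteq \Delta$.

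Now I would propagate this inclusion to every curve. If $\beta \in V(S)$ satisfies $I(\alpha,\beta)=0$ and $\beta \neq \alpha$, then $t_\beta$ fixes $\alpha$, so $t_\beta \in \stab(\alpha)\subseteq \Delta$; repeating the previous paragraph with $\beta$ in place of $\alpha$ gives $\stab(\beta)\subseteq \Delta$. Because $3g+p\geq 5$, the graph on $V(S)$ in which two curves are adjacent when their geometric intersection number vanishes is connected (it is the $1$-skeleton of the curve complex, which is connected precisely in this range), so any $\gamma \in V(S)$ is joined to $\alpha$ by a finite chain of pairwise disjoint curves. Walking along such a chain and applying the above step repeatedly shows $\stab(\gamma)\subseteq \Delta$ for every $\gamma \in V(S)$.

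Finally I would conclude by contradiction. Suppose $\Delta \neq \mod^*(S)$ and pick $g\in \mod^*(S)\setminus \Delta$ together with any $\gamma \in V(S)$. Then $g\stab(\gamma)g^{-1}=\stab(g\gamma)\subseteq \Delta$ by the previous step, while trivially $g\stab(\gamma)g^{-1}\subseteq g\Delta g^{-1}$. Hence the infinite group $g\stab(\gamma)g^{-1}$ is contained in $g\Delta g^{-1}\cap \Delta$, contradicting almost malnormality. Therefore $\Delta =\mod^*(S)$. The main obstacle in this scheme is the very first step, where the reducibility hypothesis must be converted into an honest Dehn twist inside $\Delta$: this is exactly where an infinite-order reducible element is needed, since it produces a nontrivial element commuting with a twist and thus an infinite intersection $t_\alpha \Delta t_\alpha^{-1}\cap \Delta$. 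The remainder is a propagation argument whose only external input is connectivity of the curve complex, together with careful bookkeeping of the orientation signs in the twist conjugation formula.
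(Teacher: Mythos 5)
Your proof is correct, and its first half is exactly the paper's: extract a power $f^n\in\Delta$ commuting with a Dehn twist $t_\alpha$, note that $\langle f^n\rangle\subset t_\alpha\Delta t_\alpha^{-1}\cap\Delta$ is infinite, and invoke almost malnormality to get $t_\alpha\in\Delta$. Where you diverge is in the propagation. The paper pushes the twist along an explicit chain $\alpha_0,\alpha_1,\dots,\alpha_n$ of consecutively disjoint curves whose twists generate $\pmod(S)$ (reusing the chains from its Proposition on Class $\mathscr{F}$), concludes $\pmod(S)\subset\Delta$, and then gets $\Delta=\mod^*(S)$ from almost malnormality applied to the infinite normal subgroup $\pmod(S)$; the genus-zero case needs a separate argument with half twists. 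You instead promote $t_\alpha\in\Delta$ to $\stab(\alpha)\subset\Delta$ (via $ht_\alpha h^{-1}=t_\alpha^{\pm1}$ and almost malnormality again), propagate curve stabilizers along the disjointness graph on $V(S)$, which is connected since $3g+p\geq 5$, and finish by observing that $g\stab(\gamma)g^{-1}=\stab(g\gamma)\subset\Delta\cap g\Delta g^{-1}$ for any $g$. This buys you a uniform treatment of all genera (no half-twist case split and no explicit generating sets), at the cost of importing connectivity of the curve complex; the paper's route is more self-contained in that it only reuses the generating chains it has already constructed. Two small wording points: your chains should have \emph{consecutive} curves disjoint rather than being pairwise disjoint, and the conjugation formula for orientation-reversing $h$ is $ht_\alpha h^{-1}=t_{h\alpha}^{-1}$, which is exactly the sign you account for.
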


\begin{proof}
Recall that we have the equality
\[ft_{\alpha}f^{-1}=t_{f\alpha}, \quad \forall f\in \mod(S),\ \forall \alpha \in V(S),\]
where $t_{\alpha}\in \mod(S)$ denotes the Dehn twist about $\alpha \in V(S)$ (see Lemma 4.1.C in \cite{iva-mcg}). 
Let $f\in \Delta$ be a reducible element of infinite order and choose $\alpha_0\in V(S)$ and a positive integer $n$ with $f^n\alpha_0=\alpha_0$ and $f^n\in \mod(S)$. 
The above equality shows that $t_{\alpha_0}$ and $f^n$ commute. 
It follows that $t_{\alpha_0}$ belongs to $\Delta$ since $\Delta$ is almost malnormal in $\mod^*(S)$.

As in the proof of Proposition \ref{prop-c-f-mcg}, if $g\geq 1$, then we can find elements $\alpha_1,\ldots, \alpha_n$ of $V(S)$ such that
\begin{itemize}
\item $I(\alpha_i, \alpha_{i+1})=0$ for each $i=0, 1,\ldots, n-1$; and
\item $t_{\alpha_0}, t_{\alpha_1},\ldots, t_{\alpha_n}$ generate $\pmod(S)$.
\end{itemize}
Since $t_{\alpha_i}$ and $t_{\alpha_{i+1}}$ commute for each $i$, we can conclude that $\Delta$ contains $\pmod(S)$.
The equality $\Delta =\mod^*(S)$ then follows.

A similar argument can be applied to proving the equality $\Delta =\mod^*(S)$ in the case of $g=0$.
\end{proof}

The following results on Teichm\"uller disks show us difficulty of finding a subgroup $A$ of $\mod^*(S)$ satisfying condition (2). 
As discussed in Section 1.8 of \cite{masur-tab}, there is a one-to-one correspondence between integrable meromorphic quadratic differentials on $S$ and flat structures on $S$ with cone singularities. 
The latter viewpoint is often helpful in understanding the group $\stab(D)$ for a Teichm\"uller disk $D$.
Among other things, examples of $D$ with $\eta(\stab(D))$ a lattice in $PGL(2, \mathbb{R})$ are found in \cite{veech} (see also Section 5 in \cite{masur-tab}). 
Notice that any lattice in $PGL(2, \mathbb{R})$ is ICC. 
Many such examples of $D$ are obtained by using billiards in rational polygons. 
We refer to Section 1 in \cite{masur-tab} for construction of the flat structure on $S$ associated to the billiard in a rational polygon. 
Proposition 3 in \cite{gutkin} however shows that if $D$ is the Teichm\"uller disk associated to such a billiard, then $\ker \eta$ is non-trivial.
It follows that $\stab(D)$ cannot be ICC since $\ker \eta$ is finite.



\begin{thebibliography}{99}

\bibitem{adams-ame}S. Adams, Boundary amenability for word hyperbolic groups and an application to smooth dynamics of simple groups, {\it Topology} {\bf 33} (1994), 765--783.

\bibitem{adams}S. Adams, Indecomposability of equivalence relations generated by word hyperbolic groups, {\it Topology} {\bf 33} (1994), 785--798.

\bibitem{aeg}S. Adams, G. A. Elliott and T. Giordano, Amenable actions of groups, {\it Trans. Amer. Math. Soc.} {\bf 344} (1994), 803--822.

\bibitem{adams-spa}S. Adams and R. Spatzier, Kazhdan groups, cocycles and trees, {\it Amer. J. Math.} {\bf 112} (1990), 271--287.

\bibitem{ar-book}C. Anantharaman-Delaroche and J. Renault, {\it Amenable groupoids}, Monogr. Enseign. Math., 36. Enseignement Math., Geneva, 2000.  

\bibitem{bers}L. Bers, An extremal problem for quasiconformal mappings and a theorem by Thurston, {\it Acta Math.} {\bf 141} (1978), 73--98.

\bibitem{birman}J. S. Birman, {\it Braids, links, and mapping class groups}, Ann. of Math. Stud., 82, Princeton Univ.\ Press, Princeton, N.J., 1974.

\bibitem{bow-rel-hyp}B. H. Bowditch, Relatively hyperbolic groups, preprint, 1999.

\bibitem{bow-tight}B. H. Bowditch, Tight geodesics in the curve complex, {\it Invent. Math.} {\bf 171} (2008), 281--300.

\bibitem{cfw}A. Connes, J. Feldman and B. Weiss, An amenable equivalence relation is generated by a single transformation, {\it Ergodic Theory Dynam. Systems} {\bf 1} (1981), 431--450 (1982).

\bibitem{earle-gardiner}C. J. Earle and F. P. Gardiner, Teichm\"uller disks and Veech's $\cal{F}$-structures, in {\it Extremal Riemann surfaces} (San Francisco, CA, 1995), 165--189, Contemp. Math., 201, Amer. Math. Soc., Providence, RI, 1997.

\bibitem{fsz}J. Feldman, C. E. Sutherland and R. J. Zimmer, Subrelations of ergodic equivalence relations, {\it Ergodic Theory Dynam. Systems} {\bf 9} (1989), 239--269.

\bibitem{furman-mer}A. Furman, Gromov's measure equivalence and rigidity of higher rank lattices, {\it Ann. of Math. (2)} {\bf 150} (1999), 1059--1081.

\bibitem{furman-survey}A. Furman, A survey of measured group theory, in {\it Geometry, rigidity, and group actions}, 296--374, Univ. Chicago Press, Chicago and London, 2011.

\bibitem{gab}D. Gaboriau, Orbit equivalence and measured group theory,
in {\it Proceedings of the International Congress of Mathematicians (Hyderabad, India, 2010), Vol. III}, 1501-1527, Hindustan Book Agency, 2010. 

\bibitem{gardiner}F. P. Gardiner, {\it Teichm\"uller theory and quadratic differentials}, Pure Appl. Math. (N. Y.), A Wiley-Interscience Publication. John Wiley \& Sons, Inc., New York, 1987.

\bibitem{gervais}S. Gervais, A finite presentation of the mapping class group of a punctured surface, {\it Topology} {\bf 40} (2001), 703--725.

\bibitem{gromov-as-inv}M. Gromov, Asymptotic invariants of infinite groups, in {\it Geometric group theory, Vol.\ 2} (Sussex, 1991), 1--295, London Math.\ Soc.\ Lecture Note Ser., 182, Cambridge Univ. Press, Cambridge, 1993.

\bibitem{gutkin}E. Gutkin, Billiards on almost integrable polyhedral surfaces, {\it Ergodic Theory Dynam. Systems} {\bf 4} (1984), 569--584.

\bibitem{hamidi}H. Hamidi-Tehrani, Groups generated by positive multi-twists and the fake lantern problem, {\it Algebr. Geom. Topol.} {\bf 2} (2002), 1155--1178.

\bibitem{iva-subgr}N. V. Ivanov, {\it Subgroups of Teichm\"uller modular groups}, Transl. of Math. Monogr., 115. Amer. Math. Soc., Providence, RI, 1992.

\bibitem{iva-mcg}N. V. Ivanov, Mapping class groups, in {\it Handbook of geometric topology}, 523--633, North-Holland, Amsterdam, 2002.

\bibitem{katok}S. Katok, {\it Fuchsian groups}, Chicago Lectures in Math., University of Chicago Press, Chicago, IL, 1992.

\bibitem{kechris}A. S. Kechris, {\it Classical descriptive set theory}, Grad. Texts in Math., 156. Springer-Verlag, New York, 1995.

\bibitem{kida-oer}Y. Kida, Orbit equivalence rigidity for ergodic actions of the mapping class group, {\it Geom. Dedicata} {\bf 131} (2008), 99--109.

\bibitem{kida-mcg}Y. Kida, The mapping class group from the viewpoint of measure equivalence theory, {\it Mem. Amer. Math. Soc.} {\bf 196} (2008), no. 916.

\bibitem{kida-survey}Y. Kida, Introduction to measurable rigidity of mapping class groups, in {\it Handbook of Teichm\"uller theory, Vol. II}, 297--367, IRMA Lect. Math. Theor. Phys., 13, Eur. Math. Soc., Z\"urich, 2009.

\bibitem{kida-mer}Y. Kida, Measure equivalence rigidity of the mapping class group, {\it Ann. of Math. (2)} {\bf 171} (2010), 1851--1901.

\bibitem{kida-ama}Y. Kida, Rigidity of amalgamated free products in measure equivalence, {\it J. Topol.} {\bf 4} (2011), 687--735.

\bibitem{kra}I. Kra, On the Nielsen-Thurston-Bers type of some self-maps of Riemann surfaces, {\it Acta Math.} {\bf 146} (1981), 231--270.

\bibitem{marden-masur}A. Marden and H. Masur, A foliation of Teichm\"uller space by twist invariant disks, {\it Math. Scand.} {\bf 36} (1975), 211--228.

\bibitem{masur-tab}H. Masur and S. Tabachnikov, Rational billiards and flat structures, in {\it Handbook of dynamical systems, Vol. 1A}, 1015--1089, North-Holland, Amsterdam, 2002.

\bibitem{ow}D. S. Ornstein and B. Weiss, Ergodic theory of amenable group actions. I. The Rohlin lemma, {\it Bull. Amer. Math. Soc. (N.S.)} {\bf 2} (1980), 161--164.

\bibitem{paris}L. Paris, Actions and irreducible representations of the mapping class group, {\it Math. Ann.} {\bf 322} (2002), 301--315.

\bibitem{popa}S. Popa, Deformation and rigidity for group actions and von Neumann algebras, in {\it International Congress of Mathematicians. Vol. I},  445--477, Eur. Math. Soc., Z\"urich, 2007.

\bibitem{serre}J.-P.\ Serre, {\it Trees}, Springer Monogr. Math., Springer-Verlag, Berlin, 2003.

\bibitem{shalom-survey}Y. Shalom, Measurable group theory, in {\it European Congress of Mathematics}, 391--423, Eur. Math. Soc., Z\"urich, 2005.

\bibitem{take3}M. Takesaki, {\it Theory of operator algebras. III}, Encyclopaedia Math. Sci., 127. Operator Algebras and Non-commutative Geometry, 8. Springer-Verlag, Berlin, 2003.

\bibitem{vaes}S. Vaes, Rigidity for von Neumann algebras and their invariants, in {\it Proceedings of the International Congress of Mathematicians (Hyderabad, India, 2010), Vol. III}, 1624--1650, Hindustan Book Agency, 2010.

\bibitem{veech}W. A. Veech, Teichm\"uller curves in moduli space, Eisenstein series and an application to triangular billiards, {\it Invent. Math.} {\bf 97} (1989), 553--583.

\bibitem{zim-ame-action}R. J. Zimmer, Amenable ergodic group actions and an application to Poisson boundaries of random walks, {\it J. Funct. Anal.} {\bf 27} (1978), 350--372.


\end{thebibliography}
\end{document}